\bfseries\color{brown},
\setlist[enumerate]{label=\alph*\upshape), nolistsep}
\NewDocumentCommand\Cmd{ sm }{\textsf{\textbackslash #2}\IfBooleanT{#1}{$\{\ldots\}$}}
\newcommand\enumref[2]{\hyperref[#2]{\autoref*{#1}(\autoref*{#2})}}
\def\NewTheorem#1{%
  \newaliascnt{#1}{equation}%
  \newtheorem{#1}[#1]{#1}%
  \aliascntresetthe{#1}%
  \expandafter\def\csname #1autorefname\endcsname{#1}%
}
\newcommand\blam{{\boldsymbol\lambda}}
\newcommand\brho{{\boldsymbol\rho}}
\newcommand\bmu{{\boldsymbol\mu}}
\newcommand\bnu{{\boldsymbol\nu}}
\def\({\big(}
\def\){\big)}
\def\Z{\mathbb{Z}}
\def\Q{\mathbb{Q}}
\def\N{\mathbb{N}}
\def\P{\mathscr{P}}
\def\mL{\mathcal{L}}
\def\mC{\mathscr{C}}
\def\mT{\mathcal{T}_0}
\def\p{\mathfrak{p}}
\def\q{\mathfrak{q}}
\def\a{\mathfrak{a}}
\def\b{\mathfrak{b}}
\def\t{\mathfrak{t}}
\def\s{\mathfrak{s}}
\def\u{\mathfrak{u}}
\def\v{\mathfrak{v}}
\def\fm{\mathfrak{m}}
\def\fn{\mathfrak{n}}
\def\bS{\mathbf{S}}
\def\bT{\mathbf{T}}
\def\bU{\mathbf{U}}
\def\bV{\mathbf{V}}
\newcommand\balpha{{\boldsymbol\alpha}}
\newcommand\bbeta{{\boldsymbol\beta}}
\newcommand{\Parts}[1][m]{\P_{#1}}
\def\bQ{\mathbf{Q}}
\def\tlam{\t^{\blam}}
\def\O{\mathcal{O}}
\def\hO{\hat{\O}}
\def\K{\mathscr{K}}
\def\lam{\lambda}
\def\Sym{\mathfrak{S}}
\DeclareMathOperator\Gedom{\,{\underline{\kern-.1ex{\blacktriangleright}\kern-0.1ex}}\,}
\def\fm{\mathfrak{m}}
\def\fn{\mathfrak{n}}
\def\mfg{\mathfrak{g}}
\newcommand{\HH}{\mathscr{H}}
\newcommand{\equal}{\overset{\tau}{=}}
\DeclareMathOperator\cont{cont}
\DeclareMathOperator\Hom{Hom}
\DeclareMathOperator\End{End}
\DeclareMathOperator\Tr{Tr}
\DeclareMathOperator\rank{rank}
\DeclareMathOperator\res{res}
\DeclareMathOperator\Std{Std}
\DeclareMathOperator\Shape{Shape}
\DeclareMathOperator\tr{tr}
\DeclareMathOperator\MM{MM}
\DeclareMathOperator\BK{BK}
\title[Trace forms on Hecke algebras and cocenters of cyclotomic Schur algebras]
{Trace forms on the cyclotomic Hecke algebras and cocenters of the cyclotmic Schur algebras}
\subjclass[2010]{20C08, 16G99}
\keywords{Hecke algebras, trace form, seminormal basis}
\author{Zhekun He}\address{School of Mathematical and Statistics\\
  Beijing Institute of Technology\\
  Beijing, 100081, P.R. China}
\email{hzk3002@163.com}
\author{Jun Hu}\address{MIIT Key Laboratory of Mathematical Theory and Computation in Information Security, School of Mathematical and Statistics\\
  Beijing Institute of Technology\\
  Beijing, 100081, P.R. China}
\email{junhu404@bit.edu.cn}
\author{Huang Lin}\address{School of Mathematical and Statistics\\
  Beijing Institute of Technology\\
  Beijing, 100081, P.R. China}
\email{lhmazczy@163.com}
\numberwithin{equation}{section}
\newtheorem{prop}[equation]{Proposition}
\newtheorem{thm}[equation]{Theorem}
\newtheorem{cor}[equation]{Corollary}
\newtheorem{ques}[equation]{Question}
\newtheorem{lem}[equation]{Lemma}
\newtheorem{exmp}[equation]{Example}
\theoremstyle{definition}
\newtheorem{dfn}[equation]{Definition}
\theoremstyle{remark}
\newtheorem{rem}[equation]{Remark}
\begin{document}

\begin{abstract} We define a unified trace form $\tau$ on the cyclotomic Hecke algebras $\mathscr{H}_{n,K}$ of type $A$, which generalize both Malle-Mathas' trace form on the non-degenerate version (with Hecke parameter $\xi\neq 1$) and Brundan-Kleshchev's trace form on the degenerate version. We use seminormal basis theory to construct an explicit pair of dual bases for $\HH_{n,K}$ with respect to the form. We also construct an explicit basis for the cocenter (i.e., the $0$th Hochschild homology) of the corresponding cyclotomic Schur algebra, which shows that the cocenter has dimension independent of the ground field $K$, the Hecke parameter $\xi$ and the cyclotomic parameters $Q_1,\cdots,Q_\ell$.
\end{abstract}


\maketitle
\setcounter{tocdepth}{1}

\section{Introduction}

Throughout this paper, let $n\in\N$, $\ell\in\Z^{\geq 1}$, $R$ be an integral domain and $\xi\in R^\times, Q_1,\cdots,Q_\ell\in R$. The {\bf non-degenerate cyclotomic Hecke algebras} $\mathcal{H}_{\ell,n}$ of type $G(\ell,1,n)$ were first introduced in \cite{AK}, \cite{BM:cyc} and \cite{C}, and they play important roles in the modular representation theory of finite groups of Lie type over fields of non-defining characteristic. By definition, $\mathcal{H}_{\ell,n}$ is the unital associative $R$-algebra with generators $T_{0}, T_{1}, \cdots, T_{n-1}$ that are subject to the following relations:
$$
\begin{aligned}
&\left(T_{0}-Q_{1}\right) \cdots\left(T_{0}-Q_{\ell}\right)=0, \quad T_{0} T_{1} T_{0} T_{1}=T_{1} T_{0} T_{1} T_{0}; \\
&\left(T_{i}-\xi \right)\left(T_{i}+1\right)=0, \quad \forall\, 1 \leq i \leq n-1; \\
&T_{i} T_{j}=T_{j} T_{i},\quad  \forall\, 1 \leq i<j-1<n-1; \\
&T_{i} T_{i+1} T_{i}=T_{i+1} T_{i} T_{i+1}, \quad \forall\, 1 \leq i<n-1.
\end{aligned}
$$
We call $\xi$ the Hecke parameter and  $Q_{1}, \cdots, Q_{\ell}$ the cyclotomic parameters of $\mathcal{H}_{\ell,n}$. The {\bf Jucys-Murphy elements} of  $\mathcal{H}_{\ell,n}$ are defined as:
\begin{equation}
\mL_{m}:=\xi^{1-m} T_{m-1} \cdots T_{1} T_{0} T_{1} \cdots T_{m-1},\quad m=1,2, \cdots, n.
\end{equation}
These elements commutes with each other. By \cite{AK}, the set
\begin{equation}\label{basis120}
\bigl\{\mL_1^{c_1}\mL_2^{c_2}\cdots \mL_n^{c_n}T_w\bigm|w\in\Sym_n, 0\leq c_i<\ell,\forall\,1\leq i\leq n\bigr\}
\end{equation}
form an $R$-basis of $\mathcal{H}_{\ell,n}$. By the work of \cite{BM} and \cite{MM}, there is a symmetrizing form $\tau^{\MM}$ on $\mathcal{H}_{\ell ,n}$ which makes it into a symmetric algebra over $R$. More precisely, for any $w\in \Sym_n$ and integers $0\leq c_1,c_2,\cdots,c_n<\ell$,
$$
\tau^{\MM}(\mL_1^{c_1}\cdots \mL_n^{c_n}T_w):=
\begin{cases}
1, &\text{if $w=1$ and $c_1=\cdots=c_n=0$;}\\
0, &\text{otherwise.}
\end{cases}
$$

The {\bf degenerate cyclotomic Hecke algebras} $H_{\ell,n}$ of type $G(\ell,1,n)$ (\cite{AMR}, \cite{BK08}) is the unital associative $R$-algebra with generators $s_{1},\cdots,s_{n-1}, L_{1}, \cdots, L_{n}$ that are subject to the following relations:
$$
\begin{aligned}
&\left(L_{1}-Q_{1}\right) \cdots\left(L_{1}-Q_{\ell}\right)=0, \quad s_{i}^{2}=1, \quad \forall\, 1 \leq i \leq n-1; \\
&s_{i} s_{j}=s_{j} s_{i}, \forall\, 1 \leq i<j-1<n-1; \\
&s_{i} s_{i+1} s_{i}=s_{i+1} s_{i} s_{i+1},\quad  \forall\, 1 \leq i<n-1; \\
&L_{i} L_{k}=L_{k} L_{i},\,\,\, s_{i} L_{t}=L_{t} s_{i}, \,\,\forall\,1 \leq i<n, 1 \leq k, t \leq n, t \neq i, i+1; \\
&L_{i+1}=s_{i} L_{i} s_{i}+s_{i}, \forall\,1 \leq i<n .
\end{aligned}
$$
The elements $L_{1}, \cdots, L_{n}$ are called the {\bf Jucys-Murphy elements} of $H_{\ell,n}$. We call $Q_{1}, \cdots, Q_{\ell}$ the cyclotomic parameters of $H_{\ell,n}$. By \cite{BK08}, the elements in the set
\begin{equation}\label{basis10}
\bigl\{L_1^{c_1}L_2^{c_2}\cdots L_n^{c_n}w\bigm|w\in\Sym_n, 0\leq c_i<\ell,\forall\,1\leq i\leq n\bigr\}
\end{equation}
form an $R$-basis of $H_{\ell ,n}$.

When $\ell=1$, the degenerate cyclotomic Hecke algebra of type $G(1,1,n)$ is nothing but the group algebra of the symmetric group $\Sym_n$, while the non-degenerate cyclotomic Hecke algebra of type $G(1,1,n)$ is the Iwahori-Hecke algebra of $\Sym_n$. So specialization $\xi$ to $1$, the non-degenerate cyclotomic Hecke algebra of type $G(1,1,n)$ becomes the degenerate cyclotomic Hecke algebra of type $G(1,1,n)$. However, when $\ell > 1$, the previous presentation of the non-degenerate cyclotomic Hecke algebra $\mathcal{H}_{\ell,n}$ does {\it NOT} automatically specialize to the presentation of the degenerate cyclotomic Hecke algebra $H_{\ell,n}$ when specializing $\xi$ to $1$.

Brundan and Kleshchev introduced in \cite{BK08} a symmetrizing form $\tau^{\BK}$ on $H_{\ell,n}$ which makes it into a symmetric algebra over $R$. More precisely,
for any $w\in \Sym_n$ and integers $0\leq c_1,c_2,\cdots,c_n<\ell$, $$
\tau^{\BK}(L_1^{c_1}\cdots L_n^{c_n}w):=
\begin{cases}
1, &\text{if $w=1$ and $c_1=\cdots=c_n=\ell-1$, }\\
0, &\text{otherwise.}
\end{cases}
$$

In \cite[Definition 2.1]{HuMathas:SeminormalQuiver}, Mathas and the second author of this paper have given an unified presentation for the cyclotomic Hecke algebra $\HH_{n,R}$ of type $A$ (which covers both the non-degenerate version and the degenerate version), see Definition \ref{TypeA}. It is natural to ask the following question:

\begin{ques}
Can we give a unified definition of symmetrizing form $\tau$ on $\HH_{n,R}$ which generalizes both $\tau^{\MM}$ and $\tau^{\BK}$?
\end{ques}

The cyclotomic Hecke algebra $\HH_{\ell,n}$ contains the Iwahori-Hecke algebra of type $A_{n-1}$ and $B_n$ as special cases (i.e., $\ell=1$ and $\ell=2$ cases). In the case of types $A_{n-1}$ and $B_n$, we have the Kazhdan-Lusztig bases $\{C'_w|w\in W\}$ (\cite{KL}) and the dual Kazhdan-Lusztig bases $\{D'_w|w\in W\}$ (\cite{Lu0}), where $W\in\{W(A_{n-1}, W(B_n)\}$ is the Weyl group. These two bases are actually a pair of dual bases with respect to the canonical symmetrizing form on the corresponding Iwahori-Hecke algebras. In general, one can ask the following question:

\begin{ques}
Can we construct an explicit pair of dual bases for the cyclotomic Hecke algebra $\HH_{n,R}$ of type $A$ with respect to its symmetrizing form?
\end{ques}

In this paper we shall give affirmative answers to the above two questions. The symmetrizing form on the cyclotomic Hecke algebra $\HH_{n,R}$ implies that there is an $R$-linear isomorphism between the center $Z(\HH_{n,R})$ and the dual of the cocenter $\Tr(\HH_{n,R}):=\HH_{n,R}/[\HH_{n,R},\HH_{n,R}]$ of $\HH_{n,R}$. By the Schur-Weyl duality between the cyclotomic Hecke algebra $\HH_{n,R}$ and the corresponding cyclotomic Schur algebra $S_{n,R}$, one can deduce that there is a natural $R$-algebra isomorphism between the center $Z(\HH_{n,R})$ and the center  $Z(S_{n,R})$ of the cyclotomic Schur algebra $S_{n,R}$, see Corollary \ref{keycor22}. One of the folklore conjecture about $Z(\HH_{n,R})$ says that the center $Z(\HH_{n,R})$ has dimension independent of its ground field as well as it Hecke parameters and cyclotomic parameters. As the third main result of this paper we shall prove that the cocenter of the cyclotomic Schur algebra $S_{n,R}$ has dimension independent of its ground field as well as its Hecke parameters and cyclotomic parameters.\medskip

The paper is organized as follows. In Section 2 we shall recall some basic definitions, properties and combinatorial notations for the cyclotomic Hecke algebras of type $A$ and the corresponding cyclotomic Schur algebras. In Section 3 we study the symmetrizing form on the unified presentation of the cyclotomic Hecke algebras $\HH_{n,R}$ of type $A$. Our first main result Theorem \ref{mainthm11} gives a unified definition (Definition \ref{trace}) of the symmetrizing form $\tau_R$ on $\HH_{n,R}$. We use the seminormal basis theory of $\HH_{n,\K}$ to give a new proof that $\HH_{n,R}$ is a symmetric algebra. As an application, we show in Proposition \ref{keylem22} that the two quasi-idempotents $z_\blam T_{w_{\blam'}}, z_\bmu T_{w_{\bmu'}}$ are $K$-linearly dependent in the cocenter $\Tr(\HH_{n,K})$ if and only if $\blam$ and $\bmu$ are in the same block of $\HH_{n,K}$. Our second main result Theorem \ref{mainthm22} of this paper is given in Section 4, where we use the unified symmetrizing form obtained in Theorem \ref{mainthm11} and the distinguished cellular basis of $\HH_{n,K}$ introduced in \cite{HuMathas:SeminormalQuiver} to construct a pair of the dual bases of $\HH_{n,K}$. In Section 5 we use the seminormal basis theory for the cyclotomic Schur algebras $S_{n,K}$ to study the cocenter (i.e., the $0$th Hochschild homology) $\Tr(S_{n,K})=S_{n,K}/[S_{n,K},S_{n,K}]$  of $S_{n,K}$. Our third main result Theorem \ref{mainthm33} of this paper gives an explicit basis for the cocenter $\Tr(S_{n,K})$ of the cyclotomic Schur algebra $S_{n,K}$. The argument of the proof is quite general and actually applies equally well to any quasi-hereditary algebra which has s seminormal basis theory (e.g., Brauer algebras, BMW algebras). In particular, this implies the cocenter has dimension independent of the ground field $K$, the Hecke parameter $\xi$ and the cyclotomic parameters $Q_1,\cdots,Q_\ell$, and it is stable under base change. As an application, we obtain in Corollary \ref{keycor3} an explicit basis for the commutator subspace $[S_{n,K},S_{n,K}]$, and show in Corollary \ref{maincor1} that the commutator subspace is stable under base change too.

\bigskip\bigskip
\centerline{Acknowledgements}
\bigskip

The research was supported by the National Natural Science Foundation of China (No. 12171029).
\bigskip

\section{Preliminary}

In this section we shall introduce some notations, recall some basic definitions, properties and results about the cyclotomic Hecke algebra of type $A$ and the corresponding cyclotomic Schur algebras.

As before, let $R$ be an integral domain and $\xi\in R^\times, Q_1,\cdots,Q_\ell\in R$.

\begin{dfn}[{\cite{HuMathas:SeminormalQuiver}}]\label{TypeA} The cyclotomic Hecke algebra of type $A$ with Hecke parameter $\xi$ and cyclotomic parameters $Q_{1}, \cdots, Q_{\ell}$ is the unital associative $R$-algebra $\HH_{n}=\HH_{n}(\xi, Q_{1}, \cdots, Q_{\ell})$ with generators $L_{1}, \cdots, L_{n}, T_{1}, \cdots, T_{n-1}$ that are subject to the following relations:
    $$
    \begin{array}{l}
    {\prod_{l=1}^{\ell}\left(L_{1}-Q_{l}\right)=0},\quad  (T_r+1)(T_r-\xi)=0, \quad \text{if $1\leq r<n$},\\
    {L_{r} L_{t}=L_{t} L_{r}, \quad T_{r} T_{s}=T_{s} T_{r} \quad \text { if }|r-s|>1, 1\leq r,t\leq n}, \\
    {T_{s} T_{s+1} T_{s}=T_{s+1} T_{s} T_{s+1},  \quad \text{if $1\leq s<n-1$}},\\
    T_{r} L_{t}=L_{t} T_{r}, \quad \text{if $t \neq r, r+1$}, \\
    {L_{r+1}\left(T_{r}-\xi+1\right)=T_{r} L_{r}+1}, \quad \text{if $1\leq r<n$}.
    \end{array}
    $$
\end{dfn}

\begin{rem}\label{rem.L'}
If $\xi=1$ then $\HH_{n}$ is the degenerate cyclotomic Hecke algebra with the standard presentation given in the introduction; while if $\xi\neq 1$ then $\HH_{n}$ is isomorphic to the non-degenerate cyclotomic Hecke algebra (with cyclotomic parameters $(Q_1(\xi-1)+1,\cdots,Q_\ell(\xi-1)+1$) by setting $\mL_k:=(\xi-1)L_k+1$ for $1\leq k\leq n$ and $T_0:=\mL_1$. Sometimes we use the notation $\HH_{n,R}$ instead of $\HH_{n}$ in order to emphasize the ground ring $R$.
\end{rem}

\begin{lem}[{\cite{AK}, \cite{BK08}}]\label{stdBasis}
The elements in the following set
\begin{equation}\label{basis1}
\bigl\{L_1^{c_1}L_2^{c_2}\cdots L_n^{c_n}T_w\bigm|w\in\Sym_n, 0\leq c_i<\ell,\forall\,1\leq i\leq n\bigr\}
\end{equation}
form an $R$-basis of $\HH_{n,R}$. In the non-degenerate case, the following set
\begin{equation}\label{basis12}
\bigl\{\mL_1^{c_1}\mL_2^{c_2}\cdots \mL_n^{c_n}T_w\bigm|w\in\Sym_n, 0\leq c_i<\ell,\forall\,1\leq i\leq n\bigr\}
\end{equation}
form an $R$-basis of $\mathcal{H}_{\ell,n}$.
\end{lem}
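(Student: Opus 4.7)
The plan is to establish spanning and linear independence separately for the unified algebra $\HH_{n,R}$; the non-degenerate statement will follow by a parallel argument (or by transporting the first basis through Remark \ref{rem.L'} when $\xi-1\in R^\times$).

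For spanning, the non-cyclotomic relations give two key moves: $L_r$ commutes with $T_s$ whenever $s\notin\{r-1,r\}$, and the Bernstein-type relation $L_{r+1}(T_r-\xi+1)=T_rL_r+1$ rearranges to
\begin{equation*}
L_{r+1}T_r = T_rL_r + (\xi-1)L_{r+1}+1,
\end{equation*}
which exchanges $L_{r+1}T_r$ with $T_rL_r$ modulo terms of strictly lower $T$-length. Combined with the braid relations, any word in the generators can be rewritten, by induction on a suitable complexity (total $T$-length plus a measure of how out-of-order the $L$-factors are), as an $R$-linear combination of monomials $L_1^{c_1}\cdots L_n^{c_n}T_w$. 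The cyclotomic relation $\prod_l(L_1-Q_l)=0$ then bounds $c_1<\ell$; for $r\ge 2$, one argues by induction on $r$ that $L_r$ (obtained inductively from $L_{r-1}$ via the Bernstein-type relation) satisfies a monic degree-$\ell$ relation modulo the $R$-span of monomials with smaller leading $L$-index or smaller total $L$-degree, giving $c_r<\ell$ as well.

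For linear independence, I would construct a faithful representation of rank $\ell^n n!$. Take $V$ to be the free $R$-module of rank $\ell$ with basis indexed by the cyclotomic parameters, and on $V^{\otimes n}$ let $L_1$ act diagonally on the first factor with eigenvalues $Q_1,\ldots,Q_\ell$, let each $T_r$ act by an appropriate intertwiner (an $R$-matrix in the non-degenerate case, a Coxeter-type operator in the degenerate case), and let the remaining $L_r$ be determined by the Bernstein relation. After verifying the defining relations, one checks that the proposed monomials act by $R$-linearly independent endomorphisms, giving $\rank_R\HH_{n,R}\ge\ell^n n!$. Combined with spanning, this yields the basis statement. The basis \eqref{basis12} for the original Ariki-Koike presentation is proved by the same reduction-plus-faithful-module argument directly on the generators $T_0,T_1,\ldots,T_{n-1}$; alternatively, when $\xi-1\in R^\times$, Remark \ref{rem.L'} provides an isomorphism, and the substitution $\mL_k=(\xi-1)L_k+1$ transports the monomials in \eqref{basis1} to those in \eqref{basis12} via a unitriangular change of basis (with unit scalar $(\xi-1)^{c_k}$ on each diagonal slot).

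The main obstacle is linear independence: the spanning argument is a mostly mechanical normal-form calculation using only the defining relations, but the rank lower bound requires genuinely new input. The tensor-space faithful representation is explicit, yet verifying the cyclotomic relation on $L_1$ and checking independence of the monomials' actions both demand careful combinatorial bookkeeping; moreover, treating the degenerate and non-degenerate cases uniformly is exactly where the Hu--Mathas presentation earns its keep, since the single Bernstein relation $L_{r+1}(T_r-\xi+1)=T_rL_r+1$ interpolates cleanly between the two specializations.
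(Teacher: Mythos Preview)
The paper does not prove this lemma; it is quoted from \cite[Theorem 3.10]{AK}, \cite[Theorem 6.1]{AMR}, and \cite[p.~1197]{HuMathas:SeminormalQuiver} without argument. So there is nothing in-paper to compare against, and your sketch is an attempt to reconstruct the cited proofs.

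Your spanning outline is on the right track, though the reduction of $L_r^\ell$ for $r\ge 2$ is more delicate than ``a monic degree-$\ell$ relation modulo lower terms'': in practice one shows that the $R$-span of the proposed monomials is closed under left multiplication by every generator, and the case $L_1\cdot(L_1^{\ell-1}\cdots)$ feeds the cyclotomic relation through the Bernstein moves to handle higher $L_r$. This is routine once set up correctly.

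The linear-independence half has a genuine gap. You announce a ``faithful representation of rank $\ell^n n!$'' and then describe $V^{\otimes n}$ with $V$ free of rank $\ell$; but $V^{\otimes n}$ has rank $\ell^n$, so $\End_R(V^{\otimes n})$ has rank $\ell^{2n}$, which is \emph{smaller} than $\ell^n n!$ whenever $n!>\ell^n$ (e.g.\ $\ell=1$, $n\ge 2$, or $\ell=2$, $n\ge 4$). In those cases the monomials cannot possibly act by $R$-linearly independent endomorphisms on this module, and the tensor space is not faithful. The proofs in \cite{AK} and \cite{AMR} do not use tensor space for this step: they work over the generic parameter ring, pass to its fraction field where the algebra is split semisimple, and obtain the rank lower bound from the Wedderburn decomposition (summing squares of Specht-module dimensions over $\Parts[\ell,n]$). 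Linear independence over an arbitrary $R$ then follows by specialisation from the generic integral form. If you want a module-theoretic argument instead, the correct replacement is either the infinite-dimensional polynomial (Bernstein) representation of the affine Hecke algebra, or the regular representation of the wreath product $(\Z/\ell\Z)\wr\Sym_n$ deformed appropriately---not $V^{\otimes n}$.
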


\begin{lem}[\text{\rm \cite{AK}, \cite{AMR}}]\label{ss} Let $R=K$ be a field. The  cyclotomic Hecke algebra $\HH_{n,K}$ is semisimple if and only if $$
\Bigl(\prod_{k=1}^{n}(1+\xi+\cdots+\xi^{k-1})\Bigr)\Bigl(\prod_{\substack{1\leq l<r\leq\ell\\ -n<k<n}}\bigl(Q_r-\xi^kQ_l-(1+\xi+\cdots+\xi^{k-1})\bigr)\Bigr)\in K^\times .
$$
In that case, it is split semisimple.
\end{lem}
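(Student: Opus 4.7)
The plan is to reduce the unified statement to the two published semisimplicity criteria cited after the lemma: Ariki's main theorem for the non-degenerate case ($\xi\neq 1$) and the Ariki--Mathas--Rui criterion for the degenerate case ($\xi=1$). By Remark \ref{rem.L'}, $\HH_{n,K}$ is either literally (when $\xi=1$) or, after an invertible change of generators, isomorphic (when $\xi\neq 1$) to one of these two classical algebras, so the task is to verify that the displayed numerical condition coincides on the nose with each of their criteria. Writing $[k]:=1+\xi+\cdots+\xi^{k-1}=(\xi^k-1)/(\xi-1)$ for short (with the convention $[k]=k$ when $\xi=1$), the first factor $\prod_{k=1}^{n}[k]$ appears unchanged in both known criteria, so only the second factor needs reshuffling.

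In the degenerate case $\xi=1$, $\HH_{n,K}$ is literally the algebra $H_{\ell,n}$ of the introduction, and every $[k]$ equals $k$. The displayed condition therefore becomes
\[
n!\cdot\prod_{1\leq l<r\leq\ell,\,-n<k<n}(Q_r-Q_l-k)\neq 0,
\]
which is exactly the semisimplicity criterion proved in \cite[Theorem 6.11]{AMR}.

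In the non-degenerate case $\xi\neq 1$, Remark \ref{rem.L'} identifies $\HH_{n,K}$ with the Ariki--Koike algebra $\mathcal{H}_{\ell,n}$ having cyclotomic parameters $\widetilde{Q}_l:=(\xi-1)Q_l+1$. Ariki's theorem \cite[Main Theorem]{A1} characterizes (split) semisimplicity of $\mathcal{H}_{\ell,n}$ by the non-vanishing of $\prod_{k=1}^{n}[k]\cdot\prod_{l<r,\,-n<k<n}\bigl(\widetilde{Q}_r-\xi^k\widetilde{Q}_l\bigr)$. The key computation, using $\xi^k-1=(\xi-1)[k]$, is
\[
\widetilde{Q}_r-\xi^k\widetilde{Q}_l=(\xi-1)(Q_r-\xi^kQ_l)+(1-\xi^k)=(\xi-1)\bigl(Q_r-\xi^kQ_l-[k]\bigr),
\]
so pulling out a total factor of $(\xi-1)^{\binom{\ell}{2}(2n-1)}$, which is a unit in $K^\times$ since $\xi\neq 1$, shows that Ariki's condition is equivalent to the one in the lemma. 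This bookkeeping is the only step of real content and is therefore the main (very modest) obstacle.

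The final "split" assertion is then automatic in both cases, since the simple $\HH_{n,K}$-modules in the semisimple regime are the seminormal (equivalently Specht) modules labelled by $\ell$-multipartitions of $n$, and these are absolutely irreducible over $K$; both \cite[Theorem 6.11]{AMR} and \cite[Main Theorem]{A1} include this statement explicitly.
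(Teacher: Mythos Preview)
Your proof is correct and follows essentially the same route as the paper: split into the cases $\xi=1$ and $\xi\neq 1$, invoke \cite[Theorem 6.11]{AMR} and \cite[Main Theorem]{A1} respectively via Remark~\ref{rem.L'}, and in the non-degenerate case perform exactly the computation $(Q_r(\xi-1)+1)-\xi^k(Q_l(\xi-1)+1)=(\xi-1)\bigl(Q_r-\xi^kQ_l-[k]\bigr)$ to match the two conditions. Your treatment is slightly more detailed (you count the power of $\xi-1$ and comment on the split assertion), but there is no real difference in approach.
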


Let $d\in\N$. A composition of $d$ is a finite sequence $\alpha=(\alpha_1,\alpha_2,\cdots)$ of non-negative integers which sums to $d$, we write $|\alpha| = \sum_{k\geq 1} \alpha_k=d$.
A multicomposition of $d$ is an ordered $\ell$-tuple $\blam=(\lam^{(1)},\cdots,\lam^{(\ell)})$ of composition $\lam^{(k)}$ such that $\sum_{k=1}^{\ell}|\lam^{(k)}|=d$.
A partition of $d$ is a composition $\lambda=(\lambda_1,\lambda_2,...)$ of $d$ such that $\lam_1\geq\lam_2\geq\cdots$.
A multipartition of $d$ is a multicomposition $\bm{\lambda}=\left(\lambda^{(1)}, \cdots, \lambda^{(\ell)}\right)$ of $d$ such that each $\lam^{(k)}$ is a partition.
Given a composition $\lambda = (\lambda_1 ,\lambda_2 \ldots )$ of $d$, we define its conjugate $\lambda' = (\lambda'_1 , \lambda'_2 ,\ldots)$ by $\lambda'_k = \# \{ j\geq 1 \mid \lambda_j \geq k \}$, which is a partition of $d$.
Given a multicomposition $\bm{\lambda}=\left(\lambda^{(1)}, \cdots, \lambda^{(\ell)}\right)$, we called the multipartition $\bm{\lambda}'=\left({\lambda}^{(\ell)'}, \cdots, \lambda^{(1)'}\right)$ the conjugate of $\bm{\lambda}$.

We identify the multipartition $\bm{\lambda}$ with its multi-Young diagram that is the set of boxes
$$[{\bm{\lambda}}]=\left\{(l, r, c) \mid 1 \leq c \leq \lambda_{r}^{(l)}, 1 \leq l \leq \ell\right\}.$$
For example, if $\bm{\lambda}=\left((3,2,1),(1,1),(3,2) \right)$ then
\begin{equation*}
[\bm{\lambda}] = \Bigg( \ydiagram{3,2,1},~ \ydiagram{1,1},~ \ydiagram{3,2} \Bigg).
\end{equation*}
We use $\P_{\ell,n}$ to denote the set of multipartitions of $n$ with $\ell$ components. Then $\P_{\ell,n}$ becomes a poset ordered by dominance ``$\unrhd$'', where $\blam\unrhd\bmu$ if and only if
$$
\sum_{k=1}^{l-1}\left|\lambda^{(k)}\right|+\sum_{j=1}^{i} \lambda_{j}^{(l)} \geq \sum_{k=1}^{l-1}\left|\mu^{(k)}\right|+\sum_{j=1}^{i} \mu_{j}^{(l)},
$$
for any $1 \leq l \leq \ell$ and any $i \geq 1$. If $\bm{\lambda} \unrhd \bm{\mu}$ and $\bm{\lambda} \neq \bm{\mu}$, then we write $\bm{\lambda} \rhd \bm{\mu}$.

Let $\bm{\lambda}\in\Parts[\ell,n]$. A $\bm{\lambda}$-tableau is a bijective map $\t: [\bm{\lambda}] \mapsto \{ 1, 2,...,n\}$, for example,

\begin{equation*}
\t = \Bigg( \begin{ytableau}
            1 & 2 & 3\\
            4 & 5\\
            6
            \end{ytableau},~~
            \begin{ytableau}
            7\\
            8
            \end{ytableau},~~
            \begin{ytableau}
            9 & 10 & 11\\
            12 & 13
            \end{ytableau} \Bigg)
\end{equation*}
is a $\bm{\lambda}$-tableau, where $\bm{\lambda}=\left((3,2,1),(1,1),(3,2) \right)$ as above. If $\t$ is a $\bm{\lambda}$-tableau, then we set $\Shape(\t):=\bm{\lambda}$, and we call $\mathfrak{t}'=\left(\mathfrak{t}^{(\ell)'}, \cdots, \mathfrak{t}^{(1)'}\right)$ the conjugate of $\t$.

A $\bm{\lambda}$-tableau is standard if its entries increase along each row and each column in each component.
Let $\Std(\bm{\lambda})$ be the set of standard $\blam$-tableaux and $\Std^2(\bm{\lambda}):=\{(\s, \t) \mid \s, \t \in \Std(\bm{\lambda})\}$.

Let $\blam\in\Parts[\ell,n], \t\in\Std(\blam)$ and $1\leq m\leq n$. We use $\t_{\downarrow m}$ to denote the subtableau of $\t$ that contains the numbers $\{1, 2,...,m\}$. If $\t$ is a standard $\bm{\lambda}$-tableau then $\Shape(\t_{\downarrow m})$ is a multipartition for all $m \geq 0$. We define $\s \unrhd \t$ if
and only if $$
\Shape( \s_{\downarrow m} ) \unrhd \Shape (\t_{\downarrow m}),\quad\forall\,1\leq m\leq n.
$$
If $\s \unrhd \t$ and $\s \neq \t$, then write $\s \rhd \t$.

Let $\t^{\bm{\lambda}}$ be the standard $\blam$-tableau which has the numbers $1, 2,\cdots,n$ entered in order from left to right along the rows of $\t^{\lambda^{(1)}}$ and then $\t^{\lambda^{(2)}}, \cdots, \t^{\lambda^{(\ell)}}$. Similarly, let $\t_{\bm{\lambda}}$ be the standard $\blam$-tableau which has the numbers $1, 2, \cdots, n$ entered in order down the columns of $\t^{\lambda^{(\ell)}}, \cdots, \t^{\lambda^{(1)}}$. Given a standard $\bm{\lambda}$-tableau $\t$, we define $d(t), d'(\t)\in \Sym_n$ such that $\t = \t^{\bm{\lambda}} d(t) = \t_{\bm{\lambda}} d'(t)$, and set $w_{\bm{\lambda}}:=d(\t_{\bm{\lambda}})$. For any
$\t\in\Std(\bm{\lambda})$, we have $\t^{\bm{\lambda}} \unrhd \t \unrhd \t_{\bm{\lambda}}$.

\begin{dfn} Let $\bm{\lambda},\bm{\mu}\in\Parts[\ell,n]$, $(\s,\t)\in\Std^2(\bm{\lambda})$ and $(\u,\v)\in\Std^2(\bm{\mu})$.  We define
$$
\text{$(\s,\t)\unrhd (\u,\v)$ if either $\bm{\lam}\rhd\bm{\mu}$ or $\bm{\lam}=\bm{\mu}$, $\s\unrhd\u$ and $\t\unrhd\v$}.
$$
If $(\s,\t)\unrhd (\u,\v)$ and $(\s,\t)\neq (\u,\v)$, the we write $(\s,\t)\rhd(\u,\v)$.
\end{dfn}

The cyclotomic Hecke algebra is a cellular algebra with several different cellular bases. In order recall their constructions we need the following definition.

\begin{dfn}[\text{\rm cf. \cite{DJM2}, \cite{Ma}, \cite{BK08}}]\label{mnlam}
Let $\bm{\mu}\in \P_{\ell,n} $. We define
$$
\begin{aligned}
&\fm_{\bm{\mu}}:=\left(\sum_{w \in \Sym_{\mu}} T_{w}\right)\left(\prod_{k=2}^{\ell} \prod_{m=1}^{\left|\mu^{(1)}\right|+\cdots+\left|\mu^{(k-1)}\right|}\left(L_{m}-Q_{k}\right)\right), \\
&\fn_{\bm{\mu}}:=(-\xi)^{\mathbf{n}(\bmu)}\left(\sum_{w \in \Sym_{\mu'}} (-\xi)^{-\ell(w)} T_{w}\right)\left(\prod_{k=2}^{\ell} \prod_{m=1}^{\left|\mu^{(\ell)}\right|+\left|\mu^{(\ell-1)}\right|+\cdots+\left|\mu^{(\ell-k+2)}\right|}\left(L_{m}-Q_{\ell-k+1}\right)\right),
\end{aligned}
$$
where $\mathbf{n}(\bmu):=\sum_{i=1}^{\ell}(i-1)|\mu^{(i)}|$.
\end{dfn}

\begin{rem} 1) Note that the element $L_k$ in the notation of \cite{MM} is $\mL_k$ in our notations. Our definition of $\fm_\blam$ coincides with that in \cite{Zh} in the case when $\xi=1$, but differs with that in \cite{Ma} by a scalar (because we used different versions of Murphy operators and the cyclotomic parameter $Q_k$ in \cite{Ma} should be identified with $Q_k(\xi-1)+1$ in our notations.

2) our definition of $\fn_{\bmu}$ coincides with the notation $n_{\bmu'}$ in the notation of \cite{Zh} in the case when $\xi=1$ and differs with
the notation $n_{\bmu'}$ in \cite{Ma} by a scalar. Our slightly looking different definition of $\fn_\bmu$ can be better understood from Definition \ref{nst1}, Lemma \ref{cellular1} and Remark \ref{ind1} below.
\end{rem}

\begin{dfn}[{\cite{DJM2}}]\label{nst1}
Fix $\bm{\lambda}\in\P_{\ell,n}$, for any $\s,\t\in \Std(\bm{\lambda})$ define
$$
\begin{aligned}
\fm_{\s \t}:&=\left(T_{d(\mathfrak{\s})}\right)^{*} \mathfrak{m}_{\bm{\lambda}} T_{d(\t)},\\
\fn_{\s \t}:&=(-\xi)^{-\ell\left(d'(\s)\right)-\ell\left(d'(\t)\right)} (T_{d (\s')})^* \mathfrak{n}_{\bm{\lambda}} T_{d(\t')}.
\end{aligned}
$$
\end{dfn}

\begin{lem}[\cite{DJM2,MM}]\label{cellular1} The set $$\{\fm_{\s \t} \mid \s, \t \in \Std(\bm{\lambda}), \bm{\lambda} \in \P_{\ell, n}\}$$ form a cellular basis
of $\HH_{n}$ with respect to the poset $(\P_{\ell, n},\unrhd)$; while the set $$\{\fn_{\s \t} \mid \s, \t \in \Std(\bm{\lambda}), \bm{\lambda} \in \P_{\ell, n}\}$$ form a cellular basis
of $\HH_{n}$ with respect to the poset $(\P_{\ell, n},\unlhd)$.
\end{lem}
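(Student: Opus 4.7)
My plan is to deduce the first statement directly from the classical Dipper--James--Mathas construction and deduce the second from it by a sign-automorphism argument. For the $\fm$-basis, I would verify the three cellular axioms in turn. First, since $\fm_\blam$ is manifestly $\ast$-invariant and $\ast$ is an anti-involution that fixes the generators, one sees $\fm_{\s\t}^\ast = \fm_{\t\s}$. Second, the spanning property: one shows by induction on the dominance order, using Garnir-type relations and the commutativity of the $L_k$'s (compatible with the relation $L_{r+1}(T_r-\xi+1)=T_rL_r+1$), that any monomial in the standard basis \eqref{basis1} lies in the $R$-span of $\{\fm_{\s\t}\}$, with ``error terms'' living in the two-sided ideal $\HH_n^{\rhd\blam}$ spanned by $\fm_{\u\v}$ with $\Shape(\u)=\Shape(\v)\rhd\blam$. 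Third, the multiplication property: one establishes $\fm_{\s\t}h\equiv\sum_\v r_{\t\v}(h)\fm_{\s\v}\pmod{\HH_n^{\rhd\blam}}$, which is obtained by writing $T_{d(\t)}h$ in terms of $T_{d(\v)}$ modulo longer permutations, multiplying by $\fm_\blam$ and absorbing the longer terms into $\HH_n^{\rhd\blam}$. The cardinality $\sum_\blam |\Std(\blam)|^2=\ell^n\, n!$ matches $\dim\HH_n$ by Lemma \ref{stdBasis}, so linear independence is automatic once spanning is proved. For the unified presentation in Definition \ref{TypeA} this is carried out in \cite{Ma} (non-degenerate, after the scalar identification in Remark \ref{rem:dffpara}) and in \cite{Zh} (degenerate).

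For the $\fn$-basis, the efficient route is a \emph{sign-twist} argument. In the non-degenerate presentation one uses the standard algebra automorphism sending $T_r\mapsto -\xi T_r^{-1}=\xi-1-T_r$; in the unified presentation of Definition \ref{TypeA} the same formula $T_r\mapsto \xi-1-T_r$ combined with a suitable twist of $L_k$ gives an automorphism $\#$ of $\HH_n$ (this is readily checked on the defining relations, since $(\xi-1-T_r-\xi)(\xi-1-T_r+1)=(T_r-\xi)(T_r+1)=0$ and the braid/commutation relations are preserved). Under $\#$, the Murphy ``symmetrizer'' $\sum_{w\in\Sym_\mu}T_w$ is mapped, up to an explicit power of $-\xi$, to the ``antisymmetrizer'' $\sum_{w\in\Sym_{\mu'}}(-\xi)^{-\ell(w)}T_w$, and the cyclotomic factors reshuffle so that $\#(\fm_\blam)$ becomes a nonzero scalar multiple of $\fn_{\blam'}$; the normalizing prefactor $(-\xi)^{\mathbf{n}(\bmu)}$ in Definition \ref{mnlam} is chosen precisely to absorb this scalar. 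Because conjugation of multipartitions reverses dominance, i.e.\ $\blam\unrhd\bmu\iff\blam'\unlhd\bmu'$, transporting the $\fm$-cellular basis across $\#$ produces a basis indexed by the same poset $\Parts[\ell,n]$ but with the reversed order $\unlhd$. Combined with Definition \ref{nst1}, where the extra sign $(-\xi)^{-\ell(d'(\s))-\ell(d'(\t))}$ and the passage from $d(\t)$ to $d(\t')$ exactly match what $\#$ does to $\fm_{\s\t}$, this gives the second cellular basis.

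The main obstacle is the bookkeeping around the automorphism $\#$: one must check it is well-defined on the unified algebra $\HH_n$ (the tricky relation is $L_{r+1}(T_r-\xi+1)=T_rL_r+1$, which fixes $\#(L_k)$ recursively and requires a careful verification in the degenerate limit $\xi=1$), and one must track the precise scalar comparing $\#(\fm_{\s\t})$ with $\fn_{\s'\t'}$, since the signs $(-\xi)^{-\ell(d'(\s))}$ and the prefactor $(-\xi)^{\mathbf{n}(\bmu)}$ both enter. Once these two points are settled, the cellularity of $\{\fn_{\s\t}\}$ with respect to $\unlhd$ is formal from the cellularity of $\{\fm_{\s\t}\}$ with respect to $\unrhd$, since $\#$ carries the chain of two-sided ideals $\HH_n^{\gedom\blam}$ to $\HH_n^{\ledom\blam'}$ and intertwines $\ast$ with (possibly) $\ast$ itself (which is the final compatibility to check).
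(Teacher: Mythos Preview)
The paper does not prove this lemma; it simply cites \cite{DJM2}, \cite{Ma}, and \cite{Zh}. Your outline for the $\fm$-basis is the standard one and matches those references. Your sign-twist strategy for the $\fn$-basis is also exactly the route taken in \cite[\S3]{Ma} and recorded by the paper itself in Remark~\ref{ind1}, which introduces the involution $'$ with $\fm'_{\s\t}=\fn_{\s'\t'}$.

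There is, however, one genuine imprecision in your formulation that you should repair. You assert that $T_r\mapsto \xi-1-T_r$ together with ``a suitable twist of $L_k$'' gives an automorphism $\#$ of $\HH_n$. Over a fixed ground ring $R$ with fixed parameters $(\xi,Q_1,\dots,Q_\ell)$ this is false in general: any such map forces the cyclotomic relation $\prod_l(L_1-Q_l)=0$ to become $\prod_l(L_1-Q'_l)=0$ with a \emph{different} tuple $(Q'_1,\dots,Q'_\ell)$ (and, in the non-degenerate case, also inverts $\xi$). Concretely, the paper's Remark~\ref{ind1} makes this explicit: over the generic ring $\mathscr{A}=\Z[\dot q^{\pm1},\dot Q_1,\dots,\dot Q_\ell]$ the map $'$ sends $T_i\mapsto -\dot q^{-1}T_i$, $L_m\mapsto -\dot q L_m$, and simultaneously $\dot q\mapsto \dot q^{-1}$, $\dot Q_j\mapsto -\dot q\,\dot Q_{\ell-j+1}$. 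So $'$ is a \emph{ring} involution of $\HH_{n,\mathscr{A}}$, semilinear over $\mathscr{A}$, not an $\mathscr{A}$-algebra automorphism.

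This does not break your argument, but it changes the logical order: first establish cellularity of $\{\fn_{\s\t}\}$ over the generic ring $\mathscr{A}$ by transporting the $\fm$-cellular structure through the ring involution $'$ (the structure constants are carried by the base-ring automorphism and hence remain in $\mathscr{A}$, and conjugation of multipartitions reverses dominance, giving the poset $(\P_{\ell,n},\unlhd)$); then specialise along $\mathscr{A}\to R$ to obtain the result for arbitrary $(\xi,Q_1,\dots,Q_\ell)$. Once phrased this way, the ``obstacle'' you flag (checking $\#$ on the relation $L_{r+1}(T_r-\xi+1)=T_rL_r+1$) is a routine verification over $\mathscr{A}$, and your compatibility with $\ast$ is immediate since $'$ fixes each generator up to scalar.
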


Sometimes in order to emphasize the ground ring $R$ we shall use the notation $\fm_{\s\t}^R, \fn_{\s\t}^R$ instead of $\fm_{\s\t},\fn_{\s\t}$.

\begin{rem}\label{ind1} Let $q,\dot{Q}_1,\cdots,\dot{Q}_\ell$ be indeterminates over $\Z$. We set $$
\dot{q}:=\begin{cases} q, &\text{if $\xi\neq 1$;}\\ 1, &\text{if $\xi=1$.}
\end{cases}
$$
Set $\mathscr{A}:=\Z[\dot{q}^{\pm 1}, \dot{Q}_1,\cdots,\dot{Q}_\ell]$. Let $\mathcal{F}:=\Q(\dot{q}, \dot{Q}_1,\cdots,\dot{Q}_\ell)$.
Let $\HH_{n,\mathscr{A}}(\dot{q},\dot{\bQ})$ be the cyclotomic Hecke algebra of type $G(\ell,1,n)$ over $\mathscr{A}$ with Hecke parameter $\dot{q}$ and cyclotomic parameters $\dot{\bQ}:=(\dot{Q}_1,\cdots,\dot{Q}_\ell)$.
Set $\HH_{n,\mathcal{F}}(\dot{q},\dot{\bQ}):=\mathcal{F}\otimes_{\mathscr{A}}\HH_{n,\mathscr{A}}(\dot{q},\dot{\bQ})$. Then $\HH_{n,\mathcal{F}}(\dot{q},\dot{\bQ})$ is split semisimple.
We set $'$ to be the unique ring involution  of $\HH_{n,\mathscr{A}}(\dot{q},\dot{\bQ})$ (\cite[\S3]{Ma}) which is defined on generators by $$
L'_m=-\dot{q} L_m,\,\, {T}'_i:=-\dot{q}^{-1}{T}_i,\,\, \dot{q}':=\dot{q}^{-1},\,\,\dot{Q}'_j:=-\dot{q}\dot{Q}_{\ell-j+1},\quad \forall\,1\leq i<n,\, 1\leq j\leq\ell, 1\leq m\leq n .
$$
Clearly, $'$ naturally extends to a ring involution  of $\HH_{n,\mathscr{A}}(\dot{q},\dot{\bQ})$. We have that $\fm'_{\s\t}=\fn_{\s'\t'}$,
\end{rem}

Let $K$ be a field and $x$ be an indeterminate element over $K$. Set $\O_K:= K[x]_{(x)}$, $\K := K(x)$. We define \begin{equation}\label{OHecke}
\hat{Q}_1:=x^n+Q_1,\,\,\,\hat{Q}_2:=x^{2n}+Q_2, \cdots , \hat{Q}_\ell:=x^{\ell n}+Q_\ell,\,\,\, \hat{\xi}:=x+\xi.
\end{equation}
For $R\in\{\O_K,\K\}$, we denote the cellular bases of $\HH_{n,R}=\HH_{n,R}\left(\hat{\xi}, \hat{Q}_{1}, \cdots, \hat{Q}_{\ell}\right)$
by $\{\fm^{R}_{\s \t} \mid \s, \t \in \Std(\bm{\lambda}), \bm{\lambda} \in \P_{\ell, n}\}$. Since $\HH_{n,\O_K}$ is a free $\O_K$-module, the natural map
$\HH_{n,\O_K}\rightarrow \K\otimes_{\O_K}\HH_{n,\O_K}\cong\HH_{n,\K}$ is injective. Henceforth, we shall identify $\HH_{n,\O_K}$ with its image under this injection. In particular, $\fm_{\s\t}^{\O_K}$ will be identified with $\fm_{\s\t}^{\K}$. By Lemma \ref{ss}, we see that $\HH_{n,\K}$ is split semisimple.

For any $\t=(\t^{(1)},\cdots,\t^{(\ell)}) \in \Std(\bm{\lambda})$ and $1\leq k\leq n$, if $k$ appears in $i$-th row $j$-th column of $\t^{(c)}$ then we define
\begin{equation}\label{content}
c_{\t}(k)=\cont(\gamma):=\hat{\xi}^{j-i}\hat{Q}_c+(1+\hat{\xi}+\cdots+\hat{\xi}^{j-i-1}) ,\quad\text{where $\gamma:=(c,i,j)$.}
\end{equation}
We also define $C(k):=\left\{c_{\t}(k) \mid \t \in \Std(\bm{\lambda}), \bm{\lambda} \in \P_{\ell, n}\right\}$.

\begin{dfn}[{\cite{Ma}}]\label{dfn:Ft}
Suppose $\bm{\lam} \in \P_{\ell, n}$ and  $\t\in\Std(\bm{\lam})$. We define
$$
F_{\t}=\prod\limits^n\limits_{k=1}\prod\limits_{\substack{c\in C(k)\\c\neq c_{\t}(k)}}\frac{L_k-c}{c_{\t}(k)-c}.
$$
For any $\bm{\lam} \in \P_{\ell, n}$ and $\s,\t\in\Std(\bm{\lam})$, we define
$$
f_{\s\t}:=F_\s \fm_{\s\t}F_\t,\quad g_{\s\t}:=F_{\s}\fn_{\s\t}F_{\t}.
$$
\end{dfn}\noindent

\begin{rem}\label{convention} Note that our convention for the notations $\fn_{\s\t}, g_{\s\t}$ in this paper agrees with the one used in \cite{HW}, which differs with the corresponding notations in \cite{Ma} by a conjugation and an invertible scalar. The elements $\fn_{\s\t}, \mfg_{\s\t}$ in the current paper should be compared with the elements $n_{\s'\t'}, g_{\s'\t'}$ in \cite{Ma} up to some invertible scalar. In particular, our dual cellular basis $\{\fn_{\s\t}\}$ use the partial order ``$\unlhd$'', while \cite{Ma} use the partial order ``$\unrhd$'' for the dual cellular basis.
\end{rem}

\begin{lem}\text{(\cite[2.6, 2.11]{Ma},\cite[Proposition 6.8]{AMR})}\label{fsemi}  1) For any $\s,\t\in\Std(\blam), \u,\v\in\Std(\bmu)$, we have $$
f_{\s\t}f_{\u\v}=\delta_{\t\u}\gamma_\t f_{\s\v},
$$
for some $\gamma_\t\in\K^\times$;

2) The set $\bigl\{f_{\s\t}\bigm|\s,\t\in\Std(\blam),\blam\in\Parts[\ell,n]\bigr\}$ is a $\K$-basis of $\HH_{n,\K}$.

3) We have $$\begin{aligned}
\fm_{\s\t}&\equiv f_{\s\t}+\sum_{\substack{\u,\v\in\Std(\blam)\\ (\u,\v)\rhd(\s,\t)}}a_{\u\v}^{\s\t}f_{\u\v}\,\,\pmod{(\HH_{n,\K})^{\rhd\blam}},\\
f_{\s\t}&\equiv \fm_{\s\t}+\sum_{\substack{\u,\v\in\Std(\blam)\\ (\u,\v)\rhd(\s,\t)}}b_{\u\v}^{\s\t}\fm_{\u\v}\,\,\pmod{(\HH_{n,\K})^{\rhd\blam}},
\end{aligned}
$$
where $a_{\u\v}^{\s\t}, b_{\u\v}^{\s\t}\in\K$ for each pair $(\u,\v)$.
\end{lem}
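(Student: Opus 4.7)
The plan is to prove the three parts in the order 3), 2), 1), since the unitriangular expansion in 3) underpins the other two. The whole argument is a standard application of Murphy--Mathas seminormal form theory and it rests on the interplay between the Lagrange-type projectors $F_\t$ and the action of the Jucys--Murphy elements $L_k$ on the cellular basis $\{\fm_{\s\t}\}$. The crucial preliminary input, which I would first establish by an induction on $n$ using the explicit construction of $\fm_{\s\t}$ in Lemma~\ref{cellular1}, is that for each $\blam\in\Parts[\ell,n]$, $\s,\t\in\Std(\blam)$ and $1\leq k\leq n$,
$$
\fm_{\s\t}L_k \equiv c_\t(k)\,\fm_{\s\t}+\sum_{\substack{\v\in\Std(\blam)\\ \v\rhd\t}}r_\v\,\fm_{\s\v}\pmod{(\HH_{n,\K})^{\rhd\blam}},
$$
for some $r_\v\in\K$, with a symmetric statement for left multiplication. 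In words, modulo the ideal of strictly larger shapes, $\fm_{\s\t}$ is a generalised eigenvector for right multiplication by $L_k$ with eigenvalue $c_\t(k)$, in a frame triangular under the dominance order on the second tableau index.

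For part 3), I apply $F_\t$ on the right to $\fm_{\s\t}$. Since $F_\t$ is by construction the Lagrange interpolator that evaluates to $1$ on the joint eigenvalue sequence $(c_\t(1),\ldots,c_\t(n))$ and to $0$ on every other sequence drawn from $\prod_k C(k)$, the triangular display above yields
$$
\fm_{\s\t}F_\t\equiv \fm_{\s\t}+\sum_{\v\rhd\t}b_\v\,\fm_{\s\v}\pmod{(\HH_{n,\K})^{\rhd\blam}}.
$$
Applying the same argument with $F_\s$ on the left gives the claimed unitriangular expansion of $f_{\s\t}$, and inverting this system, indexed by the well-founded poset of pairs of standard tableaux, gives the converse expansion of $\fm_{\s\t}$ in the $f_{\s\t}$. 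Part 2) is then immediate: by Lemma~\ref{ss} together with Lemma~\ref{cellular1}, $\{\fm_{\s\t}\}$ is a $\K$-basis of $\HH_{n,\K}$, and a unitriangular transition preserves this.

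For part 1) the key auxiliary statement is separation of content sequences: for any two standard tableaux $\t,\u$ of any shapes in $\Parts[\ell,n]$, the sequences $(c_\t(1),\ldots,c_\t(n))$ and $(c_\u(1),\ldots,c_\u(n))$ coincide only when $\t=\u$. With the specific choice $\hat\xi=x+\xi$ and $\hat Q_c=x^{cn}+Q_c$ from \eqref{OHecke}, the top $x$-behaviour of each content $\hat\xi^{j-i}\hat Q_c+(1+\hat\xi+\cdots+\hat\xi^{j-i-1})$ encodes the component $c$ at a distinct polynomial scale from the row $i$ and column $j$, so any coincidence would force a polynomial identity in $x$ that is visibly impossible. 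Separation then implies $F_\t F_\u=\delta_{\t\u}F_\t$ and $L_kF_\t=F_\t L_k=c_\t(k)F_\t$, whence
$$
f_{\s\t}f_{\u\v}=F_\s\fm_{\s\t}F_\t F_\u\fm_{\u\v}F_\v=\delta_{\t\u}\,F_\s\fm_{\s\t}F_\t\fm_{\t\v}F_\v.
$$
When $\t=\u$, expanding the right-hand side in the $f$-basis via part 3) and using the two-sided orthogonality $F_\a f_{\s\v}=\delta_{\a\s}f_{\s\v}$ and $f_{\s\v}F_\b=\delta_{\b\v}f_{\s\v}$ forces the product to be a scalar multiple of $f_{\s\v}$. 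A standard associativity argument, varying $\s$ and $\v$ independently and pivoting on products of the form $f_{\s\t}f_{\t\t}f_{\t\v}$, shows that the scalar depends only on $\t$; its nonvanishing follows by matching the leading cellular coefficient with the nonzero Gram number $\<\fm_\t,\fm_\t\>$ extracted from the cellular product rule in the split semisimple situation guaranteed by Lemma~\ref{ss}.

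The main technical obstacle is the separation lemma, since it demands a uniform inspection of the top $x$-degrees of the contents $c_\t(k)$ across all standard tableaux of all multipartition shapes of $n$; the choice $\hat Q_c=x^{cn}+Q_c$ is engineered precisely so that the component index sits at a distinct polynomial scale from the residue data, which makes the separation check routine. Once separation is in hand, the rest is poset-graded linear algebra in the style of Murphy's original argument as extended by Ariki--Koike and Mathas.
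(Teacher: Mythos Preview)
Your proposal is correct and recovers precisely the argument given in the cited references \cite[2.6, 2.11]{Ma} and \cite[Proposition 6.8]{AMR}; the paper itself states the lemma without proof and defers to those sources. One minor simplification: rather than arguing separation of content sequences directly by inspecting top $x$-degrees, you may simply invoke Lemma~\ref{ss}, whose nonvanishing factors $\hat{Q}_r-\hat{\xi}^k\hat{Q}_l-(1+\hat{\xi}+\cdots+\hat{\xi}^{k-1})$ and $1+\hat{\xi}+\cdots+\hat{\xi}^{k-1}$ are exactly the differences of contents that need to be units, so separation is immediate.
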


The basis $\bigl\{f_{\s\t}\bigm|\s,\t\in\Std(\blam),\blam\in\Parts[\ell,n]\bigr\}$ is called {\it the seminormal basis} of $\HH_{n,\K}$.

\begin{lem}\text{(\cite[\S3]{Ma}, \cite[\S3,\S4]{HW})}\label{gsemi}  1) For any $\s,\t\in\Std(\blam), \u,\v\in\Std(\bmu)$, we have $$
g_{\s\t}g_{\u\v}=\delta_{\t\u}\check{\gamma}_{\t'} g_{\s\v},
$$
for some $\check{\gamma}_{\t'}\in\K^\times$;

2) The set $\bigl\{g_{\s\t}\bigm|\s,\t\in\Std(\blam),\blam\in\Parts[\ell,n]\bigr\}$ is a $\K$-basis of $\HH_{n,\K}$.

3) We have $$\begin{aligned}
\fn_{\s\t}&\equiv g_{\s\t}+\sum_{\substack{\u,\v\in\Std(\blam)\\ (\u,\v)\lhd(\s,\t)}}\check{a}_{\u\v}^{\s\t}g_{\u\v}\,\,\pmod{(\HH_{n,\K})^{\lhd\blam}},\\
g_{\s\t}&\equiv \fn_{\s\t}+\sum_{\substack{\u,\v\in\Std(\blam)\\ (\u,\v)\lhd(\s,\t)}}\check{b}_{\u\v}^{\s\t}\fn_{\u\v}\,\,\pmod{(\HH_{n,\K})^{\lhd\blam}},
\end{aligned}
$$
where $\check{a}_{\u\v}^{\s\t}, \check{b}_{\u\v}^{\s\t}\in\K$ for each pair $(\u,\v)$.
\end{lem}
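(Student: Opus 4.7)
The plan is to mirror the proof of Lemma \ref{fsemi}, transferring its three assertions from $(\fm_{\s\t}, f_{\s\t})$ to $(\fn_{\s\t}, g_{\s\t})$ via the ring involution $'$ of Remark \ref{ind1}. The first step is to verify the compatibility $F'_\t = \eta_\t\, F_{\t'}$ for some $\eta_\t\in\K^\times$. Since $'$ sends $L_m$ to $-\dot q L_m$ and rescales $\dot Q_j\mapsto -\dot q\dot Q_{\ell-j+1}$, while the tableau conjugation $\t\mapsto\t'$ reverses the component index, swaps rows and columns, and negates $j-i$, a direct computation from \eqref{content} shows that each factor $L_k - c_\t(k)$ appearing in $F_\t$ is sent to an invertible scalar multiple of $L_k - c_{\t'}(k)$. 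Combined with the identity $\fm'_{\s\t} = \fn_{\s'\t'}$ recorded in Remark \ref{ind1}, this yields $f'_{\s\t} = \eta_{\s\t}\, g_{\s'\t'}$ for some $\eta_{\s\t}\in\K^\times$.

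Parts (1) and (2) then follow by applying $'$ to Lemma \ref{fsemi}(1),(2): the orthogonality relation $f_{\s\t}f_{\u\v} = \delta_{\t\u}\gamma_\t f_{\s\v}$ transports, after absorbing the scalars $\eta_{\bullet}$ into new constants $\check\gamma_{\t'}\in\K^\times$, to the required identity $g_{\s'\t'}g_{\u'\v'} = \delta_{\t\u}\check\gamma_{\t'}g_{\s'\v'}$; and the $\K$-basis property passes through any ring automorphism of $\HH_{n,\K}$. For part (3), I would apply $'$ to Lemma \ref{fsemi}(3) and use two further observations: the conjugation $\blam\mapsto\blam'$ reverses the dominance order on $\P_{\ell,n}$ (and hence on pairs of standard tableaux), and the involution interchanges the cellular ideals $(\HH_{n,\K})^{\rhd\blam}$ and $(\HH_{n,\K})^{\lhd\blam'}$. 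After relabelling $(\s,\t,\u,\v)\leftrightarrow(\s',\t',\u',\v')$, this yields the asserted triangular expansions, with the constants $\check a_{\u\v}^{\s\t},\check b_{\u\v}^{\s\t}$ obtained from the $a,b$ of Lemma \ref{fsemi}(3) by rescaling with the various $\eta_\bullet$.

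The main obstacle is the content compatibility $F'_\t = \eta_\t F_{\t'}$: while elementary, it demands a careful tracking of how the residue $\hat\xi^{j-i}\hat Q_c+(1+\hat\xi+\cdots+\hat\xi^{j-i-1})$ attached to a box $(c,i,j)$ of $\blam$ is exchanged, under $'$, with an invertible scalar multiple of the residue attached to the box $(\ell-c+1,j,i)$ of $\blam'$. An alternative route entirely avoiding this computation is to work directly inside the split semisimple algebra $\HH_{n,\K}$: by Lemma \ref{fsemi} the scaled elements $\gamma_\t^{-1}f_{\s\t}$ form a complete system of matrix units, so the $F_\t$ are primitive orthogonal idempotents summing to $1$, and each subspace $F_\s\HH_{n,\K}F_\t$ is one-dimensional when $\Shape(\s)=\Shape(\t)$ and zero otherwise. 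This pins down $g_{\s\t}=F_\s\fn_{\s\t}F_\t$ up to a scalar, and combined with the cellular triangularity of $\{\fn_{\s\t}\}$ with respect to $\unlhd$ (Lemma \ref{cellular1}), forces (3), whence (1) and (2) follow by standard matrix-unit manipulations.
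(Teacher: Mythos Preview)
The paper does not supply its own proof, citing \cite[\S3]{Ma} and \cite[\S3,\S4]{HW} instead. Your main route via the involution $'$ is precisely the method of \cite[\S3]{Ma}: the identity $c_\t(k)' = -\dot{q}\,c_{\t'}(k)$ (which you correctly isolate as the crux) yields in fact $F_\t' = F_{\t'}$ with $\eta_\t = 1$, whence $f'_{\s\t} = g_{\s'\t'}$ over the generic fraction field $\mathcal{F}$ of Remark~\ref{ind1}. One point worth making explicit is that $'$ acts nontrivially on the parameters, so it is a ring involution of $\HH_{n,\mathcal{F}}$ rather than a $\K$-linear automorphism of $\HH_{n,\K}$; the transport argument therefore proves the lemma first over $\mathcal{F}$, and the passage to $\K$ then uses only that $\HH_{n,\K}$ is split semisimple with the same JM-separation and cellular data. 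Your alternative route --- arguing directly in $\HH_{n,\K}$ from the primitive idempotents $F_\t$ and the cellularity of $\{\fn_{\s\t}\}$ with respect to $\unlhd$ (Lemma~\ref{cellular1}) --- sidesteps this base-change issue entirely and is closer to the general seminormal framework of \cite{MathasSeminormal}; either approach is sound.
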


The basis $\bigl\{g_{\s\t}\bigm|\s,\t\in\Std(\blam),\blam\in\Parts[\ell,n]\bigr\}$ is called {\it the dual seminormal basis} of $\HH_{n,\K}$.

Let $\mC$ be a subset of the set of all multicompositions of $n$ which have $\ell$ components such that if $\bmu\in\mC$ and $\bnu\in\Parts[\ell,n]$ with $\bnu\rhd\bmu$ then $\bnu\in\mC$. We let $\mC^+$ be the set of multipartitions in $\mC$. Next we want to recall some basic knowledge about cyclotomic Schur algebras and their seminormal bases.

\begin{dfn}[\cite{DJM2}, \cite{BK08}]\label{CSchur} The cyclotomic Schur algebra $S_{n,R}$ over $R$ is defined to be the following endomorphism algebra: $$
S_{n,R}:=\End_{\HH_{n,R}}\Bigl(\bigoplus_{\bmu\in\mC}\fm_\bmu^R\HH_{n,R}\Bigr).
$$
If $\O\in\{K,\O_K,\K\}$, we use $S_{n,\O}$ to denote the cyclotomic Schur algebra corresponding to the cyclotomic Hecke algebra $\HH_{n,\O}$ with Hecke parameters and cyclotomic parameters given in (\ref{OHecke}).
\end{dfn}

Let $\blam\in\Parts[\ell,n]$ and $\bmu\in\mC$. A $\blam$-tableau $\bS$ has type $\bmu$ if its entries are ordered pairs $(i,k)$, where $1\leq i\leq n, 1\leq k\leq\ell$, such that for all $i$ and $k$ the number of times $(i,k)$ is an entry in $\bS$ is $\mu_i^{(k)}$. Given two pair $(i_1,k_1), (i_2,k_2)$, we write
$(i_1,k_1)<(i_2,k_2)$ if $k_1<k_2$, or $k_1=k_2$ and $i_1<i_2$. A $\blam$-tableau $\bS=(\bS^{(1)},\cdots,\bS^{(\ell)})$ of type $\bmu$ is said to be semistandard if \begin{enumerate}
\item the entries in each row of each component $\bS^{(k)}$ of $\bS$ are non–decreasing;
and
\item the entries in each column of each component $\bS^{(k)}$ of $\bS$ are strictly increasing;
and
\item for each $1\leq k\leq\ell$, no entry in $\bS^{(k)}$ has the form $(i,l)$ with $l < k$.
\end{enumerate}
We use $\mT(\blam,\bmu)$ to denote the set of semistandard $\blam$-tableaux of type $\bmu$. By definition, $\mT(\blam,\bmu)\neq\emptyset$ only if $\blam\unrhd\bmu$. Set $\omega:=(\emptyset,\cdots,\emptyset,(1^n))$. Then we have $\mT(\blam,\omega)=\Std(\blam)$. For any $\s\in\Std(\blam)$, we define $\bmu(\s)$ to be the $\blam$-tableau obtained from $\s$ by replacing each entry $m$ in $\s$ by $(i,k)$ if $m$ is in row $i$ of the $k$th component of $\t^\bmu$. Then $\bmu(\s)\in\mT(\blam,\bmu)$.

Let $\bmu,\bnu\in\mC$ and $\blam\in\Parts[\ell,n]$. For any $\bS\in\mT(\blam,\bmu), \bT\in\mT(\blam,\bnu)$, we define \begin{equation}
\fm_{\bS\bT}^R:=\sum_{\substack{\s,\t\in\Std(\blam)\\ \bmu(\s)=\bS, \bnu(\t)=\bT}}\fm_{\s\t}^R .
\end{equation}

\begin{lem}\text{(\cite[Proposition 6.3]{DJM2})}\label{fmST} Let $\bmu,\bnu\in\mC$ and $\blam\in\Parts[\ell,n]$. Then for any $\bS\in\mT(\blam,\bmu), \bT\in\mT(\blam,\bnu)$,
$\fm_{\bS\bT}^R\in\HH_{n,R}\fm_{\bnu}^R\cap \fm_{\bmu}^R\HH_{n,R}$.
\end{lem}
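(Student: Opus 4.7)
The plan is to establish only the right-sided inclusion $\fm_{\bS\bT}^R\in\fm_\bmu^R\HH_{n,R}$; the left-sided inclusion $\fm_{\bS\bT}^R\in\HH_{n,R}\fm_\bnu^R$ then follows by applying the anti-involution $\ast$ after swapping the roles of $(\bmu,\bS)$ and $(\bnu,\bT)$. The identities needed for this reduction are $(\fm_{\s\t}^R)^\ast=\fm_{\t\s}^R$ and $(\fm_\bmu^R)^\ast=\fm_\bmu^R$, both of which reduce to the commutation $x_\blam u_\blam^{+}=u_\blam^{+}x_\blam$, where $x_\blam=\sum_{w\in\Sym_\blam}T_w$ and $u_\blam^{+}=\prod_{k=2}^{\ell}\prod_{m=1}^{|\lam^{(1)}|+\cdots+|\lam^{(k-1)}|}(L_m-Q_k)$. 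This commutation holds because each inner factor $\prod_{m=1}^{a_k}(L_m-Q_k)$ is a symmetric polynomial in $L_1,\ldots,L_{a_k}$, and symmetric polynomials in the Jucys-Murphy elements are central in the corresponding parabolic Hecke subalgebra, hence commute with every $T_w$ for $w\in\Sym_\blam$.

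For the right-sided inclusion I would expand
$$
\fm_{\bS\bT}^R=\sum_{(\s,\t)}T_{d(\s)}^{\ast}\fm_\blam^R T_{d(\t)}
$$
and concentrate on the partial sum over $\s$ with $\bmu(\s)=\bS$. The combinatorial point is that $\bmu(\s)=\bmu(\s')$ precisely when $d(\s)$ and $d(\s')$ lie in the same left $\Sym_\bmu$-coset in $\Sym_n$, so the set $\{d(\s):\bmu(\s)=\bS\}$ is a transversal in a single $(\Sym_\bmu,\Sym_\blam)$-double coset indexed by a distinguished representative $d_\bS$ attached to the semistandard shape $\bS$. Together with the symmetrizer $x_\blam$ that appears as a left factor of $\fm_\blam^R=x_\blam\cdot u_\blam^{+}$, standard Hecke-algebraic double-coset manipulations rearrange this partial sum into $x_\bmu\cdot T_{d_\bS}\cdot x_\blam\cdot u_\blam^{+}$, up to invertible length scalars arising from the stabilizer $\Sym_\bmu\cap d_\bS\Sym_\blam d_\bS^{-1}$.

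The main obstacle is then to re-factor this expression as $\fm_\bmu^R\cdot h=x_\bmu u_\bmu^{+}\cdot h$ for some $h\in\HH_{n,R}$. The hypothesis $\mT(\blam,\bmu)\ne\emptyset$ forces the dominance $\blam\unrhd\bmu$, and hence $|\lam^{(1)}|+\cdots+|\lam^{(k-1)}|\geq|\mu^{(1)}|+\cdots+|\mu^{(k-1)}|$ for every $k$; consequently $u_\bmu^{+}$ literally divides $u_\blam^{+}$ inside the commutative subalgebra $R[L_1,\ldots,L_n]$, and the leftover factors of $u_\blam^{+}$ form part of $h$. The delicate step is to slide $u_\bmu^{+}$ past $T_{d_\bS}\cdot x_\blam$ to the far left: applying the Hecke relations to move each linear factor $(L_m-Q_k)$ across $T_{d_\bS}$ generates correction terms that sit in strictly lower strata of the dominance filtration of $\HH_{n,R}$, which are then absorbed by induction on $\blam\in\Parts[\ell,n]$ with respect to the dominance order. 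This is essentially the inductive scheme of \cite[Proposition 6.3]{DJM2}, and it is insensitive to whether $\xi=1$ or $\xi\ne 1$, so the argument applies verbatim in the degenerate case as well.
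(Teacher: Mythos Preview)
The paper itself gives no proof here; it simply cites \cite[Proposition~6.3]{DJM2} and remarks elsewhere that the arguments of \cite{DJM2} carry over verbatim to the degenerate case $\xi=1$. So your proposal is really an attempted reconstruction of the DJM2 argument, and it should be judged on its own merits.

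Your overall architecture (reduce to one side via $\ast$, group the $\s$ with $\bmu(\s)=\bS$ by a double coset, then handle the $u^+$-factors) is the right one, but there is a concrete error in the double-coset step. You assert that the partial sum $\sum_{\bmu(\s)=\bS}T_{d(\s)}^\ast\,x_\blam$ equals $x_\bmu T_{d_\bS}x_\blam$ ``up to invertible length scalars arising from the stabilizer''. This is false: the scalar relating these two expressions is the Poincar\'e polynomial $\sum_{w\in \Sym_\bmu\cap d_\bS\Sym_\blam d_\bS^{-1}}\xi^{\ell(w)}$, which is \emph{not} a unit in $R$ in general (take $\blam=\bmu$, $d_\bS=1$, and $\xi$ a root of unity). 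The correct statement, and the one actually used in \cite{DJM2}, is that your partial sum is exactly $\sum_{w\in D}T_w$ for the full double coset $D=\Sym_\bmu d_\bS^{-1}\Sym_\blam$, and this element lies in $x_\bmu\HH_{n,R}$ directly because it is annihilated on the left by every $T_i-\xi$ with $s_i\in\Sym_\bmu$. No division is required.

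Your ``delicate step'' is also too vague as written. Once one knows the partial sum lies in $x_\bmu\HH_{n,R}$, one still has to pass from $x_\bmu\HH_{n,R}$ to $\fm_\bmu\HH_{n,R}=x_\bmu u_\bmu^+\HH_{n,R}$, and commuting factors of $u_\bmu^+$ across $T_{d_\bS}$ does not produce error terms that sit cleanly ``in strictly lower strata of the dominance filtration of $\HH_{n,R}$''; the corrections are arbitrary $L$-polynomials times shorter $T_w$'s, not cellular basis elements. In \cite{DJM2} this is handled not by naive commutation but by exploiting the specific combinatorics of the distinguished double-coset representative $d_\bS$: semistandardness of $\bS$ forces $d_\bS$ to lie in a particular parabolic that makes the relevant $L$-factors genuinely commute past it (compare the role of condition~(c) in the definition of semistandard). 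Your induction-on-dominance sketch does not capture this and would not close as stated.
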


\begin{dfn}\text{(\cite[Theorem 6.6]{DJM2})}  Let $\bmu,\bnu\in\mC$, $\blam\in\Parts[\ell,n]$, $\bS\in\mT(\blam,\bmu)$ and $\bT\in\mT(\blam,\bnu)$. We define $$
\varphi^R_{\bS\bT}\in\Hom_{\HH_{n,R}}\bigl(\fm_{\bnu}^R\HH_{n,R}, \fm_{\bmu}^R\HH_{n,R}\bigr)$$ by $$
\varphi^R_{\bS\bT}(\fm_{\bnu}^R h):=\fm_{\bS\bT}^R h,\quad\forall\,h\in \HH_{n,R} .
$$
Extend $\varphi^R_{\bS\bT}$ to an element of $S_{n,R}$ by defining $\varphi^R_{\bS\bT}$ to be zero on $\fm_{\brho}^R\HH_{n,R}$ whenever $\bnu\neq\brho\in\mC$.
\end{dfn}

Recall that $\omega=(\emptyset,\cdots,\emptyset,(1^n))$.

\begin{lem}\label{varphim1}  Assume that $\omega\in\mC^+$. Let $\blam\in\Parts[\ell,n]$. Then there is a natural isomorphism between the centralizer subalgebra $\phi^1_{\omega\omega}S_{n,R}  \phi^1_{\omega\omega}$ of $S_{n,R} $ and the cyclotomic Hecke algebra $\HH_{n,R} $, where $\phi^1_{\omega\omega}$ is defined in Definition~\ref{dfn:cenopr} below. Furthermore, for any $\s,\t\in\Std(\blam)=\mathcal{T}_0(\blam,\omega)$, $\varphi_{\s\t}^R$ is the same as the endomorphism of $\HH_{n,R}^\lam$ given by left multiplication with the element $\fm_{\s\t}^R$.
\end{lem}

\begin{proof} This follows directly from the definition of $\varphi^R_{\bS\bT}$.
\end{proof}

\begin{lem}[\text{\cite[Theorem 6.6]{DJM2}}] The following set \begin{equation}\label{stdbasis}
\bigl\{\varphi^R_{\bS\bT}\bigm|\bS\in\mT(\blam,\bmu), \bT\in\mT(\blam,\bnu),\bmu,\bnu\in\mC,\blam\in\mC^+\bigr\}
\end{equation}
is a cellular $R$-basis of $S_{n,R}$. If $R=K$, then $S_{n,K}$ is a quasi-hereditary algebra over $K$.
\end{lem}
The cellular basis (\ref{stdbasis}) is called the {\bf semistandard basis} of the cyclotomic Schur algebra $S_{n,R}$.

For each $\blam\in\mC^+$, we define $S_{n,R}^{\rhd\blam}$ to be the $R$-submodule of $S_{n,R}$ spanned by $$
\bigl\{\varphi^R_{\bS\bT}\bigm|\bS\in\mT(\brho,\bmu), \bT\in\mT(\brho,\bnu),\bmu,\bnu\in\mC,\blam\lhd\brho\in\mC^+\bigr\}.
$$
By \cite[Theorem 6.6]{DJM2}, $S_{n,R}^{\rhd\blam}$ is a two-sided ideal of $S_{n,R}$.

\begin{dfn}\text{(\cite[Definition 6.13]{DJM2})}  Let $\blam\in\mC^+$. The Weyl module $\Delta_R^\blam$ is the left $S_{n,R}$-submodule of $S_{n,R}/S_{n,R}^{\rhd\blam}$ given by $$
\Delta_R^\blam:=S_{n,R}\Bigl(\varphi_{\bT^\blam\bT^\blam}+(S_{n,R})^{\rhd\blam}\Bigr).
$$
\end{dfn}
For any $\bS\in\mT(\blam,\bmu)$, we set $\varphi^R_\bS:=\varphi^R_{\bS\bT^\blam}+(S_{n,R})^{\rhd\blam}$. Then $\{\varphi^R_\bS|\bS\in\mT(\blam,\bmu),\bmu\in\mC\}$ is an $R$-basis of the Weyl module $\Delta_R^\blam$.
There is a natural bilinear form on $\Delta_R^\blam$ which satisfies that for any $\bS,\bT\in\mT(\blam,\bmu), \bU\in\mT(\blam,\balpha)$ and $\bV\in\mT(\blam,\bnu)$ , \begin{equation}\label{bilinear1}
\varphi^R_{\bU\bS}\varphi^R_{\bT\bV}\equiv \<\varphi^R_\bS,\varphi^R_\bT\>\varphi^R_{\bU\bV}\,\pmod{S_{n,R}^{\rhd\blam}} .
\end{equation}
In particular, by the proof of \cite[Theorem 6.16]{DJM2} (and a similar calculation in the degenerate case), we have \begin{equation}\label{bilinear2}
\<\varphi^R_{\bT^\blam}, \varphi^R_{\bT^\blam}\>=1 .
\end{equation}

\bigskip
\section{The unified trace form on cyclotomic Hecke algebra}
\label{sec.unified.traceform}

In this section we shall give a unified definition of symmetrizing form $\tau$ on $\HH_{n}$ and also give a self-contained proof that $\HH_{n}$ is a symmetric algebra.

For later use, we need the following lemma, which is an analogue of the result in \cite[Lemma 3.3]{MM} for the Jucys-Murphy operators $\{L_k\}$ (while \cite[Lemma 3.3]{MM} used the Jucys-Murphy operators $\{\mL_k\}$).

\begin{lem}\label{commurel2} Let $1\leq i<n$ be an integer. Let $k,m\in\N$. \begin{enumerate}
\item If $k \geq m$ then $$
    T_{i} L_{i+1}^{m} L_{i}^{k} =  L_{i+1}^{k} L_{i}^{m} T_{i} - (\xi-1) \sum_{c=m+1}^{k} L_{i+1}^{c} L_{i}^{k+m-c} - \sum_{c=m}^{k-1} L_{i+1}^{c} L_{i}^{k+m-c-1}.
$$
\item If $k \leq m$ then $$
    T_{i} L_{i+1}^{m} L_{i}^{k} = L_{i+1}^{k} L_{i}^{m} T_{i} + (\xi-1)\sum_{c=k+1}^{m} L_{i+1}^{c} L_{i}^{k+m-c} + \sum_{c=k}^{m-1} L_{i+1}^{c} L_{i}^{k+m-c-1}.
$$
\end{enumerate}
\end{lem}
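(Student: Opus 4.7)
The plan is to reduce everything to two basic commutation identities and then induct on the exponents. First, I would rewrite the defining relation $L_{i+1}(T_i - \xi + 1) = T_i L_i + 1$ as
\[
T_i L_i \;=\; L_{i+1} T_i - (\xi-1)L_{i+1} - 1, \qquad (\ast)
\]
and apply the anti-involution $\ast$ of $\HH_n$ (which fixes each of $L_i, L_{i+1}, T_i$) to the equivalent form $L_{i+1}T_i = T_iL_i + 1 + (\xi-1)L_{i+1}$ to obtain the ``dual'' relation
\[
T_i L_{i+1} \;=\; L_i T_i + 1 + (\xi-1)L_{i+1}. \qquad (\ast\ast)
\]

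Next I would handle the diagonal/base case $k=m$, where both formulas in (a) and (b) have empty sums and collapse to $T_i L_{i+1}^m L_i^m = L_{i+1}^m L_i^m T_i$. For this it is enough to verify that $T_i$ commutes with $L_{i+1}L_i$: expand $T_i L_{i+1}L_i$ via $(\ast\ast)$, then rewrite $L_i T_i L_i$ via $(\ast)$, use that $L_i$ and $L_{i+1}$ commute, and check that the error terms cancel in pairs. Since $L_{i+1}^m L_i^m = (L_{i+1}L_i)^m$, the base case follows.

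For the inductive step of (a), I would fix $m$ and induct on $k \geq m$. Assuming the formula for $k$, multiply on the right by $L_i$; the only nontrivial rewrite is
\[
L_{i+1}^k L_i^m T_i L_i \;=\; L_{i+1}^{k+1} L_i^m T_i - (\xi-1) L_{i+1}^{k+1} L_i^m - L_{i+1}^k L_i^m,
\]
using $(\ast)$. The two new boundary monomials are exactly the $c = k+1$ term of the $(\xi-1)$-sum and the $c = k$ term of the other sum in the desired identity for $k+1$, so reindexing extends the summation ranges to $\sum_{c=m+1}^{k+1}$ and $\sum_{c=m}^{k}$ respectively. For case (b) I would fix $k$ and induct on $m \geq k$, this time multiplying on the \emph{left} by $L_{i+1}$ and using $(\ast\ast)$ to move $T_i$ past $L_{i+1}$; by the symmetry between $(\ast)$ and $(\ast\ast)$ the same boundary-absorption mechanism extends the sums to $\sum_{c=k+1}^{m+1}$ and $\sum_{c=k}^{m}$.

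The only real obstacle is bookkeeping: making sure each new boundary contribution lines up with the correct endpoint of the correct sum after multiplication by $L_i$ or $L_{i+1}$. There is no deeper structural difficulty; the two statements are mirror images of each other under the involution $\ast$ combined with swapping left/right multiplication, so once (a) is done, (b) requires only a parallel verification.
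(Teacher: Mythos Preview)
Your plan for part (a) is correct and coincides with the paper's: induct on $k-m$ (equivalently, fix $m$ and increase $k$), multiply the inductive hypothesis on the right by $L_i$, and use $(\ast)$ to rewrite $L_{i+1}^k L_i^m T_i L_i$; the two new monomials are absorbed as the top endpoints of the two sums, exactly as you describe.

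For part (b) there is one slip: multiplying on the \emph{left} by $L_{i+1}$ does not advance $m$. The left-hand side becomes $L_{i+1}T_i\,L_{i+1}^m L_i^k$, and since $L_{i+1}T_i = T_iL_i + 1 + (\xi-1)L_{i+1}$ (this is the $\ast$-image of the defining relation, not your $(\ast\ast)$), you end up with $T_i L_{i+1}^m L_i^{k+1}$, i.e.\ you have moved $k$ rather than $m$. The fix is simply to multiply on the \emph{right} by $L_{i+1}$ instead: the left-hand side becomes $T_i L_{i+1}^{m+1}L_i^k$ (using $L_iL_{i+1}=L_{i+1}L_i$), and on the right you rewrite $L_{i+1}^k L_i^m T_i L_{i+1}$ via $(\ast\ast)$ as $L_{i+1}^k L_i^{m+1}T_i + L_{i+1}^k L_i^m + (\xi-1)L_{i+1}^{k+1}L_i^m$. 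After reindexing, the two extra monomials are the new \emph{bottom} endpoints $c=k$ and $c=k+1$ of the two sums, so the mechanism is the same as in (a) with the roles of top/bottom boundaries swapped. With this correction your parallel induction goes through.

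The paper handles (b) more economically: rather than a second induction, it applies the anti-involution $\ast$ directly to the statement of (a) with the roles of $k$ and $m$ exchanged, and then rearranges. You already observe this mirror symmetry in your last paragraph, so you could shorten your argument by using it to \emph{deduce} (b) from (a) rather than merely running a parallel computation.
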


\begin{proof} a) Suppose that $k\geq m$. We use induction on $k-m$. If $k-m=0$, then as $T_i$ commutes with $L_iL_{i+1}$ we have that $T_{i} L_{i+1}^{m} L_{i}^{k} =  L_{i+1}^{k} L_{i}^{m} T_{i}$.
If $k-m =1$, then
$$  \begin{aligned}
    T_{i} L_{i+1}^{m} L_{i}^{m+1} & = T_{i} (L_{i+1}^{m} L_{i}^{m}) L_i = (L_{i+1}^{m} L_{i}^{m}) (T_{i} L_{i})  = L_{i+1}^{m} L_{i}^{m} (L_{i+1}T_{i} - (\xi-1)L_{i+1} -1) \\
    & = L_{i+1}^{m+1} L_{i}^{m} T_{i} -(\xi-1) L_{i+1}^{m+1}L_{i}^{m} - L_{i+1}^{m}L_{i}^{m} .
  \end{aligned} $$
Suppose the statement in part a) of the lemma holds when $k-m= t\in\Z^{\geq 1}$. That says,
$$
T_{i} L_{i+1}^{m} L_{i}^{m+t} =  L_{i+1}^{m+t} L_{i}^{m} T_{i} - (\xi-1) \sum_{c=m+1}^{m+t} L_{i+1}^{c} L_{i}^{m+t+m-c} - \sum_{c=m}^{m+t-1} L_{i+1}^{c} L_{i}^{m+t+m-c-1}.
$$
Now assume that $k-m= t+1$. Then we have $$
\begin{aligned}
   &\quad\, T_{i} L_{i+1}^{m} L_{i}^{m+t+1}\\
& =  (T_{i}L_{i+1}^{m} L_{i}^{m+t}) L_{i}\\
& =\Bigl(L_{i+1}^{m+t} L_{i}^{m} T_{i} - (\xi-1) \sum_{c=m+1}^{m+t} L_{i+1}^{c} L_{i}^{m+t+m-c} - \sum_{c=m}^{m+t-1} L_{i+1}^{c} L_{i}^{m+t+m-c-1}\Bigr)L_{i}\,\,\,\text{(by induction hypothesis)}  \\ \notag
& =  L_{i+1}^{m+t} L_{i}^{m} T_{i}L_i - (\xi-1) \sum_{c=m+1}^{m+t} L_{i+1}^{c} L_{i}^{m+t+m-c+1} - \sum_{c=m}^{m+t-1} L_{i+1}^{c} L_{i}^{m+t+m-c}\\
&=L_{i+1}^{m+t} L_{i}^{m}\bigl(L_{i+1}T_i-(\xi-1)L_{i+1}-1\bigr) - (\xi-1) \sum_{c=m+1}^{m+t} L_{i+1}^{c} L_{i}^{m+t+m-c+1} - \sum_{c=m}^{m+t-1} L_{i+1}^{c} L_{i}^{m+t+m-c}\\
&=L_{i+1}^{m+t+1} L_{i}^{m} T_{i} - (\xi-1) \sum_{c=m+1}^{m+t+1} L_{i+1}^{c} L_{i}^{m+t+1+m-c} - \sum_{c=m}^{m+t} L_{i+1}^{c} L_{i}^{m+t+1+m-c-1}.
  \end{aligned} $$
So we are done. This proves part a) of the lemma.

b) Suppose $m\geq k$. By a), we have that $$
T_{i} L_{i+1}^{k} L_{i}^{m} =  L_{i+1}^{m} L_{i}^{k} T_{i} - (\xi-1) \sum_{c=k+1}^{m} L_{i+1}^{c} L_{i}^{k+m-c} - \sum_{c=k}^{m-1} L_{i+1}^{c} L_{i}^{k+m-c-1}.
$$
Applying the anti-involution $\ast$ to a), we get that $$
L_{i+1}^{k} L_{i}^{m}T_{i}  = T_{i} L_{i+1}^{m} L_{i}^{k}  - (\xi-1) \sum_{c=k+1}^{m} L_{i+1}^{c} L_{i}^{k+m-c} - \sum_{c=k}^{m-1} L_{i+1}^{c} L_{i}^{k+m-c-1},
$$
which is equivalent to $$
T_{i} L_{i+1}^{m} L_{i}^{k} = L_{i+1}^{k} L_{i}^{m} T_{i} + (\xi-1)\sum_{c=k+1}^{m} L_{i+1}^{c} L_{i}^{k+m-c} + \sum_{c=k}^{m-1} L_{i+1}^{c} L_{i}^{k+m-c-1}.
$$
This proves part b) of the lemma.
\end{proof}

\begin{cor}
  Suppose that $1<i \leq n$ and $k \geq 1$. Then
    \begin{equation} \notag
        L_{i}^{k}=\xi^{-1} T_{i-1} L_{i-1}^{k} T_{i-1}+\xi^{-1}(\xi-1) \sum_{c=1}^{k-1} L_{i}^{c} L_{i-1}^{k-c} T_{i-1} + \xi^{-1} \sum_{c=0}^{k-1} L_{i-1}^{c} L_{i}^{k-1-c} T_{i-1}.
    \end{equation}
\end{cor}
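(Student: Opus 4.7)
The plan is to derive this identity from Lemma~\ref{commurel2}(a) with $m=0$ by exploiting the quadratic relation $T_{i-1}^2 = (\xi-1)T_{i-1} + \xi$ (which comes from $(T_{i-1}+1)(T_{i-1}-\xi)=0$).

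First, I would specialize Lemma~\ref{commurel2}(a) to $m=0$ (so $k\geq m$ holds trivially) and shift the index from $i$ to $i-1$, obtaining
$$T_{i-1} L_{i-1}^k \;=\; L_i^k T_{i-1} \;-\; (\xi-1)\sum_{c=1}^k L_i^c L_{i-1}^{k-c} \;-\; \sum_{c=0}^{k-1} L_i^c L_{i-1}^{k-1-c}.$$
Rearranging gives an expression for $L_i^k T_{i-1}$; peeling off the $c=k$ summand $L_i^k$ from the first sum yields a $(\xi-1) L_i^k$ term plus the sums that will appear in the final answer.

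Next, I would multiply this rearranged identity on the right by $T_{i-1}$ and compute $L_i^k T_{i-1}^2$ in two different ways. On one hand, $L_i^k T_{i-1}^2 = (\xi-1) L_i^k T_{i-1} + \xi L_i^k$. On the other hand, substituting the expression for $L_i^k T_{i-1}$ just obtained produces $T_{i-1} L_{i-1}^k T_{i-1} + (\xi-1) L_i^k T_{i-1}$ plus the two sums with an extra $T_{i-1}$ on the right. Equating the two expressions, the $(\xi-1) L_i^k T_{i-1}$ contributions cancel, and one can then solve for $L_i^k$ using that $\xi \in R^\times$.

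The final bookkeeping step is to match the shape of the sums to the statement. Since the Jucys--Murphy elements commute with one another, we have $L_i^c L_{i-1}^{k-1-c} = L_{i-1}^{k-1-c} L_i^c$, and re-indexing the summation variable $c \mapsto k-1-c$ turns $\sum_{c=0}^{k-1} L_i^c L_{i-1}^{k-1-c} T_{i-1}$ into $\sum_{c=0}^{k-1} L_{i-1}^c L_i^{k-1-c} T_{i-1}$, matching the third sum in the corollary exactly; the middle sum already matches. I don't anticipate a serious obstacle here: the only non-routine step is spotting the ``multiply by $T_{i-1}$ and use $T_{i-1}^2 = (\xi-1)T_{i-1}+\xi$'' trick, after which the remainder is a bounded symbolic manipulation.
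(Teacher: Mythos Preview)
Your proposal is correct. Both your argument and the paper's rest on the same two ingredients: Lemma~\ref{commurel2}(a) with $m=0$, and the quadratic relation for $T_{i-1}$. The organization differs slightly. The paper frames the proof as an induction on $k$, writing $L_i^{k+1}=L_i^k\cdot L_i$ and using the base case $L_i=\xi^{-1}(T_{i-1}L_{i-1}T_{i-1}+T_{i-1})$ together with Lemma~\ref{commurel2}(a) to expand $L_i^kT_{i-1}$; in fact the inductive hypothesis beyond $k=1$ is never invoked. Your route is the direct one: multiply the rearranged lemma by $T_{i-1}$, use $T_{i-1}^2=(\xi-1)T_{i-1}+\xi$, cancel $(\xi-1)L_i^kT_{i-1}$, and solve for $L_i^k$. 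Your version avoids the (spurious) inductive framing and is a touch cleaner; otherwise the two computations are equivalent.
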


\begin{proof}
We prove the statement by induction on $k$.
The case when $k=1$ is clear as the it follows directly from the defining relation of $\HH_{n,R}$. In general, assume $k\geq 1$. Taking $m=0$ in part a) of Lemma \ref{commurel2}, we see that
  \begin{equation} \notag
    \begin{aligned}
         L_{i}^{k+1} &=\xi^{-1} L_{i}^{k} T_{i-1} L_{i-1} T_{i-1} +\xi^{-1} L_{i}^{k} T_{i-1} \\
          &=\xi^{-1}\left\{T_{i-1} L_{i-1}^{k}+(\xi-1) \sum_{c=1}^{k} L_{i}^{c} L_{i-1}^{k-c} + \sum_{c=0}^{k-1} L_{i-1}^{c} L_{i}^{k-c-1} \right\} L_{i-1} T_{i-1} +\xi^{-1} L_{i}^{k} T_{i-1} \\
          &=\xi^{-1} T_{i-1} L_{i-1}^{k+1} T_{i-1}+\xi^{-1}(\xi-1) \sum_{c=1}^{k} L_{i}^{c} L_{i-1}^{k+1-c} T_{i-1} + \xi^{-1} \sum_{c=0}^{k} L_{i-1}^{c} L_{i}^{k-c} T_{i-1}.
    \end{aligned}
  \end{equation}
This proves the corollary.
\end{proof}

Our purpose is to give a unified definition of the symmetrizing form $\tau$ on $\HH_{n,R}$ which generalizes both $\tau^{\MM}$ and $\tau^{\BK}$. Before doing this, we first give some examples to show that some ``native'' generalization of $\tau^{\MM}$ and $\tau^{\BK}$ dos not work.

One ``native'' option is to mimic the definition of $\tau^{\MM}$ to define a $K$-linear function on $\HH_{n}$ as follows:
$$
\tau^{\MM'}(L_1^{c_1}\cdots L_n^{c_n}T_w):=
\begin{cases}
1, &\text{if $w=1$ and $c_1=\cdots=c_n=0$, }\\
0, &\text{otherwise.}
\end{cases}
$$
We extend the map $\tau^{\MM'}$ linearly to an $R$-linear function on $\HH_{n,R}$. The following example shows that this linear function is not a symmetric form.

\begin{exmp}
Suppose $\xi = 1$. Then in the degenerate cyclotomic Hecke algebra $H_{2,2}$ we have $$
\tau^{\MM'}(L_1T_1)  =0, \quad \,\tau^{\MM'}(T_1L_1)  =\tau^{\MM'}(L_2T_1-1)=-1.
$$
\end{exmp}

Another ``native'' option is simply to use $\tau^{\BK}$ to define a linear function on $\HH_{n}$. In the following example we show that this linear function is not a symmetric form.

\begin{exmp}
Suppose $\xi \neq 1$. In the non-degenerate cyclotomic Hecke algebra $\mathcal{H}_{2,2}$, we have
$$\begin{aligned}
\tau^{\BK}(L_1L_1T_1) & =\tau^{\BK}(Q_1L_1T_1+Q_2L_1T_1-Q_1Q_2T_1)=0, \\
\tau^{\BK}(L_1T_1L_1) & =\tau^{\BK}(L_1L_2T_1+(1-\xi)L_1L_2-L_1)=1-\xi .
\end{aligned} $$
\end{exmp}

In the following we shall give our unified definition of the symmetrizing form $\tau$ on $\HH_{n,R}$.

\begin{dfn}\label{trace} For any $w\in\Sym_n$ and integers $0\leq c_1,c_2,\cdots,c_n<\ell$, we define $$
\tau(L_1^{c_1}\cdots L_n^{c_n}T_w):=\begin{cases} (1-\xi)^{n(\ell-1)-\sum_{i=1}^n c_i}, &\text{if $w=1$;}\\
    0, &\text{otherwise,}
\end{cases}
$$
where we use the convention that $0^0 :=1$ when $\xi = 1$.
We extends the map $\tau$ linearly to a $K$-linear function on $\HH_{n,R}$.
\end{dfn}
By construction, we see that $\tau$ coincides with $\tau^{\BK}$ when $\xi=1$. For any $x,y\in\HH_{n,R}$, we write $x\overset{\tau}{=}y$ whenever $\tau(x)=\tau(y)$. It is clear that $\tau$ induces a bilinear form on $\HH_{n,R}$ via: $(x,y)\mapsto\tau(xy)$, $\forall\,x,y\in\HH_{n,R}$.

Note that in the non-degenerate case, the proof that $\tau^{\MM}$ is a symmetrizing form is contained in two papers \cite{BM} and \cite{MM}. The paper \cite{BM} proved the linear form $\tau^{\MM}$ is symmetric and non-degenerate for the semisimple cyclotomic Hecke algebras, while the paper \cite{MM} proved the form $\tau^{\MM}$ is non-degenerate for the cyclotomic Hecke algebras over arbitrary ground rings and with arbitrary parameters. In this paper, we shall give a new proof that our unified definition \ref{trace} does give a symmetrizing form on $\HH_{n,R}$.

\begin{thm}\label{mainthm01}
For any $x,y\in\HH_{n,R}$, we have $\tau(xy)=\tau(yx)$. In other words, the bilinear form on $\HH_{n,R}$ induced from $\tau$ is a symmetric.
\end{thm}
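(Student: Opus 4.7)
The strategy is to reduce the identity $\tau_R(xy) = \tau_R(yx)$ to the case where one factor is a generator of $\HH_{n,R}$ and the other runs through the monomial basis of Lemma~\ref{stdBasis}. Precisely, if $\tau_R(gh)=\tau_R(hg)$ for every generator $g\in\{L_1,\dots,L_n,T_1,\dots,T_{n-1}\}$ and every $h\in\HH_{n,R}$, then writing an arbitrary $x$ as a product $g_1 g_2\cdots g_k$ of generators and iterating $\tau_R(g_1(g_2\cdots g_k y)) = \tau_R((g_2\cdots g_k y)g_1)$ gives $\tau_R(xy)=\tau_R(yx)$. The argument then splits into the two cases $g=T_i$ and $g=L_j$, and throughout exploits the observation recorded just after Definition~\ref{trace} that $\Sigma_n^{(1)}\oplus\Sigma_n^{(2)}\subseteq\ker\tau_R$.

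For $g=T_i$ and $h=L_1^{c_1}\cdots L_n^{c_n}T_w$, using that the $L_j$ pairwise commute and applying Lemma~\ref{commurel2} yields $T_i L_{i+1}^{c_{i+1}}L_i^{c_i} = L_{i+1}^{c_i}L_i^{c_{i+1}}T_i + R_i$, where $R_i$ is an $R$-linear combination of monomials $L_{i+1}^a L_i^b$ with $a,b<\ell$ and $a+b\in\{c_i+c_{i+1},\, c_i+c_{i+1}-1\}$. Computing $T_i T_w$ and $T_w T_i$ via the braid and quadratic relations, we split on $w$: if $w\notin\{1,s_i\}$ then every surviving $T$-part in $T_i h-hT_i$ is non-trivial and $\tau_R$ vanishes on each side; if $w=s_i$ then $T_i^2=\xi+(\xi-1)T_i$ produces matching contributions on both sides, because $\tau_R$ of an $L$-monomial depends only on the total exponent sum and so is unchanged by the swap $c_i \leftrightarrow c_{i+1}$; if $w=1$ then the ``main'' terms on each side carry a non-trivial $T_i$ and vanish, while the contribution of $R_i$ telescopes as
\[ -(\xi-1)(c_i-c_{i+1})(1-\xi)^m - (c_i-c_{i+1})(1-\xi)^{m+1} \;=\; 0 \]
with $m=n(\ell-1)-\sum_r c_r$, which is precisely the identity $(\xi-1)=-(1-\xi)$ applied to the two sums in Lemma~\ref{commurel2}.

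For $g=L_j$ we have $[L_j,h] = L_1^{c_1}\cdots L_n^{c_n}[L_j,T_w]$ since the $L_k$ commute, and we prove by induction on $\ell(w)$ that this element lies in $\Sigma_n^{(1)}+\Sigma_n^{(2)}$. The inductive step writes $T_w=T_{i_1}T_{w'}$ with $\ell(w')=\ell(w)-1$ and moves $L_j$ across $T_{i_1}$ via the consequences
\[ L_i T_i = T_i L_{i+1} - \bigl((\xi-1)L_{i+1}+1\bigr),\qquad L_{i+1}T_i = T_i L_i + \bigl((\xi-1)L_{i+1}+1\bigr) \]
of the defining relations (and $L_j T_i = T_i L_j$ for $j\notin\{i,i+1\}$). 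Each resulting correction $\pm((\xi-1)L_{i+1}+1)T_{w'}$, once multiplied by the ambient $L$-monomial, either has a non-trivial $T$-part (landing in $\Sigma_n^{(1)}$) or has $w'=1$ together with $L_{i+1}$-exponent strictly below $\ell-1$ (landing in $\Sigma_n^{(2)}$).

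The main obstacle is the boundary analysis in the $g=L_j$ case when the exponent of $L_{i+1}$ reaches $\ell-1$, in which case an $L_{i+1}^\ell$ appears that is not a basis monomial; one then invokes the cyclotomic relation $\prod_k(L_1-Q_k)=0$ together with the corollary of Lemma~\ref{commurel2} to rewrite $L_{i+1}^\ell$ modulo $\ker\tau_R$. The uniform normalization $(1-\xi)^{n(\ell-1)-\sum c_i}$ in Definition~\ref{trace}, together with the convention $0^0=1$ when $\xi=1$, is precisely tuned to make these cancellations proceed identically in the degenerate and non-degenerate regimes, which is what permits a single argument to cover both settings.
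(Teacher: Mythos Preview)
Your treatment of the case $g=T_i$ is essentially correct and follows the same lines as the paper. The case $g=L_j$, however, has a genuine gap.

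Write $T_w=T_{i_1}T_{w'}$ and move $L_j$ across $T_{i_1}$ as you propose. One obtains
\[
L^{\mathbf c}[L_j,T_w]
\;=\;
L^{\mathbf c}\,T_{i_1}\bigl(L_{s_{i_1}(j)}T_{w'}-T_{w'}L_j\bigr)
\;+\;\epsilon\,L^{\mathbf c}\bigl((\xi-1)L_{i_1+1}+1\bigr)T_{w'},
\]
with $\epsilon\in\{0,\pm1\}$. You discuss only the second summand. The first summand is \emph{not} covered by your induction hypothesis: the prefactor $L^{\mathbf c}T_{i_1}$ is not an $L$-monomial, and when $j\in\{i_1,i_1+1\}$ the bracket $L_{s_{i_1}(j)}T_{w'}-T_{w'}L_j$ is not of the form $[L_{j'},T_{w'}]$ because $s_{i_1}(j)\neq j$. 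Nor are $\Sigma_n^{(1)}$ and $\Sigma_n^{(2)}$ stable under left multiplication by $T_{i_1}$ (for example $T_{i_1}\cdot L^{\mathbf d}T_{s_{i_1}}$ produces a nonzero $T$-free term), so one cannot simply absorb the leading $T_{i_1}$. Your remark on the boundary case ``exponent reaches $\ell-1$'' is also too vague: the cyclotomic relation is only imposed on $L_1$, and promoting it to $L_{i+1}^\ell$ via the corollary of Lemma~\ref{commurel2} introduces additional $T_{i}$-factors, which feeds back into the very difficulty just described.

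The paper avoids all of this by noting that $\HH_{n,R}$ is already generated by $T_1,\dots,T_{n-1}$ together with the \emph{single} element $L_n$, so after Step~1 only $g=L_n$ must be checked. For $L_n$ one uses the coset decomposition $w=u\,s_{n-1}s_{n-2}\cdots s_k$ with $u\in\Sym_{n-1}$ (so $T_u$ commutes with $L_n$), freely invokes the already-proven commutation with $T_r$ to cycle $T$-factors inside $\tau_R$, and in the delicate case $a_n=\ell-1$ establishes an auxiliary family of vanishing identities by a separate induction. Your outline contains no substitute for these ingredients.
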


\begin{proof} We divide the proof into three steps:

\smallskip
\noindent
{\it Step 1.} We claim that for any $x\in\HH_{n,R}$ and $1\leq r<n$, $\tau(xT_r)=\tau(T_r x)$. To prove this claim, it suffices (by Lemma \ref{stdBasis}) to show that for any $w\in\Sym_n$, $1\leq r<n$ and integers $0\leq a_1,\cdots,a_n<\ell$, \begin{equation}\label{commu1}
L_{1}^{a_1} \cdots L_{n}^{a_n} T_w T_r\equal T_r L_{1}^{a_1} \cdots L_{n}^{a_n} T_w .
\end{equation}


Suppose $w = 1$. Then $T_{r} L_{1}^{a_1} \cdots L_{n}^{a_n}=L_{1}^{a_1} \cdots L_{r-1}^{a_{r-1}} (T_r L_{r}^{a_r} L_{r+1}^{a_{r+1}})L_{r+2}^{a_{r+2}} \cdots L_{n}^{a_{n}}$.
If $a_r\geq a_{r+1}$ then by Lemma \ref{commurel2} we have $$\begin{aligned}
T_{r} L_{1}^{a_1} \cdots L_{n}^{a_n} &= ( L_{1}^{a_1} \cdots L_{r-1}^{a_{r-1}} ) \Bigl(L_{r}^{a_{r+1}} L_{r+1}^{a_r} T_{r} - (\xi-1) \sum_{c=a_{r+1}+1}^{a_r} L_{r+1}^{c} L_{r}^{a_r+a_{r+1}-c} \\
        & \qquad\qquad - \sum_{c=a_{r+1}}^{a_r-1} L_{r+1}^{c} L_{r}^{a_r+a_{r+1}-c-1} \Bigr) L_{r+2}^{a_{r+2}} \cdots L_{n}^{a_{n}}.
\end{aligned}
$$
Note that in the first summation above, we have $c\leq a_r<\ell, a_r+a_{r+1}-c\leq a_r-1<\ell$, thus we have $$\begin{aligned}
&\quad\,-(\xi-1)\tau\Bigl(L_{1}^{a_1} \cdots L_{r-1}^{a_{r-1}}L_{r+1}^{c} L_{r}^{a_r+a_{r+1}-c}L_{r+2}^{a_{r+2}} \cdots L_{n}^{a_{n}}\Bigr)\\
&=-(\xi-1)(1-\xi)^{n(\ell-1)-\sum_{j=1}^n a_j}=(1-\xi)^{n(\ell-1)-\sum_{j=1}^n a_j+1};
\end{aligned}$$
while in the second summation above, $c\leq a_r-1<\ell-1,\quad a_r+a_{r+1}-c-1\leq a_r-1<\ell$, thus we have
$$
-\tau\Bigl(L_{1}^{a_1} \cdots L_{r-1}^{a_{r-1}}L_{r+1}^{c} L_{r}^{a_r+a_{r+1}-c-1}L_{r+2}^{a_{r+2}} \cdots L_{n}^{a_{n}}\Bigr)=-(1-\xi)^{n(\ell-1)-\sum_{j=1}^n a_j+1}.
$$
The two terms cancel off each other. It follows that $T_{r} L_{1}^{a_1} \cdots L_{n}^{a_n} \equal  L_{1}^{a_1} \cdots L_{n}^{a_n} T_{r}$ as required.
The case that $a_r \leq a_{r+1}$ can be proved in a similar way.

Suppose $1\neq w\neq s_r$. It is clear that $T_wT_r=\sum_{1\neq x\in\Sym_n}c_xT_x$. It follows  from Definition \ref{trace} that $$
\tau(L_{1}^{a_1} \cdots L_{n}^{a_n}T_w T_{r})=\sum_{1\neq x\in\Sym_n}\tau(L_{1}^{a_1} \cdots L_{n}^{a_n}c_xT_x)=0 .
$$
On the other hand, $T_rL_{1}^{a_1} \cdots L_{n}^{a_n}T_w=L_{1}^{a_1} \cdots L_{r-1}^{a_{r-1}} (T_r L_{r}^{a_r} L_{r+1}^{a_{r+1}})L_{r+2}^{a_{r+2}} \cdots L_{n}^{a_n}T_w$. Applying Lemma \ref{commurel2}, we see that $T_rL_{1}^{a_1} \cdots L_{n}^{a_n}T_w$ is spanned by some elements of the form $$
L_{1}^{c_1} \cdots L_{n}^{c_n}T_w,\quad\, L_{1}^{c_1} \cdots L_{n}^{c_n}T_rT_w,
$$
where $0\leq c_i\leq\ell-1$ for each $i$. All these terms make zero contribution to $\tau$. So we still have $T_{r} L_{1}^{a_1} \cdots L_{n}^{a_n}T_w \equal  L_{1}^{a_1} \cdots L_{n}^{a_n}T_wT_{r}$ in this case.

Suppose $w=s_r$. Then $T_wT_r=T_r^2=(\xi-1)T_r+\xi$.  It follows  from Definition \ref{trace} that $$
\tau(L_{1}^{a_1} \cdots L_{n}^{a_n}T_w T_{r})=\xi\tau(L_{1}^{a_1} \cdots L_{n}^{a_n}) .
$$
On the other hand, $T_rL_{1}^{a_1} \cdots L_{n}^{a_n}T_w=L_{1}^{a_1} \cdots L_{r-1}^{a_{r-1}} (T_r L_{r}^{a_r} L_{r+1}^{a_{r+1}})L_{r+2}^{a_{r+2}} \cdots L_{n}^{a_n}T_r$. Applying Lemma \ref{commurel2} and Definition \ref{trace}, we see that $$\begin{aligned}
\tau(T_rL_{1}^{a_1} \cdots L_{n}^{a_n}T_w)
&=\tau\Bigl(L_{1}^{a_1} \cdots L_{r-1}^{a_{r-1}} (L_{r+1}^{a_r} L_{r}^{a_{r+1}})L_{r+2}^{a_{r+2}} \cdots L_{n}^{a_n}T_r^2\Bigr)\\
&=\tau\Bigl(L_{1}^{a_1} \cdots L_{r-1}^{a_{r-1}} (L_{r+1}^{a_r} L_{r}^{a_{r+1}})L_{r+2}^{a_{r+2}} \cdots L_{n}^{a_n}((\xi-1)T_r+\xi)\Bigr)\\
&=\xi\tau\Bigl(L_{1}^{a_1} \cdots L_{r-1}^{a_{r-1}} (L_{r+1}^{a_r} L_{r}^{a_{r+1}})L_{r+2}^{a_{r+2}} \cdots L_{n}^{a_n}\Bigr)\\
&=\xi\tau(L_{1}^{a_1} \cdots L_{n}^{a_n})=\tau(L_{1}^{a_1} \cdots L_{n}^{a_n}T_w T_{r}),\end{aligned}
$$
as required. Thus we have proved that (\ref{commu1}) for any $1\leq r<n$.

\medskip
\noindent
{\it Step 2.} We claim that for any $x\in\HH_{n,R}$, $\tau(xL_n)=\tau(L_n x)$.

To prove this claim, it suffices to show that for any $w\in\Sym_n$, and integers $0\leq a_1,\cdots,a_n<\ell$, \begin{equation}\label{commu3}
L_{1}^{a_1} \cdots L_{n}^{a_n} T_w L_n\equal L_n L_{1}^{a_1} \cdots L_{n}^{a_n} T_w=L_{1}^{a_1} \cdots L_{n}^{a_n+1} T_w .
\end{equation}

If $w\in\Sym_{n-1}$ then $T_w$ commutes with $L_n$ and there is nothing to prove. Henceforth, we assume that $w\not\in\Sym_{n-1}$. We can write $w=us_{n-1}s_{n-2}\cdots s_k$ for some $1\leq k\leq n-1$ and $u\in\Sym_{n-1}$. Hence $$
T_w=T_uT_{n-1}T_{n-2}\cdots T_k .
$$
As a result,
\begin{equation}\label{eq:rel}
T_wL_n=T_uT_{n-1}L_nT_{n-2}\cdots T_k=T_u\Bigl(L_{n-1}T_{n-1}+1+(\xi-1)L_n\Bigr)T_{n-2}\cdots T_k.
\end{equation}
There are two possibilities:

\smallskip
{\it Case 1.} $0\leq a_n < \ell-1$. We have $$
\tau(L_n L_{1}^{a_1} \cdots L_{n}^{a_n} T_w)=\tau\bigl((L_{1}^{a_1} \cdots L_{n-1}^{a_{n-1}}T_u)L_{n}^{a_n+1} T_{n-1}T_{n-2}\cdots T_k\bigr)=0;
$$
%
If $u\neq 1$ or $k\leq n-2$, then by \eqref{eq:rel} $$\begin{aligned}
&\quad\,\tau(L_{1}^{a_1} \cdots L_{n}^{a_n} T_w L_n)\\
&=\tau(L_n^{a_n}L_{1}^{a_1} \cdots L_{n-1}^{a_{n-1}}T_uL_{n-1}T_{n-1}T_{n-2}\cdots T_k)+\tau(L_n^{a_n}L_{1}^{a_1} \cdots L_{n-1}^{a_{n-1}}T_uT_{n-2}\cdots T_k)+\\
&\qquad (\xi-1)\tau(L_n^{a_n+1}L_{1}^{a_1} \cdots L_{n-1}^{a_{n-1}}T_uT_{n-2}\cdots T_k)\\
&=0,
\end{aligned}
$$
where the last equality is obtained by applying the basis theorem (i.e., Lemma~\ref{stdBasis}) of $\HH_{n-1}$ to $L_{1}^{a_1} \cdots L_{n-1}^{a_{n-1}}T_uL_{n-1}$ and $L_{1}^{a_1} \cdots L_{n-1}^{a_{n-1}}T_uT_{n-2}\cdots T_k$. Assume $u=1$ and $k=n-1$. Then $w=s_{n-1}$, and $$ \begin{aligned}
\tau(L_{1}^{a_1} \cdots L_{n}^{a_n} T_w L_n)
&=\tau(L_n^{a_n}L_{1}^{a_1} \cdots L_{n-1}^{a_{n-1}})+(\xi-1)\tau(L_n^{a_n+1}L_{1}^{a_1} \cdots L_{n-1}^{a_{n-1}})\\
&=(1-\xi)^{n(\ell-1)-\sum_{j=1}^{n}a_j}+(\xi-1)(1-\xi)^{n(\ell-1)-\sum_{j=1}^{n}a_j-1}=0,
\end{aligned} $$
where the first equality follows from that $\tau (L_n^{a_n}L_{1}^{a_1} \cdots L_{n-1}^{a_{n-1}}L_{n-1}T_{n-1}) = 0$ which is obtained by applying the basis theorem (i.e. Lemma~\ref{stdBasis}) of $\HH_{n-1}$. This proves (\ref{commu3}) in Case 1..

\smallskip
{\it Case 2.} $a_n = \ell-1$. Then, by \eqref{eq:rel}, we have $$
\begin{aligned}
&\quad\, L_{1}^{a_1} \cdots L_{n}^{a_n} T_w L_n \\
&=L_{1}^{a_1} \cdots L_{n}^{\ell-1} T_u L_{n-1}T_{n-1}T_{n-2} \cdots T_{k}+(\xi -1) (L_{1}^{a_1} \cdots L_{n}^{\ell} T_u T_{n-2} \cdots T_{k}) + L_{1}^{a_1} \cdots L_{n}^{\ell-1} T_{u} T_{n-2} \cdots T_{k}\\
& \equal (\xi -1) (L_{1}^{a_1} \cdots L_{n}^{\ell} T_u T_{n-2} \cdots T_{k}) + L_{1}^{a_1} \cdots L_{n}^{\ell-1} T_{u} T_{n-2} \cdots T_{k},
\end{aligned} $$
 where the last equality is again obtained by applying the basis theorem (i.e. Lemma~\ref{stdBasis}) of $\HH_{n-1}$ to $L_{1}^{a_1} \cdots L_{n}^{\ell-1} T_u L_{n-1}$.

On the other hand, $$\begin{aligned}
      L_n L_{1}^{a_1} \cdots L_{n}^{a_n} T_w  & = L_{1}^{a_1} \cdots L_{n}^{\ell} T_{u} T_{n-1} \cdots T_{k} = L_{1}^{a_1} \cdots L_{n-1}^{a_{n-1}} T_{u} L_{n}^\ell T_{n-1} \cdots T_{k}.
     \end{aligned}
$$
Note that $L_n^\ell T_{n-1}=L_n^{\ell-1}\bigl((\xi-1)L_n+T_{n-1}L_{n-1}+1\bigr)=(\xi-1)L_n^\ell+L_n^{\ell-1}+L_n^{\ell-1}T_{n-1}L_{n-1}$. Thus in order to prove (\ref{commu3}) in Case 2, it suffices to show that
\begin{equation}\label{37}
\tau(L_{1}^{a_1} \cdots L_{n-1}^{a_{n-1}}T_{u}(L_{n}^{\ell-1}T_{n-1})L_{n-1} T_{n-2} \cdots T_{k})=0 .
\end{equation}

Note that $L_{1}^{a_1} \cdots L_{n-1}^{a_{n-1}} T_{u} \in \HH_{n-1,R}$, we can write it as an $R$-linear combination of the basis elements in $\{T_w L_1^{c_1}L_2^{c_2}\cdots L_{n-1}^{c_{n-1}}\bigm|w\in\Sym_{n-1}, 0\leq c_i<\ell,\forall\,1\leq i\leq n-1\}$. By the result we have proved in Step 1, to prove (\ref{37}) it suffices to show for any $0\leq a_1,\cdots,a_{n-1}<\ell$, $u\in \Sym_{n-1}$,
\begin{equation}\label{vanish}
\tau\bigl(L_{1}^{a_1} \cdots L_{n-1}^{a_{n-1}} L_{n}^{\ell-1}T_{n-1}L_{n-1} T_{n-2} \cdots T_{k} T_{u}\bigr)=0,
\end{equation}
To this end, we shall show that for any $1\leq a\leq\ell$ and any $0\leq t\leq\ell-a$, \begin{equation}\label{vanish2}
\tau(L_{1}^{a_1} \cdots L_{n-2}^{a_{n-2}}L_{n-1}^{\ell-a-t}L_{n}^{\ell-a}T_{n-1}L_{n-1}^{a} T_{n-2} \cdots T_{k}T_{u})=0 .
\end{equation}

We use induction on $t$. If $t=0$, then as $T_{n-1}$ commutes with $L_{n-1}L_n$, we can get that $$\begin{aligned}
&\quad\,\tau(L_{1}^{a_1} \cdots L_{n-1}^{\ell-a}(L_{n}^{\ell-a}T_{n-1}L_{n-1}^a) T_{n-2} \cdots T_{k}T_{u})\\
&=\tau(T_{n-1}L_{1}^{a_1} \cdots L_{n-1}^{\ell-a}(L_{n}^{\ell-a}L_{n-1}^a) T_{n-2} \cdots T_{k}T_{u})\\
&=\tau\bigl(L_{1}^{a_1} \cdots L_{n-1}^{\ell}T_{n-2} \cdots T_{k}T_{u}(L_{n}^{\ell-a}T_{n-1})\bigr)\quad\text{(by the result obtained in Step 1)}\\
&=0. \quad\text{(by Definition \ref{trace} and Lemma~\ref{stdBasis} for $\HH_{n-1}$)}
\end{aligned}
$$
Suppose that (\ref{vanish2}) holds for an integer $t\geq 0$. Now replacing $t$ with $t+1$, we have
$$\begin{aligned}
&\quad\,\tau(L_{1}^{a_1} \cdots L_{n-1}^{\ell-a-t-1}(L_{n}^{\ell-a}T_{n-1})L_{n-1}^{a} T_{n-2} \cdots T_{k}T_{u})\\
&=\tau(L_{1}^{a_1} \cdots L_{n-1}^{\ell-a-t-1}L_{n}^{\ell-a-1}\bigl((\xi-1)L_n+1+T_{n-1}L_{n-1})L_{n-1}^{a} T_{n-2} \cdots T_{k}T_{u})\\
&=\tau(L_{1}^{a_1} \cdots L_{n-1}^{\ell-a-t-1}L_n^{\ell-a-1}((\xi-1)L_n+1))L_{n-1}^{a} T_{n-2} \cdots T_{k}T_{u}),
\end{aligned}
$$
where the last step follows from induction hypothesis (with $a$ replaced with $a+1$).
Apply the basis theorem (i.e. Lemma~\ref{stdBasis}) for $\HH_{n-1}$ to $L_{1}^{a_1} \cdots L_{n-1}^{\ell-t-1}T_{n-2} \cdots T_{k}T_{u}$ and use definition~\ref{trace}, we have
$$
\tau(L_{1}^{a_1} \cdots L_{n-1}^{\ell-a-t-1}L_n^{\ell-a-1}((\xi-1)L_n+1))L_{n-1}^{a} T_{n-2} \cdots T_{k}T_{u})=0.
$$
as required. This proves (\ref{vanish2}) and hence (\ref{vanish}). Hence we complete the proof in the case $a_n=\ell-1$.

\medskip
\noindent
{\it Step 3.} Let $y,z\in\HH_{r,n}$. We claim that if $\tau(xy)=\tau(yx)$ and $\tau(xz)=\tau(zx)$ for any $x\in\HH_{n,R}$, then $\tau(x(yz))=\tau((yz)x)$ for any $x\in\HH_{n,R}$.

In fact, we have that $$
\tau(x(yz))=\tau((xy)z)=\tau(z(xy))=\tau((zx)y)=\tau(y(zx))=\tau((yz)x),
$$
as required.

Finally, since the Hecke algebra $\HH_{n,R}$ is generated by $T_1,T_2,\cdots,T_{n-1},L_n$, it follows from the results in Steps 1,2,3 that $\tau(xy)=\tau(yx), \forall\,x,y\in\HH_{n,R}$. In other words, the bilinear form induced from $\tau$ is a symmetric.
\end{proof}

Recall that for any $w\in\Sym_n$ and $0\leq c_1,\cdots,c_n<\ell$, we have $$
\tau^{\MM}(\mL^{c_1}_1 \cdots \mL^{c_n}_nT_w):=
\begin{cases}
1, &\text{if $w=1$ and $c_1=\cdots=c_n=0$ }\\
0, &\text{otherwise.}
\end{cases}
$$
where $\mL_1, \cdots, \mL_n$ are the Jucys-Murphy operators mentioned in Remark \ref{rem.L'}.

\begin{lem} \label{lem.taumtotau}
  Suppose $\xi\neq 1$. For any $w\in\Sym_n$ and $0\leq c_1,\cdots,c_n<\ell$, we have
  \begin{equation}\label{311}
    \tau^{\MM}(\mL^{c_1}_1 \cdots \mL_n^{c_n} T_{w})=(1-\xi)^{n(1-\ell)}\tau(\mL_1^{c_1} \cdots \mL_n^{c_n} T_{w}).
  \end{equation}
\end{lem}

\begin{proof}
  If $w \neq 1$, then $\tau(\mL^{c_1}_1 \cdots \mL^{c_n}_n T_{w})=\tau^{\MM} (\mL^{c_1}_1\cdots \mL^{c_n}_n T_{w})=0$. If $w=1$, by definition we have
  \begin{equation} \notag
    \tau^{\MM}(\mL_k)=0=(1-\xi)^{n(1-\ell)}\tau((\xi-1)L_k+1)=(1-\xi)^{n(1-\ell)}\tau(\mL_k).
  \end{equation}

We have
  $$\begin{aligned}
   \mL^{c_1}_1 \cdots \mL^{c_n}_n&=\bigl((\xi-1)L_1+1\bigr)^{c_1} \cdots \bigl((\xi-1)L_n+1\bigr)^{c_n} \\
    & = \Bigl(\sum_{k_1=0}^{c_1}\begin{pmatrix} c_1\\ k_1\end{pmatrix}(\xi-1)^{k_1}L_1^{k_1}\Bigr) \cdots \Bigl(\sum_{k_n=0}^{c_n}\begin{pmatrix} c_n\\ k_n\end{pmatrix}(\xi-1)^{k_n}L_n^{k_n}\Bigr).
  \end{aligned}$$
It follows that   $$\begin{aligned}
    \tau(\mL^{c_1}_1 \cdots \mL^{c_n}_n)&=\sum_{\substack{0\leq k_i\leq c_i,\\ 1\leq i\leq n}} \begin{pmatrix} c_1\\ k_1\end{pmatrix} \cdots\begin{pmatrix} c_n\\ k_n\end{pmatrix} (\xi-1)^{\sum_{j=1}^n k_j}(1-\xi)^{n(\ell-1)-\sum_{j=1}^n k_j}\\
    &=\sum_{\substack{0\leq k_i\leq c_i,\\ 1\leq i\leq n}} \begin{pmatrix} c_1\\ k_1\end{pmatrix} \cdots\begin{pmatrix} c_n\\ k_n\end{pmatrix} (-1)^{\sum_{j=1}^n k_j}(1-\xi)^{n(\ell-1)} \\
    & = (1-\xi)^{n(\ell-1)}\cdot \prod_{i=1}^n (1+(-1))^{c_i} \\
    & = \begin{cases} (1-\xi)^{n(\ell-1)} & \text{if }c_1=\dots=c_n=0, \\ 0 & \text{otherwise,}  \end{cases} \\
    & = (1-\xi)^{n(\ell-1)}\tau^{\MM} (\mL^{c_1}_1 \cdots \mL^{c_n}_n).
  \end{aligned}$$
This completes the proof of (\ref{311}).
\end{proof}

Recall the definition of $\mathbf{n}(\blam)$ in Definition (\ref{mnlam}).

\begin{cor}\label{invertible0} Let $\blam\in\Parts[\ell,n]$. We have $$
\tau_{R}\bigl(\fm_\blam^R T_{w_{\blam}} \fn_{\blam}^R T_{w_{\blam'}}\bigr)=(-1)^{\mathbf{n}(\blam)}\xi^{\mathbf{n}(\blam)+\ell(w_{\blam})} \prod_{s=1}^{\ell}\Bigl(Q_s(\xi-1)+1\Bigr)^{n-|\lam^{(s)}|}.
$$
\end{cor}

\begin{proof} This follows from \cite[Theorem 5.9]{Ma} and \cite[Theorem 4.7]{Zh} by taking into account of the difference between our $\fm_\blam, \fn_\blam$ with that in \cite{Ma}.
\end{proof}

Let $\O$ be an integral domain and $A$ an $\O$-algebra which is a  free $\O$-module of finite rank. Recall that $A$ is called a symmetric $\O$-algebra if there is a trace function $\tr: A\rightarrow\O$ (i.e., $\tr(hh')=\tr(h'h), \forall\,h,h'\in A$) such that the determinant of the matrix $(\tr(bb'))_{b,b'\in\mathcal{B}}$ is a unit in $\O$ for some (and hence every) $\O$-basis $\mathcal{B}$ of $A$.

The following theorem is the first main result of this paper.

\begin{thm}\label{mainthm11} Let $R$ be an integral domain.
For any $w\in\Sym_n$ and integers $0\leq c_1,c_2,\cdots,c_n<\ell$, we define $$
\tau_R(L_1^{c_1}\cdots L_n^{c_n}T_w):=\begin{cases} (1-\xi)^{n(\ell-1)-\sum_{i=1}^n c_i}, &\text{if $w=1$;}\\
        0, &\text{otherwise.}
\end{cases}
$$
We extend the map $\tau_R$ linearly to an $R$-linear function on $\HH_{n,R}$. If $Q_r(\xi-1)+1\in R^\times$ for any $1\leq r\leq\ell$, then  $\tau_R$ is a non-degenerate symmetrizing form on $\HH_{n,R}$. In other words, $\HH_{n,R}$ is a symmetric $R$-algebra.
\end{thm}

\begin{proof}
Theorem \ref{mainthm01} has proved that $\tau$ induces a symmetric bilinear form on $\HH_{n,R}$. It remains to show that there are two $R$-bases $\{b_i\}, \{b'_j\}$ of $\HH_{n,R}$ such that the determinant of the matrix $(\tr(b_ib'_j))_{i,j}$ is a unit in $R$. To this end, we consider the cellular basis $\{\fm_{\s\t}^R\}$ and the dual cellular basis $\{\fn_{\v\u}^R\}$ of $\HH_{n,R}$. It suffices to show that \begin{enumerate}
\item $\tau_R\bigl(\fm_{\s\t}^R\fn_{\t\s}^R\bigr)\in R^\times$; and
\item $\tau_R\bigl(\fm_{\s\t}^R\fn_{\v\u}^R\bigr)\neq 0$ only if $(\u,\v)\unrhd(\s,\t)$.
\end{enumerate}

Let $(\s,\t)\in\Std^2(\blam)$. For Part a), we have that $$\begin{aligned}
\tau_R\bigl(\fm_{\s\t}^R\fn_{\t\s}^R\bigr)&=\tau_R\bigl(T_{d(\s)}^*\fm_{\blam}^R T_{d(\t)}T_{d'(\t)}^*\fn_{\blam}^RT_{d'(\s)}\bigr)
=\tau_R\bigl(\fm_{\blam}^R(T_{d(\t)}T_{d'(\t)}^*)\fn_{\blam}^R(T_{d'(\s)}T_{d(\s)}^*)\bigr)\\
&=\tau_{R}\bigl(\fm_\blam^R T_{w_{\blam}} \fn_{\blam}^R T_{w_{\blam'}}\bigr)\in R^\times ,  \quad \text{(by Corollary \ref{invertible0})}
\end{aligned}
$$
as required.

It remains to prove Part b). Let $K$ be the fraction field of $R$ and $\O_K,\K$ be as in the setting before Definition~\ref{dfn:Ft}. Suppose that $(\u,\v)\ntrianglerighteq(\s,\t)$. We want to prove $\tau_R\bigl(\fm_{\s\t}^R\fn_{\v\u}^R\bigr)=0$. It is enough to show that $\tau_\O\bigl(\fm_{\s\t}^{\O}\fn_{\v\u}^{\O}\bigr)=0$ for $\O = \O_K$.

By Lemmas \ref{fsemi} and \ref{gsemi}, $$\begin{aligned}
\fm_{\s\t}^{\O}&=f_{\s\t}+\sum_{(\a,\b)\rhd(\s,\t)}r_{\a\b}^{\s\t}f_{\a\b},\\
\fn_{\v\u}^{\O}&=g_{\v\u}+\sum_{(\b,\a)\lhd(\v,\u)}\check{r}_{\b\a}^{\v\u}g_{\b\a},
\end{aligned}
$$
where $r_{\a\b}^{\s\t}, \check{r}_{\b\a}^{\v\u}\in\K$ for any pair $(\b,\a)$. By Theorem \ref{mainthm01}, $\tau(xy)=\tau(yx), \forall\,x,y\in\HH_{n}$.
Note that (cf. \cite{HW}) $g_{\s\t}=c_{\s\t}f_{\s\t}$ for some $c_{\s\t\in \K^\times}$. Combining these properties with Lemma \ref{fsemi}, we can deduce that $\tau_\K(f_{\a\b}g_{\q\p})\neq 0$ only if $\b=\q$ and $\a=\p$. Thus $\tau_R\bigl(\fm_{\s\t}^R\fn_{\v\u}^R\bigr)= 0$ since we cannot find $(\a,\b)$ such that $(\s,\t) \lhd (\a,\b) \lhd (\u,\v)$ when $(\u,\v)\ntrianglerighteq(\s,\t)$.
This completes the proof of Part b) and hence the whole theorem.
\end{proof}

\begin{dfn} Let $\O\in\{K,\O_K,\K\}$. For any $\blam\in\Parts[\ell,n]$, we define $$
z^\O_\blam:=\fm^\O_\blam T^\O_{w_{\blam}}\fn^\O_{\blam},\,\,\, z_\blam:=z_\blam^K .
$$
\end{dfn}

The element $z_\blam T_{w_{\blam'}}$ is actually a quasi-idempotent (i.e., $z_\blam T_{w_{\blam'}})^2=cz_\blam T_{w_{\blam'}}$ for some $c\in K$), which plays a key role in the study of symmetrizing form on $\HH_{n,K}$ in \cite{Ma}.

\begin{lem}\label{keylem21} Let $\blam\in\Parts[\ell,n]$. Then \begin{enumerate}
\item $z^{\O_K}_\blam T^{\O_K}_{w_{\blam'}}=\gamma_{\t_\blam}\check{\gamma}_{\t^{\blam'}}f_{\tlam,\tlam}/\gamma_{\tlam}+\sum_{\tlam\neq\t\in\Std(\blam)}a_\t f_{\tlam,\t}$, where $a_\t\in\K$ for each $\t$;
\item Let $\O\in\{K,\O_K,\K\}$. Suppose that $Q_r(\xi-1)+1\in \O^\times$ for any $1\leq r\leq\ell$. Then $0\neq z^\O_\blam T^\O_{w_{\blam'}}\notin[\HH_{n,\O},\HH_{n,\O}]$.
\end{enumerate}
\end{lem}

\begin{proof} By the same argument as \cite[Remark 3.6]{Ma}, we can get that $g_{\t_\blam\t_\blam}=\check{\gamma}_{\t^{\blam'}}f_{\t_\blam\t_\blam}/\gamma_{\t_\blam}$. Then Part 1) follows from the same proof as \cite[Proposition 3.13]{Ma} and the proof of \cite[Proposition 4.4]{Ma}. Note that the statement in \cite[Proposition 4.4]{Ma} missed the terms $\sum_{\tlam\neq\t\in\Std(\blam)}a_\t f_{\tlam,\t}$. Part 2) follows from Corollary \ref{invertible0}.
\end{proof}

For any $\blam\in\Parts[\ell,n]$, and $\gamma=(r,c,l)\in[\blam]$, we define
\begin{equation}\label{res}
\res(\gamma):=\xi^{j-i}{Q}_c+(1+{\xi}+\cdots+{\xi}^{j-i-1}) .
\end{equation}

\begin{dfn} Let $\blam\in\Parts[\ell,n]$. If the multi-set $\{\res(\gamma)|\gamma\in[\blam]\}$ is equal to the multi-set $\{\res(\gamma)|\gamma\in[\bmu]\}$, then we say that $\blam,\bmu$ are residue equivalent and denote $\blam \overset{\res}{\sim} \bmu$.
\end{dfn}

For each $\blam\in\Parts[\ell,n]$, we use $S(\blam)$ to denote the corresponding (left) cell module defined via the cellular bases $\{\fm_{\s\t}\}$ and call it the Specht module labelled by $\blam$. Using \cite[Lemma 2.9]{DR} we can deduce that $S(\blam)\cong\HH_{n,K}z_\blam$. By \cite{DM}, \cite{LM:AKblocks} and \cite{Brundan:degenCentre}, we know that $\blam,\bmu$ are residue equivalent if and only if the corresponding Specht modules $S(\blam), S(\bmu)$ of $\HH_{n,K}$ are in the same block.

\begin{prop}\label{keylem22} Suppose that $Q_r(\xi-1)+1\in K^\times$ for any $1\leq r\leq\ell$. For any $\blam,\bmu\in\Parts[\ell,n]$, the element $z_\blam T_{w_{\blam'}}+[\HH_{n,K},\HH_{n,K}]$ and $z_\bmu T_{w_{\bmu'}}+[\HH_{n,K},\HH_{n,K}]$ are $K$-linearly dependent if and only if $\blam$ and $\bmu$ are residue equivalent.
\end{prop}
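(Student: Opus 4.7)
The argument splits into the two directions, both exploiting the block decomposition of $\HH_{n,K}$ afforded by the symmetric structure of Theorem~\ref{mainthm11}.

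\emph{Only if direction.} Suppose the two classes in the cocenter are linearly dependent. By Corollary~\ref{keylem21} both are nonzero, so there exist $c_1,c_2\in K^\times$ with $c_1 z_\blam T_{w_{\blam'}}+c_2 z_\bmu T_{w_{\bmu'}}\in [\HH_{n,K},\HH_{n,K}]$. By the paragraph preceding the proposition, the $K$-blocks of $\HH_{n,K}$ correspond to residue equivalence classes on $\Parts[\ell,n]$; let $e_{[\blam]}$ be the primitive central idempotent for the block containing the Specht module $S(\blam)$. Since $\HH_{n,K} z_\blam\cong S(\blam)$ by \cite[Lemma 2.9]{DR}, we have $e_B z_\blam=0$ whenever $B\neq[\blam]$, hence $z_\blam T_{w_{\blam'}}\in e_{[\blam]}\HH_{n,K}$, and similarly $z_\bmu T_{w_{\bmu'}}\in e_{[\bmu]}\HH_{n,K}$. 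Multiplying the dependence relation by the central idempotent $e_{[\blam]}$ (which stabilises $[\HH_{n,K},\HH_{n,K}]$) yields, if $[\blam]\neq[\bmu]$, the contradiction $c_1 z_\blam T_{w_{\blam'}}\in[\HH_{n,K},\HH_{n,K}]$ with $c_1\neq 0$. Thus $[\blam]=[\bmu]$.

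\emph{If direction.} Assume $\blam\overset{\res}{\sim}\bmu$. By Theorem~\ref{mainthm11} the algebra $\HH_{n,K}$ is symmetric via $\tau_K$, and a standard argument gives $[\HH_{n,K},\HH_{n,K}]=Z(\HH_{n,K})^\perp$, producing a perfect pairing $Z(\HH_{n,K})\times\Tr(\HH_{n,K})\to K$, $(z,[h])\mapsto\tau_K(zh)$. The desired proportionality $\overline{z_\blam T_{w_{\blam'}}}=(r_\blam/r_\bmu)\overline{z_\bmu T_{w_{\bmu'}}}$ is equivalent to
$$r_\bmu\,\tau_K(z\cdot z_\blam T_{w_{\blam'}})=r_\blam\,\tau_K(z\cdot z_\bmu T_{w_{\bmu'}})\quad\text{for every }z\in Z(\HH_{n,K}).$$
The key step is the eigenvalue identity: for every central $z$, $z\cdot z_\blam T_{w_{\blam'}}=\chi_{[\blam]}(z)\,z_\blam T_{w_{\blam'}}$ with $\chi_{[\blam]}:Z(\HH_{n,K})\to K$ a character depending only on the block $[\blam]$. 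To prove it, lift to $\HH_{n,\O_K}$ and pass to the split semisimple $\HH_{n,\K}$ (Lemma~\ref{ss}). The seminormal expansion from the remark after Corollary~\ref{keylem21} exhibits $z_\blam^{\O_K} T_{w_{\blam'}}^{\O_K}$ as a $\K$-linear combination of the $f_{\t^\blam,\t}$ with $\t\in\Std(\blam)$, which all live in the $e_\blam^\K$-isotypic summand of $\HH_{n,\K}$. Hence any $\tilde z\in Z(\HH_{n,\O_K})\subseteq Z(\HH_{n,\K})=\bigoplus_\bnu\K e_\bnu^\K$ acts as the scalar $c_\blam^\K(\tilde z)\in\O_K$, integrality following from $\tau(z_\blam^{\O_K} T_{w_{\blam'}}^{\O_K})=r_\blam^{\O_K}\in\O_K^\times$ (Corollary~\ref{invertible0}); reducing modulo $x$ yields the eigenvalue identity in $\HH_{n,K}$. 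That the resulting character depends only on the block follows from the local structure of $Z(e_B\HH_{n,K})$: indecomposability of the block makes $Z(e_B\HH_{n,K})$ a finite-dimensional local $K$-algebra, and the existence of a $K$-valued character forces its residue field to be $K$, giving a unique character that must coincide for every $\blam\in B$. Applying $\tau_K$ to the eigenvalue identity gives $\tau_K(z\cdot z_\blam T_{w_{\blam'}})=\chi_{[\blam]}(z)\,r_\blam$; the analogous identity for $\bmu$ together with $\chi_{[\blam]}=\chi_{[\bmu]}$ yields the desired proportionality.

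\emph{Main obstacle.} The principal technical hurdle is to extend the eigenvalue identity from central elements in the image of $Z(\HH_{n,\O_K})\to Z(\HH_{n,K})$ to the whole of $Z(\HH_{n,K})$; a priori, the surjectivity of this specialisation map is a form of the folklore conjecture on $\dim Z(\HH_{n,K})$ mentioned in the introduction. A natural bypass is to verify only the weaker cocenter congruence $z\cdot z_\blam T_{w_{\blam'}}\equiv\chi_{[\blam]}(z)\,z_\blam T_{w_{\blam'}}\pmod{[\HH_{n,K},\HH_{n,K}]}$ directly from the cellular structure, which is already enough since $\tau_K$ annihilates commutators.
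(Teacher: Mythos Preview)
Your ``only if'' direction is essentially identical to the paper's: multiply by the block idempotent and invoke Corollary~\ref{keylem21}.

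For the ``if'' direction you have correctly isolated the crux --- that every central $u\in Z(\HH_{n,K})$ acts on $z_\blam$ by a scalar $c_\blam$ depending only on the block of $\blam$ --- but your route to it via lifting to $\HH_{n,\O_K}$ and specialising from $\HH_{n,\K}$ leaves exactly the gap you yourself flag: the argument only reaches central elements in the image of $Z(\HH_{n,\O_K})\to Z(\HH_{n,K})$, and closing that gap is tantamount to the folklore conjecture on the centre. Your proposed bypass through a cocenter congruence is stated but not executed, so as written the argument is incomplete.

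The paper avoids this obstacle with a direct argument entirely over $K$, and it is worth internalising because it is much shorter than anything involving $\O_K$. Since $z_\blam=\fm_\blam T_{w_\blam}\fn_\blam$ and $u$ is central,
\[
u\,z_\blam \;=\; \fm_\blam\,(u\,T_{w_\blam})\,\fn_\blam \;\in\; \fm_\blam\,\HH_{n,K}\,\fn_\blam \;=\; K\,z_\blam,
\]
the last equality being the one-dimensionality of $\fm_\blam\HH_{n,K}\fn_\blam$ (this is standard; it underlies \cite[Lemma~2.9]{DR} and the isomorphism $S(\blam)\cong\HH_{n,K}z_\blam$). The paper phrases this as $\End_{\HH_{n,K}}(\HH_{n,K}z_\blam)=K$. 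Thus $u$ acts on all of $S(\blam)$ by the scalar $c_\blam$, with no lifting and no appeal to the structure of $Z(\HH_{n,K})$. That $c_\blam=c_\bmu$ when $\blam,\bmu$ lie in the same block then follows by linking through common composition factors of Specht modules (your locality argument for $Z(e_B\HH_{n,K})$ also works here). The remainder --- choose $c_{\blam,\bmu}\in K^\times$ so that $\tau_K$ kills $z_\bmu T_{w_{\bmu'}}-c_{\blam,\bmu}z_\blam T_{w_{\blam'}}$, and derive a contradiction from the perfect pairing $Z(\HH_{n,K})\times\Tr(\HH_{n,K})\to K$ if this element were not a commutator --- is exactly your plan.
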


\begin{proof}

Let $e(1)$ (resp., $e(2)$) be the central primitive idempotent corresponding to the Specht module $S(\blam)$ (resp. $S(\bmu)$).

Suppose that $\blam$ and $\bmu$ are not residue equivalent. Then $S(\blam), S(\bmu)$ are not in the same block and thus $e(1)e(2)=0=e(2)e(1)$. Suppose that $$
c_1z_\blam T_{w_{\blam'}}+c_2z_\bmu T_{w_{\bmu'}}\in [\HH_{n,K},\HH_{n,K}],\,\,\text{where}\,\,c_1,c_2\in K .
$$
Note that $S(\blam)\cong\HH_{n,K}z_\blam$, $S(\bmu)\cong\HH_{n,K}z_\bmu$. Multiplying $e(1)$ and $e(2)$ on the above equality we can deduce that $c_1=c_2=0$. Hence the element $z_\blam T_{w_{\blam'}}+[\HH_{n,K},\HH_{n,K}]$ and $z_\bmu T_{w_{\bmu'}}+[\HH_{n,K},\HH_{n,K}]$ are $K$-linearly independent.

Now, suppose that $\blam$ and $\bmu$ are not residue equivalent.
Applying Corollary \ref{invertible0} and our assmption on $\xi, Q_r$, we see that $\tau_K(z_\blam T_{w_{\blam'}})\neq 0\neq\tau_K(z_\bmu T_{w_{\bmu'}})$, we can find  $c_{\blam,\bmu}\in K^\times$ such that $\tau_K\bigl(z_\bmu T_{w_{\bmu'}}-c_{\blam,\bmu} z_\blam T_{w_\blam}\bigr)=0$. We claim that
$z_\bmu T_{w_{\bmu'}}-c_{\blam,\bmu} z_\blam T_{w_\blam}\in [\HH_{n,K},\HH_{n,K}]$.

Suppose that this is not the case, i.e., $z_\bmu T_{w_{\bmu'}}-c_{\blam,\bmu} z_\blam T_{w_\blam}\notin [\HH_{n,K},\HH_{n,K}]$. Since $\HH_{n,K}$ is symmetric, we have $Z(\HH_{n,K})\cong (\Tr(\HH_{n,K}))^*=(\HH_{n,K}/[\HH_{n,K},\HH_{n,K}])^*$. Thus we can find a center element $z\in Z(\HH_{n,K})$ such that $$
\tau_K\Bigl(zz_\bmu T_{w_{\bmu'}}-c_{\blam,\bmu}zz_\blam T_{w_\blam}\Bigr)\neq 0.
$$

On the other hand, the center element $z$ acts on each Specht module $S(\blam)$ by a scalar $c_\blam$ because $\End_{\HH_{n,K}}(\HH_{n,K}z_\blam)=K$. In particular, $ zz_\blam=c_\blam z_\blam, zz_\bmu=c_\bmu z_\bmu$. Since $\HH_{n,K} z_\blam\cong S(\blam)$ and $\HH_{n,K}z_\bmu\cong S(\bmu)$ are in the same block, it follows that $c_\blam=c_\bmu$. Hence  $\tau\bigl(z(z_\bmu T_{w_{\bmu'}}-c_{\blam,\bmu} z_\blam T_{w_\blam})\bigr)=0$, which is a contradiction. This completes the proof of the proposition.
\end{proof}

\bigskip

\section{Dual bases of cyclotomic Hecke algebras}

The purpose of this section is to construct an explicit pair of dual bases for the  cyclotomic Hecke algebra of type $A$ with respect to the symmetrizing form $\tau$ introduced in the last section.

Recall the constants $\gamma_\s, \check{\gamma}_\t$ introduced in Lemmas \ref{fsemi}, \ref{gsemi}. Recall that $\HH_{n,\K}$ is the cyclotomic Hecke algebra of type $A$, defined over $\K$, with Hecke parameter and cyclotomic parameters given by (\ref{OHecke}).

\begin{lem}\text{\rm (\cite[Theorem 5.9, Proposition 6.1]{Ma}, \cite[Lemma 5.3]{Zh})}\label{tr0} For any $\blam\in\Parts[\ell,n]$, and any $\s,\t\in\Std(\blam)$, we have $$
\tau_{\K}(f_{\s\t})=\delta_{\s\t}(-1)^{\mathbf{n}(\blam)}\gamma_{\s}(\gamma_{\t_\blam}\check{\gamma}_{\t^{\blam'}})^{-1}\hat{\xi}^{{\rm n}(\blam)+\ell(w_\lam))}\prod_{s=1}^{\ell}\Bigl(\hat{Q}_s(\hat{\xi}-1)+1\Bigr)^{n-|\lam^{(s)}|}.
$$
\end{lem}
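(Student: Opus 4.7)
The plan is to establish the identity in three steps: off-diagonal vanishing, reduction of the diagonal case to the tableau $\tlam$, and a final evaluation via the quasi-idempotent $z_\blam T_{w_{\blam'}}$. For the first step, given $\s\neq\t$, I would use the factorization $f_{\s\t}=F_\s\fm_{\s\t}F_\t$ from Definition~\ref{dfn:Ft}. Since $\HH_{n,\K}$ is split semisimple (Lemma~\ref{ss}), the $F_\u$ are pairwise orthogonal idempotents: they all lie in the commutative subalgebra generated by $L_1,\ldots,L_n$, and if $c_\s(k)\neq c_\t(k)$ for some $k$, comparing the two actions of $L_k$ on the product $F_\s F_\t$ forces $F_\s F_\t=0$. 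The trace property of Theorem~\ref{mainthm01} then yields $\tau_\K(f_{\s\t})=\tau_\K(F_\s\fm_{\s\t}F_\t)=\tau_\K(F_\t F_\s\fm_{\s\t})=0$.

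For the second step (diagonal reduction), two applications of Lemma~\ref{fsemi}(1) give $f_{\s,\tlam}f_{\tlam,\s}=\gamma_{\tlam}f_{\s,\s}$ and $f_{\tlam,\s}f_{\s,\tlam}=\gamma_\s f_{\tlam,\tlam}$. Cycling under $\tau_\K$ using Theorem~\ref{mainthm01} then yields
$$\tau_\K(f_{\s,\s})=\gamma_{\tlam}^{-1}\tau_\K(f_{\s,\tlam}f_{\tlam,\s})=\gamma_{\tlam}^{-1}\tau_\K(f_{\tlam,\s}f_{\s,\tlam})=\gamma_\s\gamma_{\tlam}^{-1}\tau_\K(f_{\tlam,\tlam}).$$

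For the third step, I would combine this with the triangular expansion recorded just after Corollary~\ref{keylem21}, namely
$$z^\K_\blam T^\K_{w_{\blam'}}=\frac{\gamma_{\t_\blam}\check{\gamma}_{\t^{\blam'}}}{\gamma_{\tlam}}f_{\tlam,\tlam}+\sum_{\tlam\neq\t\in\Std(\blam)}a_\t f_{\tlam,\t}.$$
Applying $\tau_\K$, the sum on the right is killed by the first step, while by Corollary~\ref{invertible0} applied over $\K$ (with parameters $\hat\xi$ and $\hat Q_1,\ldots,\hat Q_\ell$) the trace of the left-hand side equals $r_\blam(\hat\xi,\hat Q_1,\ldots,\hat Q_\ell)$. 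A direct comparison of exponents with the formula defining $r_\blam(x)$ in this section shows this quantity equals $r_\blam(x)^{-1}$. Solving for $\tau_\K(f_{\tlam,\tlam})$ and substituting back into the formula from the second step gives the asserted identity. The main point to watch is the inverse relationship between $r_\blam(x)$ as defined here and the quantity $r_\blam$ produced by Corollary~\ref{invertible0}, which accounts for the exponent $-1$ in the final answer; everything else is routine manipulation of the seminormal relations together with the trace identity.
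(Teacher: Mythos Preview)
Your argument is correct and is essentially the explicit version of what the paper defers to its citations: the paper's ``proof'' simply invokes \cite[Theorem 5.9, Proposition 6.1]{Ma} and \cite[Lemma 5.3]{Zh} together with Corollary~\ref{keylem21}, whereas you unpack the same mechanism (off-diagonal vanishing via orthogonality of the $F_\t$, diagonal reduction via the seminormal multiplication rule of Lemma~\ref{fsemi}, and the final evaluation through the expansion of $z_\blam T_{w_{\blam'}}$ recorded after Corollary~\ref{keylem21} combined with Corollary~\ref{invertible0}). You also correctly flag the sign of the exponent in the definition of $r_\blam(x)$, which is consistent with the computation in the lemma immediately following (equation~(\ref{tr01})); this is the only subtle bookkeeping point, and you handle it properly.
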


\begin{proof} The lemma follows from \cite[Theorem 5.9, Proposition 6.1]{Ma} and \cite[Lemma 5.3]{Zh} by taking into account the scalars by which our $\fm_\blam, \fn_\blam, \gamma_\s, \check{\gamma}_\t$ differ with the notations $m_\blam, n_{\blam'}, \gamma_\s, \gamma'_\t$ in \cite{Ma} when $t\neq 1$.
\end{proof}

\begin{lem} For any $\blam\in\Parts[\ell,n]$ and $\s,\t\in\Std(\blam)$, we have \begin{equation}\label{tr01}
\tau_\K(f_{\t\s}g_{\s\t})=\tau_{\O_K}(\fm^{\O_K}_{\t\s}\fn^{\O_K}_{\s\t})=(-1)^{\mathbf{n}(\blam)}\hat{\xi}^{\mathbf{n}(\blam)+\ell(w_{\blam})} \prod_{s=1}^{\ell}\Bigl(\hat{Q}_s(\hat{\xi}-1)+1\Bigr)^{n-|\lam^{(s)}|}.
\end{equation}
\end{lem}

\begin{proof} For any $\blam\in\Parts[\ell,n]$ and $\s,\t\in\Std(\blam)$, we have $$
d'(\t)d(\t)^{-1}=w_{\blam'}=w_{\blam}^{-1},\,\quad \ell(d'(\t)+\ell(d(\t)^{-1})=\ell(w_{\blam'}).
$$
Combining this with Theorems \ref{fsemi} and \ref{gsemi} we get that $$\begin{aligned}
\tau_{\O_K}(f_{\t\s}g_{\s\t})&=\tau_{\O_K}(\fm^{\O_K}_{\t\s}\fn^{\O_K}_{\s\t})=\tau_{{\O_K}}\Bigl(\fm_\blam (T_{d(\s)}T_{d'(\s)}^*)\fn_\blam \bigl(T_{d'(\t)}T_{d(\t)}^*\bigr)\Bigr)=\tau_{{\O_K}}\bigl(\fm_\blam T_{w_{\blam}} \fn_{\blam} T_{w_{\blam'}}\bigr)\\
&=(-1)^{\mathbf{n}(\blam)}\hat{\xi}^{{\rm n}(\blam)+\ell(w_\lam)}\prod_{s=1}^{\ell}\Bigl(\hat{Q}_s(\hat{\xi}-1)+1\Bigr)^{n-|\lam^{(s)}|}
\end{aligned}
$$
This proves the lemma.
\end{proof}

To simplify the notations, we define for any $\blam\in\Parts[\ell,n]$, $$
r_\blam(x):=(-1)^{\mathbf{n}(\blam)}(x+\xi)^{-\mathbf{n}(\blam)-\ell(w_{\blam})} \prod_{s=1}^{\ell}\Bigl(\hat{Q}_s(x+\xi-1)+1\Bigr)^{|\lam^{(s)}|-n}.
$$

Recall that $\O_K:=K[x]_{(x)}$. Let $\hO_K$ be the completion of $\O_K$ with respect to its unique maximal ideal generated by $x$. Let $\hat{K}:=K((x))$ be the fraction field of $\hat{\O}_{K}$. Then $\hat{\O}_K$ has $K$ as its residue field.  As before, we can identify $f_{\s\t}, g_{\s\t}$ with $1_{\hat{K}}\otimes_{\K}f_{\s\t}, 1_{\hat{K}}\otimes_{\K}g_{\s\t},$ respectively. Note that $\{r_\blam(x)\fm_{\s\t}|\s,\t\in\Std(\blam),\blam\in\P_{\ell,n}\}$ is again a cellular basis of $\HH_{n,\O}$, and $\{r_\blam(x)f_{\s\t}|\s,\t\in\Std(\blam),\blam\in\P_{\ell,n}\}$ is the associated seminormal basis of $\HH_{n,\K}$.

\begin{lem}\text{(\cite[Theorem 6.2]{HuMathas:SeminormalQuiver})}\label{2DBases} For each $\blam\in\Parts[\ell,n]$ and $\s,\t\in\Std(\blam)$, there is a unique element $B_{\s\t}^{\hat{O}_K}\in\HH_{n,\hat{\O}_K}$ such that $$
B_{\s\t}^{\hat{O}_K}=r_\blam(x)f_{\s\t}+\sum_{\substack{\u,\v\in\Std(\bmu),\bmu\in\Parts[\ell,n]\\ (\u,\v)\rhd(\s,\t)}}r^{\s\t}_{\u\v}r_\bmu(x)f_{\u\v},
$$
where $r^{\s\t}_{\u\v}\in x^{-1}K[x^{-1}]$ for each pair $(\u,\v)$. Moreover, the set $\{B_{\s\t}^{\hat{O}_K}|\s,\t\in\Std(\blam),\blam\in\Parts[\ell,n]\}$ form an $\hat{\O}_K$-basis of $\HH_{n,\hat{\O}_K}$.
Similarly, there is a unique element $\check{B}_{\s\t}^{\hat{O}_K}\in\HH_{n,\hat{\O}_K}$ such that $$
\check{B}_{\s\t}^{\hat{O}_K}=g_{\s\t}+\sum_{\substack{\u,\v\in\Std(\bmu),\bmu\in\Parts[\ell,n] \\ (\u,\v)\lhd(\s,\t)}}\check{r}^{\s\t}_{\u\v}g_{\u\v},
$$
where $\check{r}^{\s\t}_{\u\v}\in x^{-1}K[x^{-1}]$ for each pair $(\u,\v)$. Moreover, the set $\{\check{B}_{\s\t}^{\hat{O}_K}|\s,\t\in\Std(\blam),\blam\in\Parts[\ell,n]\}$ form an $\hat{\O}_K$-basis of $\HH_{n,\hat{\O}_K}$. \end{lem}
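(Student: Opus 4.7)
The plan is to construct $B_{\s\t}^{\hat{\O}_K}$ by a triangular normalization procedure in the spirit of Kazhdan--Lusztig's canonical basis construction. The only ring-theoretic input I need is the direct sum decomposition of $K$-vector spaces
\[
\hat{K}=\hat{\O}_K\oplus x^{-1}K[x^{-1}],
\]
which lets me write any $\alpha\in\hat{K}$ as $\alpha=\alpha^++\alpha^-$ with $\alpha^+\in\hat{\O}_K$ and $\alpha^-\in x^{-1}K[x^{-1}]$, together with the closure property $x^{-1}K[x^{-1}]\cdot x^{-1}K[x^{-1}]\subset x^{-1}K[x^{-1}]$. I will only treat $B_{\s\t}^{\hat{\O}_K}$ explicitly; the construction of $\check{B}_{\s\t}^{\hat{\O}_K}$ is entirely parallel, using Lemma \ref{gsemi} in place of Lemma \ref{fsemi} and the partial order reversed.

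For uniqueness, suppose $B,B'$ both satisfy the conditions and let $D:=B-B'\in\HH_{n,\hat{\O}_K}$. Then $D=\sum_{(\u,\v)\rhd(\s,\t)}d_{\u\v}f_{\u\v}$ with every $d_{\u\v}\in x^{-1}K[x^{-1}]$. Assume $D\neq 0$ and pick a \emph{minimal} pair $(\u_0,\v_0)$ in the $\rhd$-support of $D$. Substituting the triangular relation $f_{\u\v}=\fm_{\u\v}^{\hat{\O}_K}+\sum_{(\p,\q)\rhd(\u,\v)}b^{\u\v}_{\p\q}\fm_{\p\q}^{\hat{\O}_K}$ from Lemma \ref{fsemi}(3) and reading off the coefficient of $\fm_{\u_0\v_0}^{\hat{\O}_K}$: only pairs $(\u,\v)$ in the support with $(\u_0,\v_0)\trianglerighteq(\u,\v)$ can contribute, and by minimality the only such pair is $(\u_0,\v_0)$ itself. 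Hence this coefficient is exactly $d_{\u_0\v_0}\in x^{-1}K[x^{-1}]\setminus\{0\}$. But $D\in\HH_{n,\hat{\O}_K}$ forces it to lie in $\hat{\O}_K$, contradicting $\hat{\O}_K\cap x^{-1}K[x^{-1}]=\{0\}$.

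For existence, I induct on $(\s,\t)$ in a decreasing linear extension of $\rhd$. The base case (no $(\u,\v)\rhd(\s,\t)$) is immediate, since Lemma \ref{fsemi}(3) then reads $\fm_{\s\t}^{\hat{\O}_K}=f_{\s\t}$ so I take $B_{\s\t}^{\hat{\O}_K}:=\fm_{\s\t}^{\hat{\O}_K}$. In the inductive step, I first invert the triangular relation given inductively by the previously-built $B^{\hat{\O}_K}_{\u\v}$ to obtain $f_{\u\v}=B_{\u\v}^{\hat{\O}_K}+\sum_{(\p,\q)\rhd(\u,\v)}s^{\u\v}_{\p\q}B_{\p\q}^{\hat{\O}_K}$ with $s^{\u\v}_{\p\q}\in x^{-1}K[x^{-1}]$; this closure holds because the inverse of a unitriangular matrix $I+N$ with $N$ strictly upper triangular and entries in $x^{-1}K[x^{-1}]$ is a finite sum $\sum_k(-N)^k$ whose entries remain in $x^{-1}K[x^{-1}]$ by the closure property. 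Substituting this into $\fm_{\s\t}^{\hat{\O}_K}=f_{\s\t}+\sum_{(\u,\v)\rhd(\s,\t)}\alpha_{\u\v}f_{\u\v}$ from Lemma \ref{fsemi}(3) produces
\[
\fm_{\s\t}^{\hat{\O}_K}=f_{\s\t}+\sum_{(\u,\v)\rhd(\s,\t)}\beta_{\u\v}B_{\u\v}^{\hat{\O}_K}
\]
with $\beta_{\u\v}\in\hat{K}$, and I set
\[
B_{\s\t}^{\hat{\O}_K}:=\fm_{\s\t}^{\hat{\O}_K}-\sum_{(\u,\v)\rhd(\s,\t)}\beta^+_{\u\v}B_{\u\v}^{\hat{\O}_K}.
\]
Each $\beta^+_{\u\v}\in\hat{\O}_K$, so $B_{\s\t}^{\hat{\O}_K}\in\HH_{n,\hat{\O}_K}$. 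Re-expanding in the $f$-basis gives $B_{\s\t}^{\hat{\O}_K}=f_{\s\t}+\sum\beta^-_{\u\v}B_{\u\v}^{\hat{\O}_K}$; then expanding each $B_{\u\v}^{\hat{\O}_K}$ via the inductive hypothesis, every off-diagonal coefficient of $f_{\p\q}$ is a finite sum of products of elements of $x^{-1}K[x^{-1}]$ and so remains in $x^{-1}K[x^{-1}]$. That the $B^{\hat{\O}_K}_{\s\t}$ form an $\hat{\O}_K$-basis of $\HH_{n,\hat{\O}_K}$ is automatic, since the change of basis back to $\{\fm^{\hat{\O}_K}_{\s\t}\}$ is unitriangular with entries in $\hat{\O}_K$ by construction.

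The main obstacle will be bookkeeping: tracking the positive and negative Laurent parts across iterated triangular substitutions, matrix inversions and index relabellings. All of this reduces to verifying that $x^{-1}K[x^{-1}]$ is closed under the finitely many ring operations that arise, which follows from $x^{-1}K[x^{-1}]\cdot x^{-1}K[x^{-1}]\subset x^{-1}K[x^{-1}]$ together with the nilpotency of strictly-upper-triangular off-diagonal corrections, but it needs to be done carefully.
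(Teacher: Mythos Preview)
Your argument is correct and is precisely the Kazhdan--Lusztig style construction that underlies \cite[Theorem 6.2]{HuMathas:SeminormalQuiver}; the paper itself gives no independent proof but simply invokes that reference for the first part and observes that the second part follows by replacing $(f_{\s\t},\fm_{\s\t}^{\hat\O_K},\rhd)$ with $(g_{\s\t},\fn_{\s\t}^{\hat\O_K},\lhd)$. One small remark: when you appeal to Lemma~\ref{fsemi}(3) you are implicitly using the \emph{exact} triangular expansion $f_{\s\t}=\fm_{\s\t}+\sum_{(\u,\v)\rhd(\s,\t)}b^{\s\t}_{\u\v}\fm_{\u\v}$ over \emph{all} pairs, whereas the lemma as stated is only a congruence modulo $(\HH_{n,\K})^{\rhd\blam}$ with the sum restricted to $\u,\v\in\Std(\blam)$; this lift is of course immediate since $(\HH_{n,\K})^{\rhd\blam}$ is spanned by basis elements indexed by pairs strictly above $(\s,\t)$, but it is worth saying explicitly.
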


\begin{proof} The first part follows from \cite[Theorem 6.2]{HuMathas:SeminormalQuiver}. Replacing $\{r_\blam(x)f_{\s\t}\}, \{r_\blam(x)\fm^{\hat{\O}_K}_{\s\t}\}$ with $\{g_{\s\t}\}, \{\fn_{\s\t}^{\hat{\O}_K}\}$ respectively, we can prove the second part of the lemma in a similar way as the proof of   \cite[Theorem 6.2]{HuMathas:SeminormalQuiver}.
\end{proof}

\begin{dfn} We define \begin{equation}\label{DBasis}
B_{\s\t}^K:=1_K\otimes_{\hat{O}_K}B_{\s\t}^{\hat{O}_K},\quad \check{B}_{\s\t}^K:=1_K\otimes_{\hat{O}_K}\check{B}_{\s\t}^{\hat{O}_K},
\end{equation}
For $R\in\{\hO_K, K\}$, we call $\{B_{\s\t}^R|\s,\t\in\Std(\blam),\blam\in\Parts[\ell,n]\}$ the {\it distinguished $R$-basis} of $\HH_{n,R}$, and $\{\check{B}^R_{\s\t}|\s,\t\in\Std(\blam),\blam\in\Parts[\ell,n]\}$ the {\it dual distinguished $R$-basis} of $\HH_{n,R}$.
\end{dfn}

The following theorem is the second main result of this paper.

\begin{thm}\label{mainthm22} Suppose that \begin{equation}\label{invert1}
Q_r(\xi-1)+1\in K^\times,\quad\forall\, 1\leq r\leq\ell .
\end{equation}

1) The $\hat{\O}_K$-algebra $\HH_{n,\hat{\O}_K}$ is a symmetric $\hat{\O}_K$-algebra with symmetrizing form $\tau_{\hO_K}$, and the two $\hat{O}_K$-bases $\{B_{\s\t}^{\hat{\O}_K}|\s,\t\in\Std(\blam),\blam\in\Parts[\ell,n]\}$ and $$
\Biggl\{\check{B}_{\t\s}^{\hat{\O}_K}\Biggm|\s,\t\in\Std(\blam),\blam\in\Parts[\ell,n]\Biggr\}
$$ are dual to each other under $\tau_{\hat{\O}_K}$.

2) The $K$-algebra $\HH_{n,K}$ is a symmetric $K$-algebra with symmetrizing form $\tau_K$, and the two $K$-bases $\{B_{\s\t}^{K}|\s,\t\in\Std(\blam),\blam\in\Parts[\ell,n]\}$ and $$
\Biggl\{\check{B}_{\t\s}^{K}\Biggm|\s,\t\in\Std(\blam),\blam\in\Parts[\ell,n]\Biggr\}
$$ are dual to each other under $\tau_{K}$.
\end{thm}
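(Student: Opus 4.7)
My strategy is to prove Part~1 first (over $\hat{\O}_K$) and obtain Part~2 as its reduction modulo~$x$.

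\emph{Symmetrizing property.} Apply Theorem~\ref{mainthm11} with $R=\hat{\O}_K$, respectively $R=K$. Over $K$, the hypothesis~(\ref{invert1}) is exactly what is required. Over $\hat{\O}_K$, the element $\hat{Q}_r(\hat{\xi}-1)+1$ reduces at $x=0$ to $Q_r(\xi-1)+1$, which lies in $K^\times$ by~(\ref{invert1}); since $\hat{\O}_K$ is a local ring with residue field $K$, this suffices to conclude it is a unit. Hence both $\HH_{n,\hat{\O}_K}$ and $\HH_{n,K}$ are symmetric algebras.

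\emph{Seminormal computation.} Extending scalars to $\hat{K}$ and expanding $B_{\s\t}^{\hat{\O}_K}$ and $\check{B}_{\v\u}^{\hat{\O}_K}$ via Lemma~\ref{2DBases}, Lemmas~\ref{fsemi}, \ref{gsemi} together with~(\ref{tr01}) give $\tau_{\hat{K}}(f_{\a\b}g_{\u\v})=\delta_{\b\u}\delta_{\a\v}\,r_\bnu(x)^{-1}$ for $(\a,\b),(\u,\v)\in\Std^2(\bnu)$. Using the symmetry of the dominance order in the two entries (so that $(\b,\a)\unlhd(\v,\u)\iff(\a,\b)\unlhd(\u,\v)$), the pairing collapses to
\[
  \tau_{\hat{\O}_K}\bigl(B_{\s\t}^{\hat{\O}_K}\cdot r_\bmu(x)\,\check{B}_{\v\u}^{\hat{\O}_K}\bigr)\;=\;r_\bmu(x)\!\!\sum_{(\s,\t)\unlhd(\a,\b)\unlhd(\u,\v)}\!\!r^{\s\t}_{\a\b}\,\check{r}^{\v\u}_{\b\a}\,r_\bnu(x)^{-1}.
\]
When $(\s,\t,\blam)=(\u,\v,\bmu)$, only the singleton $(\a,\b)=(\s,\t)$ survives and the pairing is $r_\blam(x)\cdot 1\cdot 1\cdot r_\blam(x)^{-1}=1$. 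When $(\s,\t)\not\unlhd(\u,\v)$ (incomparable or $(\s,\t)\rhd(\u,\v)$), the range of summation is empty and the pairing vanishes.

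\emph{Strictly-less case via uniqueness.} The essential remaining case is $(\s,\t)\lneq(\u,\v)$. Here I invoke the uniqueness statement of Lemma~\ref{2DBases}. By non-degeneracy of $\tau_{\hat{\O}_K}$, the basis $\{B_{\s\t}^{\hat{\O}_K}\}$ admits a unique dual basis $\{B_{\s\t}^\vee\}$ in $\HH_{n,\hat{\O}_K}$. Writing $B_{\s\t}^{\hat{\O}_K}=\sum P^{\s\t}_{\a\b}f_{\a\b}$, the matrix $P$ is unipotent upper-triangular in the dominance order with strict-upper entries in $x^{-1}K[x^{-1}]$; since $x^{-1}K[x^{-1}]$ is closed under multiplication, so is $P^{-1}$. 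Because $\{r_\bnu(x)g_{\b\a}\}$ is the $\hat{K}$-dual of $\{f_{\a\b}\}$, this gives an explicit $\hat{K}$-expansion of $B_{\s\t}^\vee$ in the dual seminormal basis. One then verifies that $B_{\s\t}^\vee/r_\blam(x)$ lies in $\HH_{n,\hat{\O}_K}$ and matches the triangular-with-strict-negative-poles form characterizing $\check{B}_{\t\s}^{\hat{\O}_K}$ in Lemma~\ref{2DBases}; uniqueness then forces $B_{\s\t}^\vee = r_\blam(x)\check{B}_{\t\s}^{\hat{\O}_K}$, yielding the duality.

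\emph{Descent to $K$ and main obstacle.} Part~2 is immediate: the residue map $\hat{\O}_K\twoheadrightarrow K$, $x\mapsto 0$, carries $B_{\s\t}^{\hat{\O}_K}\mapsto B_{\s\t}^K$ and $\check{B}_{\s\t}^{\hat{\O}_K}\mapsto\check{B}_{\s\t}^K$ by~(\ref{DBasis}), $r_\blam(x)\mapsto r_\blam$, and $\tau_{\hat{\O}_K}\mapsto\tau_K$, so the $\hat{\O}_K$-duality specializes to the $K$-duality. The delicate step in the whole argument is the strict-pole verification above: one must control the interplay of the unit ratios $r_\bnu(x)/r_\blam(x)\in\hat{\O}_K^\times$ with the $x^{-1}K[x^{-1}]$ entries of $P^{-1}$, so that the expansion coefficients of $B_{\s\t}^\vee/r_\blam(x)$ in the $g$-basis remain in $x^{-1}K[x^{-1}]$ — a cancellation ultimately enforced by the integrality $B_{\s\t}^\vee\in\HH_{n,\hat{\O}_K}$.
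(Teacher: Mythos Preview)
Your overall architecture---prove Part~1 over $\hat\O_K$ and then specialize modulo $x$ to get Part~2---matches the paper exactly, as do your verification of the symmetrizing property via Theorem~\ref{mainthm11} and your treatment of the diagonal and incomparable cases of the pairing via the seminormal expansion.

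Where you diverge is in the ``strictly-less'' case $(\s,\t)\lhd(\u,\v)$. The paper does \emph{not} take a detour through the dual basis $B_{\s\t}^\vee$ and the uniqueness characterization of $\check B$. It argues directly on the pairing value itself: $\tau_{\hat\O_K}\bigl(B_{\s\t}^{\hat\O_K}\cdot r_\bmu(x)\,\check B_{\v\u}^{\hat\O_K}\bigr)$ lies in $\hat\O_K$ by integrality of both factors, and from the seminormal expansions of Lemma~\ref{2DBases} together with the duality $\tau_{\hat K}(r_\blam(x)f_{\s\t}g_{\u\v})=\delta_{\t\u}\delta_{\s\v}$ one argues it also lies in $x^{-1}K[x^{-1}]$; the intersection $\hat\O_K\cap x^{-1}K[x^{-1}]=\{0\}$ then forces vanishing. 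This is a single clean step applied to a scalar, not a coefficient-by-coefficient verification inside the algebra.

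Your uniqueness route does not sidestep the difficulty---it relocates it and leaves it open. To invoke the uniqueness of $\check B_{\t\s}$ in Lemma~\ref{2DBases} you must check that every $g$-expansion coefficient of $B_{\s\t}^\vee/r_\blam(x)$ lies in $x^{-1}K[x^{-1}]$, and those coefficients are exactly $(P^{-1})_{(\a\b),(\s\t)}\cdot r_{\Shape(\a)}(x)/r_\blam(x)$. Your final paragraph correctly isolates the obstruction (the unit ratios $r_{\Shape(\a)}(x)/r_\blam(x)$ are in $\O_K^\times$ but not in $K[x^{-1}]$, so multiplying by them does not preserve $x^{-1}K[x^{-1}]$), but ``cancellation ultimately enforced by the integrality $B_{\s\t}^\vee\in\HH_{n,\hat\O_K}$'' is a hope, not an argument: integrality only pins down the $\fn$-basis coefficients in $\hat\O_K$, and that does not by itself force the $g$-basis coefficients into $x^{-1}K[x^{-1}]$. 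As written, this step is a genuine gap. The paper's direct route---working with the scalar pairing rather than with expansion coefficients of an unknown element---is both shorter and the intended mechanism; you should abandon the uniqueness detour and instead carry the $\hat\O_K\cap x^{-1}K[x^{-1}]=\{0\}$ argument through on $\tau(B_{\s\t}\cdot r_\bmu(x)\check B_{\v\u})$ itself.
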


\begin{proof} Since $\tau_K=1_K\otimes_{\hO_K}\tau_{\hO_K}$, Part 2) is consequence of Part 1). It remains to prove Part 1) of the theorem.

Recall that for any $\blam\in\Parts[\ell,n]$, $$
r_\blam(x)=(-1)^{\mathbf{n}(\blam)}(x+\xi)^{-\mathbf{n}(\blam)-\ell(w_{\blam})} \prod_{s=1}^{\ell}\Bigl(\hat{Q}_s(x+\xi-1)+1\Bigr)^{|\lam^{(s)}|-n}.
$$
The assumption (\ref{invert1}) implies that $r_\blam(x)\in\O_K^\times$.
By Lemma \ref{tr0}, for any $\blam,\bmu\in\Parts[\ell,n]$, $\s,\t\in\Std(\blam)$ and $\u,\v\in\Std(\bmu)$, we have that $$
\tau_{\hat{K}}\bigl(r_\blam(x)f_{\s\t}g_{\u\v}\bigr)=\tau_{\hat{K}}\bigl(r_\blam(x)g_{\u\v}f_{\s\t}\bigr)=\delta_{\t\u}\delta_{\s\v}.
$$
Now Part a) of the proposition follows from the above equality, Lemma~\ref{2DBases} and the fact that ${\hat{\O}}_K\cap x^{-1}K[x^{-1}]=\{0\}$. This completes the proof of the theorem.
\end{proof}

\bigskip

\section{Cocenters of the cyclotomic Schur algebras}

In this section, we shall study the cyclotomic Schur algebra corresponding to the cyclotomic Hecke algebra $\HH_{n,R}$ of type $A$. The main result is Theorem \ref{mainthm33}, which gives an explicit integral basis for the cocenter of the cyclotomic Schur algebra.

Let $R$ be an integral domain, $\xi\in R^\times$ and $Q_1,\cdots,Q_\ell\in R$. Let $\HH_{n,R}$ be the cyclotomic Hecke algebra of type $A$, defined over $R$, with Hecke parameter $\xi$ and cyclotomic parameters $Q_1,\cdots,Q_\ell$.
Let $\mC$ is a subset of the set of all multicompositions of $n$ which have $\ell$ components such that if $\bmu\in\mC$
and $\bnu\in\Parts[\ell,n]$ with $\bnu\rhd\bmu$ then $\bnu\in\mC$. We let $\mC^+$ be the set of multipartitions in $\mC$. By definition \ref{CSchur}, $$
S_{n,R}:=\End_{\HH_{n,R} }\Bigl(\bigoplus_{\bmu\in\mC}\fm_\bmu^{R}\HH_{n,R} \Bigr).
$$

\begin{dfn}\label{dfn:cenopr} Let $\blam\in\mC$. For each $z\in Z(\HH_{n,R})$, we use $\phi_{\blam\blam}^z$ to denote the $\HH_{n,R}$-endomorphism of $\fm_\blam^{R}\HH_{n,\hO}$ given by left multiplication with $z$. We extend $\phi_{\blam\blam}^z$ to an element of $S_{n,R}$ by defining $\phi_{\blam\blam}^z$ to be zero on $\fm_{\brho}^{R}\HH_{n,R}$ whenever $\blam\neq\brho\in\mC$.
\end{dfn}

The following lemma is usually referred as Schur-Weyl duality or double centralizer property between $S_{n,R}$ and $\HH_{n,R}$.

\begin{lem}\text{\rm (\cite[Theorem 5.3]{Ma04})}\label{SW} We have $$
S_{n,R}=\End_{\HH_{n,R}}\Bigl(\bigoplus_{\bmu\in\mC}\fm_{\bmu}^R\HH_{n,R}\Bigr),\quad (\HH_{n,R})^{\rm{op}}=\End_{S_{n,R}}\Bigl(\bigoplus_{\bmu\in\mC}\fm_{\bmu}^R\HH_{n,R}\Bigr).
$$
\end{lem}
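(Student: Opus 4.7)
The first equality $S_{n,R}=\End_{\HH_{n,R}}(M)$, where $M:=\bigoplus_{\bmu\in\mC}\fm_\bmu^R\HH_{n,R}$, is just the definition of $S_{n,R}$ given in Definition~\ref{CSchur}, so nothing to prove there; the real content is the second equality. The plan is to verify that the natural $R$-algebra homomorphism
$$
\psi\colon \HH_{n,R}^{\op}\longrightarrow \End_{S_{n,R}}(M),\qquad \psi(h)(m):=mh,
$$
is a bijection. It is well-defined because $S_{n,R}$ acts on $M$ by $\HH_{n,R}$-linear endomorphisms, so right multiplication by any $h\in\HH_{n,R}$ automatically commutes with the left $S_{n,R}$-action.

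For injectivity I would invoke the hypothesis $\omega\in\mC^+$: with $\omega=(\emptyset,\ldots,\emptyset,(1^n))$ the group $\Sym_\omega$ is trivial and the cyclotomic product in Definition~\ref{mnlam} is empty, so $\fm_\omega^R=1$ and $\fm_\omega^R\HH_{n,R}=\HH_{n,R}$ is a genuine direct summand of $M$. Then $\psi(h)=0$ applied to $1\in\HH_{n,R}\subset M$ forces $h=0$.

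For surjectivity, given $\phi\in\End_{S_{n,R}}(M)$, I would first use that the idempotent $\phi^1_{\omega\omega}\in S_{n,R}$ is the projection onto the summand $\HH_{n,R}$: because $\phi$ commutes with $\phi^1_{\omega\omega}$, it restricts to an $R$-linear endomorphism of $\HH_{n,R}$. Lemma~\ref{varphim1} identifies the corner algebra $\phi^1_{\omega\omega}S_{n,R}\phi^1_{\omega\omega}$ with $\HH_{n,R}$ acting on this summand by left multiplication, so $\phi|_{\HH_{n,R}}$ must commute with every left multiplication by $\HH_{n,R}$ and therefore equals right multiplication by the fixed element $h:=\phi(1)\in\HH_{n,R}$.

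It then remains to propagate this equality off the $\omega$-summand, and the plan here is to show $M=S_{n,R}\cdot 1$. For any $\bmu\in\mC$, every $\blam\in\Parts[\ell,n]$ with $\blam\unrhd\bmu$ automatically lies in $\mC^+$ by the closure assumption on $\mC$, so for each pair $(\bS,\t)$ with $\bS\in\mT(\blam,\bmu)$ and $\t\in\Std(\blam)$ the element $\varphi^R_{\bS,\omega(\t)}\in S_{n,R}$ is legitimate and sends $1\in\HH_{n,R}$ to the Murphy-type element $\fm^R_{\bS,\omega(\t)}$. The main obstacle I anticipate is showing that, as $\bmu$, $\blam$, $\bS$, $\t$ vary, these elements actually span $\fm_\bmu^R\HH_{n,R}$; this is the right-ideal version of the Dipper--James--Mathas cellular basis theorem, and I would either cite \cite{DJM2} or deduce it from the cellular basis $\{\fm_{\s\t}^R\}$ by a dominance-order triangularity argument using the two-sided ideals $\HH_{n,R}^{\rhd\blam}$. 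Once $M=S_{n,R}\cdot 1$ is established, every $m=s\cdot 1\in M$ satisfies
$$
\phi(m)=s\cdot\phi(1)=s\cdot(1\cdot h)=(s\cdot 1)h=mh=\psi(h)(m),
$$
so $\phi=\psi(h)$ and surjectivity follows.
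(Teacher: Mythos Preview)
The paper does not prove this lemma at all; it is stated as a citation of \cite[Theorem~5.3]{Ma04} and used as a black box. So there is no ``paper's own proof'' to compare against.

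Your direct argument is essentially the standard proof of the double centraliser property and is correct. A few remarks that tighten it:
\begin{itemize}
\item The hypothesis $\omega\in\mC^+$ together with the saturation condition on $\mC$ actually forces $\mC^+=\Parts[\ell,n]$, since every multipartition dominates $\omega$. This removes any worry about whether a given $\blam$ with $\mT(\blam,\bmu)\neq\emptyset$ lies in $\mC^+$.
\item The spanning step $M=S_{n,R}\cdot 1$ is exactly \cite[Proposition~4.9 / Corollary~4.10]{DJM2}: the elements $\fm^R_{\bS,\omega(\t)}$ with $\bS\in\mT(\blam,\bmu)$, $\t\in\Std(\blam)$, $\blam\in\Parts[\ell,n]$ form an $R$-basis of $\fm_\bmu^R\HH_{n,R}$. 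Since $\varphi^R_{\bS,\omega(\t)}(1)=\fm^R_{\bS,\omega(\t)}$, this gives the required cyclicity immediately, so the ``obstacle'' you flag is already handled by the reference you name.
\end{itemize}
With these two points your proof goes through cleanly and is a self-contained substitute for the citation the paper relies on.
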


\begin{cor}\label{keycor22} There is an $R$-algebra isomorphism $\theta_R: Z(\HH_{n,R})\cong Z(S_{n,R})$ such that $$
\theta_R(z)=\sum_{\bmu\in\mC}\phi_{\bmu\bmu}^z,\,\,\,\forall\,z\in Z(\HH_{n,R}).
$$
\end{cor}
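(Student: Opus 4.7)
The strategy will be to view $\theta_R(z)$ as the action of $z$ by multiplication on the bimodule $M:=\bigoplus_{\bmu\in\mC}\fm_\bmu^R\HH_{n,R}$ and then invert this map using the double centralizer property (Lemma \ref{SW}). First I would observe that for $z\in Z(\HH_{n,R})$, left multiplication by $z$ preserves each $\fm_\bmu^R\HH_{n,R}$ (because $z\fm_\bmu^R=\fm_\bmu^R z$) and commutes with the right $\HH_{n,R}$-action, so each $\phi_{\bmu\bmu}^z$ lies in $S_{n,R}$ and extension by zero to the other summands makes sense. Centrality of $z$ also ensures that on $\fm_\bmu^R\HH_{n,R}$ the endomorphism $\phi_{\bmu\bmu}^z$ coincides with right multiplication by $z$, from which it follows that for any $\psi\in S_{n,R}$ and $m\in M$ one has $\psi(\theta_R(z)m)=\psi(mz)=\psi(m)z=\theta_R(z)\psi(m)$, so $\theta_R(z)\in Z(S_{n,R})$. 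The multiplicativity of $\theta_R$ is then a routine consequence of associativity.

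Next I would establish injectivity using the hypothesis $\omega=(\emptyset,\cdots,\emptyset,(1^n))\in\mC^+$. A direct inspection of Definition \ref{mnlam} gives $\fm_\omega=1$, so the $\omega$-summand of $M$ is a copy of $\HH_{n,R}$ itself as a right module, and if $\theta_R(z)=0$ then evaluating at $1$ forces $z=0$.

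The main step will be surjectivity. Given $\phi\in Z(S_{n,R})$, I would regard it as an $R$-linear endomorphism of $M$ and exploit the fact that, because $\phi$ commutes with every element of $S_{n,R}$, $\phi$ lies in $\End_{S_{n,R}}(M)$ for the natural left $S_{n,R}$-action. By Lemma \ref{SW} this endomorphism ring is $(\HH_{n,R})^{\op}$ acting via right multiplication on $M$, so $\phi(m)=mz$ for a uniquely determined $z\in\HH_{n,R}$. The main technical hurdle is then to check that $z$ is central: using that $\phi$ is simultaneously right $\HH_{n,R}$-linear as an element of $S_{n,R}$, one obtains the identity $mhz=\phi(mh)=\phi(m)h=mzh$ for all $m\in M$ and $h\in\HH_{n,R}$, and faithfulness (again supplied by the $\omega$-summand) forces $hz=zh$, so $z\in Z(\HH_{n,R})$. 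Comparing formulas then gives $\phi=\theta_R(z)$. The hardest part of the argument is being sufficiently careful about the two compatible $\HH_{n,R}$-linearities on $\phi$ to extract a \emph{central} element from the double centralizer identification, rather than an arbitrary element of $\HH_{n,R}$.
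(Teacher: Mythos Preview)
Your proof is correct and follows essentially the same approach as the paper's: the paper observes in one sentence that both $Z(\HH_{n,R})$ and $Z(S_{n,R})$ can be identified, via Lemma~\ref{SW}, with the $R$-linear endomorphisms of $M=\bigoplus_{\bmu\in\mC}\fm_{\bmu}^R\HH_{n,R}$ that commute with both the right $\HH_{n,R}$-action and the left $S_{n,R}$-action, and your argument is precisely a careful unpacking of this bi-commutant identification. Your use of the $\omega$-summand (with $\fm_\omega=1$) to extract faithfulness and centrality is exactly the mechanism implicit in the paper's one-line justification.
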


\begin{proof} This follows from Lemma \ref{SW} because both $Z(\HH_{n,R})$ and $Z(S_{n,R})$ are identified with those $R$-linear endomorphism of $\bigoplus_{\bmu\in\mC}\fm_{\bmu}^R\HH_{n,R}$ which commutes with both the right action of $\HH_{n,R}$ and the left action of $S_{n,R}$.
\end{proof}

For any $R$-algebra $A$, we define the cocenter $\Tr(A)$ of $A$ to be the $R$-module $\Tr(A):=A/[A,A]$, where $[A,A]$ is the $R$-submodule of $A$ generated by all commutators of the form $xy-yx$ for $x,y\in A$. Note that $\Tr(A)$ is the $0$-th Hochschild homology ${\rm H\!H}_0(A)$ of $A$. In order to study the cocenter of $S_{n,R}$, we need to recall the seminormal basis theory for the cyclotomic Schur algebra $S_{n}$ which was developed in \cite{MathasSeminormal} in a more general setup.

\begin{dfn}\text{(\cite[Lemma 3.3]{JM})}  Let $1\leq i\leq n$ and $k\in\{1,2,\cdots,\ell\}$. Let $\bmu\in\mC$ and $\t^\bmu=(\t^{\mu^{(1)}},\cdots,\t^{\mu^{(\ell)}})$.
Assume $a,a+1,\cdots,z$ are the entries in row $i$ of $\t^{\mu^{(k)}}$. We set $$
L_{i,k}^{\bmu}:=L_a+L_{a+1}+\cdots+L_z,
$$
and define $\mathcal{L}_{i,k}^{\bmu}$ to be the element in $\Hom_{\HH_{n}}\bigl(\fm_{\bmu}\HH_{n},\fm_{\bmu}\HH_{n}\bigr)$ given by $$
\mathcal{L}_{i,k}^{\bmu}(\fm_{\bmu} h):=L_{i,k}^{\bmu}\fm_{\bmu} h=(L_a+L_{a+1}+\cdots+L_z)\fm_{\bmu} h,\,\,\,\forall\,h\in\HH_{n}.
$$
\end{dfn}
Note that our definition above is slightly different with \cite[Lemma 3.3]{JM} as the Jucys-Murphy operators used in \cite[Lemma 3.3]{JM} are $\mathcal{L}_1,\cdots,\mathcal{L}_n$ in our notations. However, this difference will not affect our following discussion (as we have changed the definitions of $\cont(\gamma), \cont_{\bT}(i,k)$ accordingly). We call $\{\mathcal{L}_{i,k}^{\bmu}\}$ the {\bf Jucys -Murphy operators} of $S_{n,R}$. By \cite[Corollary 2.3]{DJ2}, we know that $L_a+L_{a+1}+\cdots+L_z$ commutes with any element in the Iwahori-Hecke algebra $\HH_{\xi}(\Sym_\bmu)$ associated to the parabolic subgroup $\Sym_{\bmu}$ and hence
$\mL_{i,k}^{\bmu}$ is indeed a right $\HH_{n,R}$-module endomorphism of $\fm_{\bmu}\HH_{n,R}$.

Let $K$ be a field and $x$ be an indeterminant over $K$. Set $\O_K:=K[x]_{(x)}$. For $R\in\{\O_K,\K\}$, let $\HH_{n,R}$ be the cyclotomic Hecke algebra with Hecke parameter $\hat{\xi}$ and cyclotomic parameters $\hat{Q}_1,\cdots,\hat{Q}_\ell$ given by (\ref{OHecke}). In particular, there is a natural embedding $S_{n,\O_K}\hookrightarrow S_{n,\K}$. For any $k,m\in\mathbb{N}$ with $k\leq m$, we set $[k,m]:=\{k,k+1,\cdots,m\}$.

\begin{dfn}\text{(\cite[Definition 3.4(2)]{JM})} Let $\bT\in\mT(\blam,\bmu)$ and $(i,k)\in [1,n]\times [1,\ell]$. We define $$
\cont_{\bT}(i,k):=\sum_{\substack{\gamma\in[\blam]\\ \bT(\gamma)=(i,k)}}\cont(\gamma) ,
$$
where $\cont(\gamma)$ is defined as (\ref{content}).
It is easy to check that
\begin{equation}\label{invertible} \cont_{\bT}(i,k)\in\K^\times,\quad\,\forall\, 1\leq i\leq n,\,\, 1\leq k\leq\ell.
\end{equation}
\end{dfn}

The following lemma can be proved in a similar argument as that used in the proof of \cite[Lemma 3.12]{JM}, see also \cite[2.5]{Ma} and \cite[Lemma 6.5]{AMR}.

\begin{lem} Let $\bS\in\mT(\blam,\bmu)$ and $\bT\in\mT(\brho,\bnu)$ be distinct semistandard tableaux. Then $$
\cont_{\bS}(i,k)\neq\cont_{\bT}(i,k),
$$
for some $(i,k)\in [1,n]\times [1,\ell]$.
\end{lem}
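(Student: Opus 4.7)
My plan is to argue by contradiction, pinpointing the earliest entry at which $\bS$ and $\bT$ disagree and then showing the contents at that entry must differ. Throughout, I use the total order on $[1,n]\times[1,\ell]$ from the paper, namely $(i,k)<(i',k')$ if $k<k'$ or ($k=k'$ and $i<i'$).

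First, I would suppose for contradiction that $\cont_\bS(i,k)=\cont_\bT(i,k)$ for all $(i,k)$, and let $(i_0,k_0)$ be the minimum element of $[1,n]\times[1,\ell]$ for which the box preimages $\bS^{-1}(i_0,k_0)$ and $\bT^{-1}(i_0,k_0)$ disagree. By minimality, $\bS$ and $\bT$ agree on every strictly smaller entry, so the multiset of boxes containing those entries is the same, and assembles into a common sub-multipartition $\bmu=(\mu^{(1)},\ldots,\mu^{(\ell)})$ inside both $\blam$ and $\brho$. The semistandard conditions then force the entry $(i_0,k_0)$ to appear only in components $c\le k_0$ (condition (3)), and in each such component its occurrences sit immediately on top of $\mu^{(c)}$ and form a horizontal strip (two in the same column would violate the strict column-increase condition). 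Writing $S_c,T_c$ for these strips in $\bS,\bT$ respectively, there is some $c_*\le k_0$ with $S_{c_*}\ne T_{c_*}$.

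Next, using the explicit formula $\cont(c,i,j)=\hat\xi^{j-i}\hat Q_c+(1+\hat\xi+\cdots+\hat\xi^{j-i-1})$, I would rewrite the hypothesised identity $\cont_\bS(i_0,k_0)=\cont_\bT(i_0,k_0)$ as
\begin{equation*}
\sum_{c=1}^{k_0}\hat Q_c\,P_c(\hat\xi)+R(\hat\xi)=0\quad\text{in }\K,
\end{equation*}
where $P_c(\hat\xi)=\sum_{\gamma\in S_c\triangle T_c}(\pm)\hat\xi^{d(\gamma)}$ (the sign being $+1$ if $\gamma\in S_c\setminus T_c$ and $-1$ otherwise, and $d(\gamma)=j-i$), while $R(\hat\xi)$ is the analogous Laurent polynomial coming from the $1+\hat\xi+\cdots+\hat\xi^{j-i-1}$ parts. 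Because a horizontal strip has at most one box per diagonal, each $P_c$ is a signed sum of \emph{distinct} powers of $\hat\xi$, and in particular $P_{c_*}\ne 0$ as a Laurent polynomial.

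Finally, I would derive a contradiction by substituting $\hat\xi=x+\xi$ and $\hat Q_c=x^{cn}+Q_c$ from (\ref{OHecke}) and carrying out a degree analysis in $\K=K(x)$. The term $x^{cn}P_c(x+\xi)$ contributes to $x$-degrees in the window $[(c-1)n+1,(c+1)n-1]$; by peeling off the largest $c$ with $P_c\ne 0$ first and extracting its leading coefficient against the companion contributions $Q_cP_c(x+\xi)$ and $R(x+\xi)$, one shows inductively on decreasing $c$ that every $P_c$ must be identically zero. This contradicts $P_{c_*}\ne 0$, which forces $S_{c_*}=T_{c_*}$ and so contradicts the choice of $(i_0,k_0)$.

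The main obstacle, in my view, is the degree analysis in the last step. Although each $P_c$ has $\hat\xi$-degrees bounded in absolute value by $n-1$, the resulting $x$-degree windows $[(c-1)n+1,(c+1)n-1]$ overlap between consecutive components, so a naive top-degree argument does not separate them. The crux is to use the specific form $\hat Q_c=x^{cn}+Q_c$, so that $\hat Q_c$ is ``generic'' to order $x^{cn}$, together with the nonzero leading monomial of each $P_c(x+\xi)$, to peel off the contributions component by component. This is essentially the algebraic independence that makes the generic parameter choice (\ref{OHecke}) effective for seminormal-basis purposes, and it is exactly what has been used earlier to show that the contents $\cont_\bT(i,k)$ separate rows within a component via (\ref{invertible}).
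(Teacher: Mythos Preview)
The paper does not actually prove this lemma: it only records that it ``can be proved in a similar argument as that used in the proof of \cite[Lemma 3.12]{JM}, see also \cite[2.5]{Ma} and \cite[Lemma 6.5]{AMR}.'' So there is no paper-proof to compare against line by line; what one can compare is whether your outline matches the spirit of those references and whether it is complete.

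Your combinatorial reduction is sound and is exactly the standard one: pick the first entry $(i_0,k_0)$ at which $\bS$ and $\bT$ disagree, observe that both tableaux share the same sub-multipartition below $(i_0,k_0)$, and that the boxes carrying $(i_0,k_0)$ form horizontal strips $S_c,T_c$ (for $c\le k_0$) on that common shape. Your observation that $S_c,T_c$ are horizontal strips on the \emph{same} partition, so that any diagonal carries at most one box of $S_c\triangle T_c$ and hence $P_c$ is a $\pm1$-combination of distinct powers of $\hat\xi$, is correct and useful. One small slip: you reuse the symbol $\bmu$ for the common sub-multipartition, clashing with the type of $\bS$.

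The genuine gap is the degree analysis. Your stated window $[(c-1)n+1,(c+1)n-1]$ for $x^{cn}P_c(x+\xi)$ only tracks the leading $x$-power of each $\hat\xi^d$; the lower-order terms of $(x+\xi)^d$ (or, after clearing denominators by $(x+\xi)^{n-1}$, the full polynomial $y^{n-1}P_c(y)$ with $y=\hat\xi$) push contributions well outside that window. Even at the level of leading terms, your ``peel off the largest $c$'' step fails precisely when every diagonal in $S_{c_{\max}}\triangle T_{c_{\max}}$ is negative: then the top $x$-degree of $x^{c_{\max}n}P_{c_{\max}}(\hat\xi)$ is at most $c_{\max}n-1$, which is already reached by $x^{(c_{\max}-1)n}P_{c_{\max}-1}(\hat\xi)$, and nothing forbids cancellation. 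More tellingly, the \emph{linear} statement you are implicitly invoking---that $\sum_c\hat Q_cP_c(\hat\xi)+R(\hat\xi)=0$ with $P_c$ of $\hat\xi$-degree in $[-(n-1),n-1]$ forces all $P_c=0$---is false for the parameters in (\ref{OHecke}): after multiplying by $\hat\xi^{n-1}$, a straightforward dimension count shows that for $\ell$ and $n$ not both tiny the source has strictly larger $K$-dimension than the target polynomial space, so nontrivial relations exist. Hence the argument cannot be completed by degrees alone; one must use more of the combinatorial structure of the $P_c$ (for instance the $\pm1$-coefficient and horizontal-strip constraints), or alternatively run the separation argument first with genuinely independent indeterminates $q,Q_1,\dots,Q_\ell$ (as in \cite{JM}) and then specialise. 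As written, your final step is a plan rather than a proof.
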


\begin{lem}\text{(\cite[Theorem 3.10]{JM})} Let $(i,k)\in  [1,n]\times [1,\ell]$. Let $\blam\in\Parts[\ell,n], \brho,\bmu,\bnu\in\mC$ and
$\bS\in\mT(\blam,\bnu), \bT\in\mT(\blam,\bmu)$. Then $$
\varphi_{\bS\bT}\mL_{i,k}^{\brho}=\begin{cases}\cont_{\bT}(i,k)\varphi_{\bS\bT}+\sum_{\bT\lhd\bU\in\mT(\blam,\bmu)}a_{\bU}\varphi_{\bS\bU}\,\, \pmod{(S_{n})^{\rhd\blam}}, &\text{if $\brho=\bmu$;}\\
0, &\text{if $\brho\neq\bmu$,}\end{cases}
$$
where $a_{\bU}\in R$ for each $\bU$.
\end{lem}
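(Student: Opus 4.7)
The case $\brho \neq \bmu$ is immediate: $\mL_{i,k}^\brho$ annihilates $\fm_{\bmu'}^R\HH_{n,R}$ for $\bmu' \neq \brho$ and sends $\fm_\brho^R\HH_{n,R}$ into itself, while $\varphi_{\bS\bT}$ (with $\bT \in \mT(\blam,\bmu)$) is supported on $\fm_\bmu^R\HH_{n,R}$; when $\brho \neq \bmu$ these supports are disjoint and the composition vanishes. For $\brho = \bmu$, the commutation $L_{i,k}^\bmu\fm_\bmu = \fm_\bmu L_{i,k}^\bmu$ shows that $\varphi_{\bS\bT}\mL_{i,k}^\bmu$ sends $\fm_\bmu h \mapsto \fm_{\bS\bT}L_{i,k}^\bmu h$ and vanishes on $\fm_{\bmu'}^R\HH_{n,R}$ for $\bmu' \neq \bmu$. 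Via the cellular basis $\{\varphi_{\bS'\bT'}^R\}$ of $S_{n,R}$, the claim is thus equivalent to the identity
\begin{equation*}
\fm_{\bS\bT}L_{i,k}^\bmu \equiv \cont_\bT(i,k)\fm_{\bS\bT} + \sum_{\bT \lhd \bU \in \mT(\blam,\bmu)} a_\bU\fm_{\bS\bU} \pmod{(\HH_{n,R})^{\rhd\blam}}
\end{equation*}
inside $\HH_{n,R}$, for some $a_\bU \in R$.

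I would prove this identity by passing to $\HH_{n,\K}$, which is split semisimple by Lemma \ref{ss}, and exploiting the seminormal basis $\{f_{\s\t}\}$. The combinatorial key is that for every $\t \in \Std(\blam)$ with $\bmu(\t) = \bT$, the entries $a, a+1, \ldots, z$ of row $i$ of $\t^{\mu^{(k)}}$ are exactly the values $\t(\gamma)$ for cells $\gamma \in [\blam]$ with $\bT(\gamma) = (i,k)$, so
\begin{equation*}
\sum_{j=a}^{z} c_\t(j) = \sum_{\substack{\gamma \in [\blam] \\ \bT(\gamma)=(i,k)}} \cont(\gamma) = \cont_\bT(i,k)
\end{equation*}
depends only on $\bT$, not on $\t$. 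Combined with the eigenvalue property $f_{\s\t}L_j = c_\t(j)f_{\s\t}$, this yields $f_{\s\t}L_{i,k}^\bmu = \cont_\bT(i,k)f_{\s\t}$ for every such $(\s,\t)$. Applying the unitriangular transitions between the Murphy and seminormal bases from Lemma \ref{fsemi}, summing over all $(\s,\t)$ with $\bnu(\s)=\bS,\bmu(\t)=\bT$, and working modulo $(\HH_{n,\K})^{\rhd\blam}$, expresses $\fm_{\bS\bT}L_{i,k}^\bmu$ as $\cont_\bT(i,k)\fm_{\bS\bT}$ plus a $\K$-linear combination of Murphy basis elements $\fm_{\u\v}$ with $(\u,\v) \rhd (\s,\t)$ for some such $(\s,\t)$.

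The main obstacle is the final bookkeeping: grouping these corrections into the prescribed form $\sum_{\bU \rhd \bT} a_\bU\fm_{\bS\bU}$. By Lemma \ref{fmST} together with $\fm_\bmu L_{i,k}^\bmu = L_{i,k}^\bmu\fm_\bmu$, the product $\fm_{\bS\bT}L_{i,k}^\bmu$ lies in $\fm_\bnu^R\HH_{n,R} \cap \HH_{n,R}\fm_\bmu^R$, and by standard Dipper--James--Mathas cellular theory such an element modulo $(\HH_{n,R})^{\rhd\blam}$ is an $R$-combination of $\{\fm_{\bS'\bU'}^R : \bS' \in \mT(\blam,\bnu), \bU' \in \mT(\blam,\bmu)\}$. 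The seminormal-basis analysis above then identifies the coefficients: the left index is forced to $\bS' = \bS$ because the $f$-leading term of $\fm_{\bS\bT}$ (namely $\sum f_{\s\t}$ over $\bnu(\s)=\bS, \bmu(\t)=\bT$) is an eigenvector of $L_{i,k}^\bmu$ with eigenvalue $\cont_\bT(i,k)$, and the triangularity propagates this left-index constraint to every correction; meanwhile, the standard fact that $\v \rhd \t$ implies $\bmu(\v) \unrhd \bmu(\t)$ restricts the right index to $\bU' \unrhd \bT$, with the equality case $\bU' = \bT$ folded into the leading coefficient. Finally, the identity descends from $\K$ to $R$ because both sides lie in $\HH_{n,R}$ and the Murphy basis is $R$-free modulo $(\HH_{n,R})^{\rhd\blam}$, forcing every $a_\bU$ to be in $R$.
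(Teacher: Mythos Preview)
The paper does not give its own proof of this lemma; it simply imports the result from \cite[Theorem 3.10]{JM}. Your proposal therefore supplies an argument where the paper has none, and the overall strategy is sound: the case $\brho\neq\bmu$ is disposed of correctly, and for $\brho=\bmu$ you correctly reduce to proving
\[
\fm_{\bS\bT}L_{i,k}^\bmu\equiv \cont_\bT(i,k)\,\fm_{\bS\bT}+\sum_{\bT\lhd\bU}a_\bU\,\fm_{\bS\bU}\pmod{(\HH_{n,R})^{\rhd\blam}}.
\]
Your combinatorial observation that $\sum_{j=a}^{z}c_\t(j)=\cont_\bT(i,k)$ for every $\t$ with $\bmu(\t)=\bT$ is exactly the right key.

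That said, the route through the seminormal basis over $\K$ is an unnecessary detour, and it is also where your sketch becomes soft. The James--Mathas argument works directly over $R$ using only the cellular structure: one has $\fm_{\s\t}L_j\equiv c_\t(j)\,\fm_{\s\t}+\sum_{\v\rhd\t}r_{\t\v}\,\fm_{\s\v}\pmod{(\HH_{n,R})^{\rhd\blam}}$ with $r_{\t\v}\in R$ independent of $\s$ (this is the defining axiom of a cellular basis, applied to the right action). Summing over $j\in\{a,\dots,z\}$ and then over all $(\s,\t)$ with $\bnu(\s)=\bS$, $\bmu(\t)=\bT$ gives the identity with error terms of the shape $\sum_{\bnu(\s)=\bS}R_\v\,\fm_{\s\v}$ where $R_\v:=\sum_{\bmu(\t)=\bT}r_{\t\v}$. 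This already shows left-index preservation without any appeal to $f$-basis triangularity; your justification (``the triangularity propagates this left-index constraint'') is correct in effect but roundabout.

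The genuine gap in your sketch is the grouping step: you assert that the error terms assemble into $\sum_{\bU\rhd\bT}a_\bU\,\fm_{\bS\bU}$, but the $R_\v$ are not \emph{a priori} constant on fibres $\bmu^{-1}(\bU)$. The missing ingredient is to combine two facts. First, since $\fm_{\bS\bT}L_{i,k}^\bmu\in\fm_\bnu^R\HH_{n,R}\cap\HH_{n,R}\fm_\bmu^R$, its image in $\HH_{n,R}/(\HH_{n,R})^{\rhd\blam}$ lies in the $R$-span of $\{\fm_{\bS'\bU'}:\bS'\in\mT(\blam,\bnu),\,\bU'\in\mT(\blam,\bmu)\}$ (this is \cite[Corollary 4.15]{DJM2}). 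Second, comparing Murphy-basis coefficients with the expression obtained from cellularity forces $c_{\bS'\bU'}=0$ for $\bS'\neq\bS$, and for $\bS'=\bS$ forces $R_\v=c_{\bS,\bmu(\v)}$ whenever $\bmu(\v)$ is semistandard (and $R_\v=0$ otherwise). Taking $\v$ minimal among $\{\v:\bmu(\v)=\bT\}$ then gives $c_{\bS,\bT}=r_{\v\v}=\cont_\bT(i,k)$, and $\v\rhd\t\Rightarrow\bmu(\v)\unrhd\bmu(\t)$ gives $\bU'\unrhd\bT$ whenever $c_{\bS,\bU'}\neq 0$. This completes your argument, over $R$, without ever leaving the Murphy basis; the passage to $\K$ and back is not needed and in fact does not directly yield the statement for an arbitrary integral domain $R$.
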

In the notation of the above lemma, we set $$\begin{aligned}
r_{\bT}(\brho,i,k)&:=\begin{cases} \cont_\bT(i,k), &\text{if $\brho=\bmu$;}\\ 0, &\text{if $\brho\neq\bmu$.}
\end{cases}\\
\mathcal{C}(\brho,i,k)&:=\bigl\{r_{\bT}(\brho,i,k)\bigm| \bT\in\mT(\blam,\bmu),\blam\in\Parts[\ell,n],\bmu\in\mC\bigr\}.
\end{aligned}
$$

\begin{dfn}\text{(\cite[Definition 3.1]{MathasSeminormal})} Let $\bT\in\mT(\blam,\bmu)$. Define $$
\mathcal{F}_{\bT}:=\prod_{\substack{(i,k)\in [1,n]\times [1,\ell]\\ \brho\in\mC}}\prod_{\substack{r(\brho,i,k)\in \mathcal{C}(\brho,i,k) \\
r(\brho,i,k)\neq r_\bT(\brho,i,k)}}
\frac{\mathcal{L}_{i,k}^{\brho}-r(\brho,i,k)}{r_{\bT}(\brho,i,k)-r(\brho,i,k)}\in S_{n,\K} .
$$
If $\bS\in\mT(\blam,\bnu)$, then we define $$
\mathcal{F}_{\bS\bT}:=\mathcal{F}_{\bS}\varphi_{\bS\bT}^{\O_K}\mathcal{F}_{\bT}=\mathcal{F}_{\bS}\varphi_{\bS\bT}^{\K}\mathcal{F}_{\bT} .
$$
\end{dfn}
In particular, by (\ref{invertible}), we have $$
\mathcal{F}_{\bS\bT}(\fm_{\bmu}\HH_{n,\K})\subseteq \fm_{\bnu}\HH_{n,\K},\quad\, \mathcal{F}_{\bS\bT}(\fm_{\brho}\HH_{n,\K})=0,\,\,\forall\,\brho\neq\bmu .
$$

\begin{lem}\label{Ftsemi2} Assume that $\omega:=(\emptyset,\cdots,\emptyset,(1^n))\in\mC^+$. Let $\blam\in\Parts[\ell,n]$. Then for any $\s,\t\in\Std(\blam)=\mathcal{T}_0(\blam,\omega)$, $\mathcal{F}_{\t}$ is the left multiplication of $\HH_{n,\K} $ with $F_\t$, and $\mathcal{F}_{\s\t}$ is the left multiplication of $\HH_{n,\K} $ with $f_{\s\t}$.
\end{lem}

\begin{proof} The second part of the lemma follows from Lemma \ref{varphim1} and the first part of the lemma. For the first part, using \cite[Proposition 2.6, Theorem 2.15]{Ma} and \cite[Proposition 3.4, Theorem 3.16]{MathasSeminormal}, we know that $\mathcal{F}_{t}$ acts on $\HH_{n,\K} $ via the left multiplication with the following element $$
\widetilde{F}_\t:=\prod_{\substack{(i,k)\in [1,n]\times [1,\ell]}}\prod_{\substack{r(\omega,i,k)\in \mathcal{C}(\omega,i,k) \\
r(\omega,i,k)\neq r_\t(\omega,i,k)}}
\frac{{L}_{i,k}^{\omega}-r(\omega,i,k)}{r_{\t}(\omega,i,k)-r(\omega,i,k)}\in \HH_{n,\K} .
$$
Note that $L_{i,\ell}^\omega=L_i$. The element $\widetilde{F}_\t$ clearly contains $$
F_\t=\prod\limits^n\limits_{i=1}\prod\limits_{\substack{c\in C(i)\\c\neq c_{\t}(i)}}\frac{L_i-c}{c_{\t}(i)-c}
$$
as a factor. More precisely, we have that
$$
\widetilde{F}_\t = F_\t \cdot \prod_{i =1}^{n}\frac{L_i}{c_{\t}(i)}.
$$
Applying \cite[Proposition 2.6, Theorem 2.15]{Ma} and \cite[Proposition 3.4, Theorem 3.16]{MathasSeminormal}, we see that each of the above element acts on $F_\t$ as the identity. This completes the proof of the lemma.
\end{proof}

The dominance order ``$\unrhd$'' can be naturally generalized from the set of standard tableaux to the set of semistandard tableaux, see \cite[Definition 3.6]{JM}.

\begin{dfn} Let $\bS\in\mT(\blam,\bmu)$, $\bT\in\mT(\blam,\bnu)$, $\bU\in\mT(\brho,\balpha)$ and $\bV\in\mT(\brho,\bbeta)$. If either $\brho\rhd\blam$, or $\blam=\brho$ and $\bU\unrhd\bS$ and $\bV\unrhd\bT$, then we write $$
(\bU,\bV)\unrhd (\bS,\bT) .
$$
If $(\bU,\bV)\unrhd (\bS,\bT)$ and $(\bU,\bV)\neq(\bS,\bT)$, then we write $(\bU,\bV)\rhd (\bS,\bT)$.
\end{dfn}

By the general theory (\cite{MathasSeminormal}) of cellular algebras with a family of JM elements which separate $\cup_{\substack{\blam\in\mC^+\\\ \bmu\in\mC}}\mT(\blam,\bmu)$ over $\K$, we get the following result.

\begin{prop}\text{(\cite[Lemma 3.3, Theorems 3.7, 3.16]{MathasSeminormal}, \cite[Theorem 3.8]{GL})}\label{Fseminormal} 1) For any $\bS\in\mT(\blam,\bmu)$ and $\bT\in\mT(\blam,\bnu)$, we have $$\begin{aligned}
\varphi_{\bS\bT}^{\O_K}&=\mathcal{F}_{\bS\bT}+\sum_{\substack{\bU\in\mT(\brho,\bmu),\bV\in\mT(\brho,\bnu)\\
(\bU,\bV)\rhd (\bS,\bT)}}a_{\bU\bV}^{\bS\bT}\mathcal{F}_{\bU\bV},\\
\mathcal{F}_{\bS\bT}&=\varphi_{\bS\bT}^{\O_K}+\sum_{\substack{\bU\in\mT(\brho,\bmu),\bV\in\mT(\brho,\bnu)\\(\bU,\bV)\rhd (\bS,\bT)}}b_{\bU\bV}^{\bS\bT}\varphi_{\bU\bV}^{\O_K},
\end{aligned}
$$
where $a_{\bU\bV}^{\bS\bT}, b_{\bU\bV}^{\bS\bT}\in\K$ for each pair $(\bU,\bV)$.

2) $\mathcal{F}_{\bS\bT}\mathcal{F}_{\bU\bV}=\delta_{\bT\bU}\gamma_{\bT}\mathcal{F}_{\bS\bV}$ for some $\gamma_{\bT}\in\K^\times$. Moreover, $\gamma_{\bT^\blam}=1$;

3) The set \begin{equation}\label{FseminormaBasis}
\bigl\{\mathcal{F}_{\bS\bT}\bigm|\bS\in\mT(\blam,\bmu), \bT\in\mT(\blam,\bnu),\blam\in\mC^+,\bmu,\bnu\in\mC\bigr\}
\end{equation}
forms a $\K$-basis of the semisimple $\K$-algebra $S_{n,\K}$;

4) For each $\bS\in\mT(\blam,\bmu)$, $\mathcal{F}_{\bS}=\mathcal{F}_{\bS\bS}/\gamma_{\bS}$ and $S_{n,\K} \mathcal{F}_{\bS}\cong\Delta_{\K}^\blam$. The set $$\bigl\{\mathcal{F}_{\bS\bS}/\gamma_{\bS}\bigm|\bS\in\mT(\blam,\bmu), \blam\in\mC^+,\bmu\in\mC\bigr\}
$$
forms a complete set of pairwise orthogonal primitive idempotents of $S_{n,\K}$.
\end{prop}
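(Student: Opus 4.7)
The plan is to invoke Mathas's general framework from \cite{MathasSeminormal} for cellular algebras equipped with a separating family of JM elements, specialized to the cyclotomic Schur algebra $S_{n,\K}$, the cellular basis $\{\varphi_{\bS\bT}^{\O_K}\}$ of Lemma~\ref{lem:stdbasis}, and the family $\{\mathcal{L}_{i,k}^{\brho}\mid 1\leq i\leq n,\,1\leq k\leq \ell,\,\brho\in\mC\}$. All four assertions 1)--4) are standard outputs of that framework (see \cite[Lemma 3.3, Theorems 3.7, 3.16]{MathasSeminormal} and \cite[Theorem 3.8]{GL}) once its hypotheses are verified, so the real content of the proof is to check those hypotheses in our setting.

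There are three things to verify. First, the elements $\mathcal{L}_{i,k}^{\brho}$ must be pairwise commuting elements of $S_{n,\K}$; this is immediate because, by construction, $\mathcal{L}_{i,k}^{\brho}$ acts as zero on $\fm_{\bmu}^{\K}\HH_{n,\K}$ whenever $\bmu\neq\brho$, and as left multiplication by the element $L_{i,k}^{\brho}=L_a+\cdots+L_z$ of the commutative subalgebra $\K[L_1,\ldots,L_n]\subseteq\HH_{n,\K}$ when $\bmu=\brho$. Second, the upper-triangular action on the cellular basis together with the eigenvalue identification is precisely the lemma immediately preceding the definition of $\mathcal{F}_{\bT}$ (the analogue of \cite[Theorem 3.10]{JM}), which already states
\[
\varphi_{\bS\bT}\,\mathcal{L}_{i,k}^{\brho}\equiv r_{\bT}(\brho,i,k)\varphi_{\bS\bT}+\sum_{\bT\lhd\bU}a_{\bU}\varphi_{\bS\bU}\pmod{(S_n)^{\rhd\blam}}.
\]
Third, and the only real check, is the separation condition: for distinct semistandard tableaux $\bS\in\mT(\blam,\bmu)$ and $\bT\in\mT(\brho,\bnu)$, there exists a triple $(\brho',i,k)$ with $r_{\bS}(\brho',i,k)\neq r_{\bT}(\brho',i,k)$. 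If $\bmu\neq\bnu$, take $\brho'=\bmu$: by \eqref{invertible} every $\cont_{\bS}(i,k)$ is nonzero in $\K^\times$, whereas $r_{\bT}(\bmu,i,k)=0$ since $\bT$ has type $\bnu\neq\bmu$. If $\bmu=\bnu$, take $\brho'=\bmu$ and invoke the content-separation lemma stated above Definition of $\mathcal{F}_\bT$ to find $(i,k)$ with $\cont_{\bS}(i,k)\neq\cont_{\bT}(i,k)$. This part is the main obstacle, as it is the only genuine input beyond the abstract framework, but it reduces quickly to properties already assembled.

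With the hypotheses in place, parts 1)--4) follow directly. Part 1) is the unitriangular change of basis produced by the general theory. Part 2) follows because the $\mathcal{F}_{\bT}$ are (after rescaling) the primitive idempotents in the commutative subalgebra generated by the $\mathcal{L}_{i,k}^{\brho}$, and the orthogonality $\mathcal{F}_{\bS\bT}\mathcal{F}_{\bU\bV}=\delta_{\bT\bU}\gamma_{\bT}\mathcal{F}_{\bS\bV}$ is a formal consequence; the normalization $\gamma_{\bT^\blam}=1$ is obtained by comparing the leading term of $\mathcal{F}_{\bT^\blam\bT^\blam}$ with $\varphi_{\bT^\blam\bT^\blam}^{\O_K}$, using \eqref{bilinear2} and the fact that $\bT^\blam$ is maximal in $\unrhd$ inside $\mT(\blam,\blam)$. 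Part 3) is immediate from Part 1), since the transition matrix between $\{\varphi_{\bS\bT}^{\O_K}\}$ and $\{\mathcal{F}_{\bS\bT}\}$ is unitriangular hence invertible over $\K$. For Part 4), the primitivity of $\mathcal{F}_{\bS}=\mathcal{F}_{\bS\bS}/\gamma_{\bS}$ and the isomorphism $S_{n,\K}\mathcal{F}_{\bS}\cong\Delta_{\K}^\blam$ follow from the cellular-algebra description of Weyl modules together with the orthogonality in Part 2); completeness then follows by counting dimensions against the split-semisimple decomposition of $S_{n,\K}$.
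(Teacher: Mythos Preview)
Your proposal is correct and follows essentially the same approach as the paper. The paper does not give a detailed proof of this proposition; it simply prefaces the statement with ``By the general theory (\cite{MathasSeminormal}) of cellular algebras with a family of JM elements which separate $\cup_{\blam,\bmu}\mT(\blam,\bmu)$ over $\K$, we get the following result,'' and then cites \cite[Lemma 3.3, Theorems 3.7, 3.16]{MathasSeminormal} and \cite[Theorem 3.8]{GL}. Your write-up is a careful unpacking of exactly that invocation: you verify the commutativity, the upper-triangular action (already recorded in the paper as \cite[Theorem 3.10]{JM}), and the separation condition (already recorded in the paper via the lemma on distinct contents together with \eqref{invertible}), and then read off parts 1)--4) from the general framework.
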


\begin{cor}\label{Tlamcor} Let $\blam\in\mC^+$. Then $$
\varphi_{\bT^\blam\bT^\blam}^{\O_K}=\mathcal{F}_{\bT^\blam\bT^\blam}+\sum_{\substack{\bS\in\mT(\brho,\blam),\bT\in\mT(\brho,\blam)\\ \blam\lhd\brho\in\mC^+}}a_{\bS\bT}^{\blam}\mathcal{F}_{\bS\bT},
$$
where $a_{\bS\bT}^{\blam}\in\K$ for each pair $(\bS,\bT)$.
\end{cor}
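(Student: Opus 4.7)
The plan is to invoke Proposition \ref{Fseminormal}(1) with $\bS=\bT=\bT^\blam\in\mT(\blam,\blam)$, which immediately produces the expansion
\begin{equation*}
\varphi_{\bT^\blam\bT^\blam}^{\O_K}=\mathcal{F}_{\bT^\blam\bT^\blam}+\sum_{\substack{\bU\in\mT(\brho,\blam),\bV\in\mT(\brho,\blam)\\ (\bU,\bV)\rhd (\bT^\blam,\bT^\blam)}}a_{\bU\bV}^{\bT^\blam\bT^\blam}\,\mathcal{F}_{\bU\bV}.
\end{equation*}
What is left is therefore purely combinatorial: I must show that none of the surviving indices has $\brho=\blam$, so that each contributing $\brho$ satisfies $\brho\rhd\blam$, matching the claimed shape of the sum.

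For that, the key observation is $\mT(\blam,\blam)=\{\bT^\blam\}$. I would prove this by peeling off components from the right. Condition (3) of semistandardness forces every entry in component $\ell$ of any $\bS\in\mT(\blam,\blam)$ to be of the form $(i,\ell)$; since the multiplicity of $(i,\ell)$ in the type $\blam$ equals $\lam^{(\ell)}_i$ and the size $|\lam^{(\ell)}|$ of the $\ell$-th component matches $\sum_i\lam^{(\ell)}_i$, the weakly-increasing-row / strictly-increasing-column conditions force row $i$ of $\bS^{(\ell)}$ to consist of $\lam_i^{(\ell)}$ copies of $(i,\ell)$. Removing the last component and iterating the argument on the truncated multipartition, one obtains $\bS=\bT^\blam$. (Equivalently, one may note that $\bT^\blam$ is maximal in dominance in $\mT(\blam,\blam)$, while the pigeonhole counting just described precludes any semistandard filling strictly dominated by it.)

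With $\mT(\blam,\blam)=\{\bT^\blam\}$ in hand, the case $\brho=\blam$ forces $\bU=\bV=\bT^\blam$, contradicting $(\bU,\bV)\rhd(\bT^\blam,\bT^\blam)$; hence every index surviving in the sum has $\brho\rhd\blam$, and renaming $(\bU,\bV)$ as $(\bS,\bT)$ yields the stated formula with $a^\blam_{\bS\bT}:=a_{\bS\bT}^{\bT^\blam\bT^\blam}\in\K$.

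There is no real obstacle in this argument beyond verifying the uniqueness $\mT(\blam,\blam)=\{\bT^\blam\}$, which is a routine consequence of condition (3) in the definition of semistandard multi-tableau. Once that is granted, the corollary is an immediate specialisation of Proposition \ref{Fseminormal}(1).
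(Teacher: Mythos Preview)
Your proposal is correct and is precisely the argument the paper has in mind: the corollary is stated immediately after Proposition~\ref{Fseminormal} with no separate proof, so it is meant to follow by specialising part (1) to $\bS=\bT=\bT^\blam$ together with the standard combinatorial fact $\mT(\blam,\blam)=\{\bT^\blam\}$, which you supply explicitly.
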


\begin{prop}\label{formextend1} Assume that $\omega:=(\emptyset,\cdots,\emptyset,(1^n))\in\mC^+$. Then the symmetrizing form $\tau_{\K}$ on $\HH_{n,\K}$ can be uniquely extended to a symmetrizing form $\widetilde{\tau}_{\K}$ on $S_{n,\K}$ such that $$
\widetilde{\tau}_{\K}(\mathcal{F}_{\bS\bT})=\delta_{\bS,\bT}\frac{\gamma_{\bS}}{s_\blam},\quad\forall\,\bS\in\mathcal{T}_0(\blam,\bmu),\bT\in\mathcal{T}_0(\blam,\bnu), \blam\in\Parts[\ell,n], \bmu,\bnu\in\mC,
$$
where $s_\blam$ is the Schur element of the cyclotomic Hecke algebra $\HH_{n,\K}$ corresponding to $\blam\in\Parts[\ell,n]$, see \cite[Definition 1.5]{Ma} and \cite{Zh}. Furthermore, for any $x\in S_{n,\K}$, there exists an element $h\in\HH_{n,\K}$, such that \begin{equation}\label{equiv1}
x\equiv h\,\,\,\pmod{\bigl[S_{n,\K} ,S_{n,\K}\bigr]} .
\end{equation}
\end{prop}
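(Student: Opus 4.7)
The plan is to work entirely in the seminormal basis $\{\mathcal{F}_{\bS\bT}\}$ of $S_{n,\K}$ from Proposition~\ref{Fseminormal}, which realises $S_{n,\K}$ as a direct sum of matrix blocks indexed by $\blam\in\mC^+$. Define $\widetilde{\tau}_\K$ to be the $\K$-linear extension of the prescription $\widetilde{\tau}_\K(\mathcal{F}_{\bS\bT}):=\delta_{\bS\bT}\gamma_\bS/s_\blam$ on the basis \eqref{FseminormaBasis}. Using the orthogonality relation $\mathcal{F}_{\bS\bT}\mathcal{F}_{\bU\bV}=\delta_{\bT\bU}\gamma_\bT\mathcal{F}_{\bS\bV}$ from Proposition~\ref{Fseminormal}(2), one computes
\[
\widetilde{\tau}_\K(\mathcal{F}_{\bS\bT}\mathcal{F}_{\bU\bV})=\delta_{\bT\bU}\delta_{\bS\bV}\gamma_\bS\gamma_\bT/s_\blam,
\]
which is manifestly symmetric under swapping $(\bS,\bT)\leftrightarrow(\bU,\bV)$; and the Gram matrix of the induced bilinear form on the seminormal basis is block diagonal across $\blam\in\mC^+$ with nonzero diagonal entries (as $\gamma_\bS\in\K^\times$ and $s_\blam\neq 0$ in $\K$ by the semisimplicity of $\HH_{n,\K}$, Lemma~\ref{ss}), giving non-degeneracy. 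Hence $\widetilde{\tau}_\K$ is a symmetrizing form on $S_{n,\K}$.

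Next, to see that $\widetilde{\tau}_\K$ extends $\tau_\K$, I would use Lemma~\ref{varphim1} and Lemma~\ref{Ftsemi2} to identify $\HH_{n,\K}\cong\phi^1_{\omega\omega}S_{n,\K}\phi^1_{\omega\omega}$ and $f_{\s\t}\leftrightarrow\mathcal{F}_{\omega(\s)\omega(\t)}$; consistency of the orthogonality relations for $\{f_{\s\t}\}$ and $\{\mathcal{F}_{\omega(\s)\omega(\t)}\}$ forces $\gamma_{\omega(\s)}=\gamma_\s$, so that $\widetilde{\tau}_\K(f_{\s\t})=\delta_{\s\t}\gamma_\s/s_\blam$. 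This matches $\tau_\K(f_{\s\t})$ by Lemma~\ref{tr0} combined with the known formula $s_\blam=\gamma_{\t_\blam}\check{\gamma}_{\t^{\blam'}}r_\blam(x)$ for the Schur element (from \cite[Theorem~5.9]{Ma} and \cite[Theorem~4.7]{Zh}). Uniqueness is then immediate: since $S_{n,\K}$ is split semisimple (as witnessed by the seminormal decomposition), any symmetrizing form is determined by one scalar per block $\blam\in\mC^+$; the restriction to $\HH_{n,\K}$ via $\phi^1_{\omega\omega}$ recovers the irreducible character $\chi^\blam$ on each block (since $\phi^1_{\omega\omega}\Delta^\blam_\K$ has basis $\{\varphi^\K_\bS:\bS\in\mT(\blam,\omega)\}$, which is isomorphic to $S(\blam)$ as an $\HH_{n,\K}$-module), so these block scalars are uniquely pinned down by $\widetilde{\tau}_\K|_{\HH_{n,\K}}=\tau_\K$.

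Finally, for the congruence assertion, two cocenter identities among seminormal basis elements suffice. If $\bS\neq\bT$ then
\[
\mathcal{F}_{\bS\bT}=\gamma_\bT^{-1}\mathcal{F}_{\bS\bT}\mathcal{F}_{\bT\bT}\equiv\gamma_\bT^{-1}\mathcal{F}_{\bT\bT}\mathcal{F}_{\bS\bT}=0\pmod{[S_{n,\K},S_{n,\K}]};
\]
and if $\bS,\bT$ lie in the same block $\blam$, then from $\mathcal{F}_{\bS\bT}\mathcal{F}_{\bT\bS}=\gamma_\bT\mathcal{F}_{\bS\bS}$ and $\mathcal{F}_{\bT\bS}\mathcal{F}_{\bS\bT}=\gamma_\bS\mathcal{F}_{\bT\bT}$ one obtains $\gamma_\bT\mathcal{F}_{\bS\bS}\equiv\gamma_\bS\mathcal{F}_{\bT\bT}\pmod{[S_{n,\K},S_{n,\K}]}$. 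Expanding an arbitrary $x\in S_{n,\K}$ in the seminormal basis and applying these two relations reduces $x$ modulo commutators to $\sum_{\blam\in\mC^+}c_\blam\mathcal{F}_{\omega(\tlam)\omega(\tlam)}$ for some $c_\blam\in\K$, which by Lemma~\ref{Ftsemi2} equals $\sum_\blam c_\blam f_{\tlam\tlam}\in\HH_{n,\K}$, furnishing the desired $h$. The most delicate point of the whole argument is the matching $\widetilde{\tau}_\K|_{\HH_{n,\K}}=\tau_\K$ in the second paragraph, which hinges on a careful cross-check between the Schur-algebra and Hecke-algebra normalisations of the seminormal constants $\gamma_\bullet$ and the Schur element $s_\blam$; the remaining verifications are routine bookkeeping with the seminormal basis.
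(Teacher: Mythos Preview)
Your proposal is correct and follows essentially the same route as the paper: define $\widetilde{\tau}_\K$ on the seminormal basis, verify symmetry and non-degeneracy via the multiplication rule in Proposition~\ref{Fseminormal}(2), check the restriction to $\HH_{n,\K}$ through Lemma~\ref{Ftsemi2}, and prove the cocenter statement by the two commutator identities $\mathcal{F}_{\bS\bT}\equiv 0$ for $\bS\neq\bT$ and $\gamma_\bT\mathcal{F}_{\bS\bS}\equiv\gamma_\bS\mathcal{F}_{\bT\bT}$. The only difference is cosmetic: the paper compresses the extension check into one line by citing \cite[Lemma~1.6]{Ma} (which is exactly the identity $\tau_\K(f_{\s\t})=\delta_{\s\t}\gamma_\s/s_\blam$), whereas you spell out the matching $\gamma_{\omega(\s)}=\gamma_\s$ and the Schur-element formula; and the paper does not explicitly argue uniqueness, presumably reading it as the trivial statement that the displayed formula determines $\widetilde{\tau}_\K$ on a basis, while you prove the stronger claim that any symmetrizing extension of $\tau_\K$ is unique.
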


\begin{proof} For any $\bS\in\mathcal{T}_0(\blam,\bmu),\bT\in\mathcal{T}_0(\blam,\bnu), \bU\in\mathcal{T}_0(\brho,\balpha),\bV\in\mathcal{T}_0(\brho,\bbeta), \blam,\brho\in\Parts[\ell,n], \bmu,\bnu,\balpha,\bbeta\in\mC$, we have $$\begin{aligned}
\widetilde{\tau}_{\K}(\mathcal{F}_{\bS\bT}\mathcal{F}_{\bU\bV})&=\widetilde{\tau}_{\K}(\delta_{\bT,\bU}\gamma_{\bT}\mathcal{F}_{\bS\bV})=\delta_{\bT,\bU}\delta_{\bS,\bV}\gamma_{\bT}\frac{\gamma_{\bS}}{s_\blam}\\
&=\delta_{\bS,\bV}\gamma_{\bS}\widetilde{\tau}_{\K}(\mathcal{F}_{\bT\bU})=\widetilde{\tau}_{\K}(\mathcal{F}_{\bU\bV}\mathcal{F}_{\bS\bT}).
\end{aligned}
$$
This proves that $\widetilde{\tau}_{\K}$ is a symmetric form on $S_{n,\K}$. Applying Proposition \ref{Fseminormal}, it is easy to see that $\widetilde{\tau}_{\K}$ is non-degenerate and hence a symmetrizing form on $S_{n,\K}$. Using Lemma \ref{Ftsemi2} and \cite[Lemma 1.6]{Ma} we also see that $\widetilde{\tau}_{\K}$ is an extension of $\tau_\K$ from $\HH_{n,\K}$ to $S_{n,\K}$. This proves the first part of the proposition.

For the second part, we note that $$
\mathcal{F}_{\bS\bT}=\mathcal{F}_{\bS\bT}\mathcal{F}_{\bT}/\gamma_{\bT}-0=\mathcal{F}_{\bS\bT}\mathcal{F}_{\bT}/\gamma_{\bT}-
(\mathcal{F}_{\bT}/\gamma_{\bT})\mathcal{F}_{\bS\bT}\in\bigl[S_{n,\K} ,S_{n,\K}\bigr],
$$ whenever $\bS\neq\bT$. Thus the second part  of the proposition follows from the equality $$
\mathcal{F}_{\bT\bT}=\frac{1}{\gamma_{\tlam}}\mathcal{F}_{\bT\tlam}\mathcal{F}_{\tlam\bT}\equiv \frac{1}{\gamma_{\tlam}}\mathcal{F}_{\tlam\bT}\mathcal{F}_{\bT\tlam}
=\frac{\gamma_{\bT}}{\gamma_{\tlam}}f_{\tlam\tlam}\,\,\,\pmod{\bigl[S_{n,\K},S_{n,\K}\bigr]} .
$$
This completes the proof of the proposition.
\end{proof}

\begin{lem}\label{fieldextension} Let $A$ be a finite dimensional algebra over a field $F$ and $L$ is an extension of $F$. Then there are canonical isomorphism: $$
L\otimes_{F}Z(A)\cong Z(L\otimes_{F}A) .
$$
In particular, $\dim_F Z(A)=\dim_L Z(L\otimes_{F}A)$. Moreover, $$L\otimes_F A/[A,A]\cong (L\otimes_{F}A)/[L\otimes_{F}A,L\otimes_{F}A], $$ and hence
$\dim_F A/[A,A]=\dim_L (L\otimes_{F}A)/[L\otimes_{F}A,L\otimes_{F}A]$.
\end{lem}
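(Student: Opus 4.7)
The plan is to realize both $Z(A)$ and $A/[A,A]$ as the kernel and cokernel of explicit $F$-linear maps, and then invoke exactness of the base-change functor $L \otimes_F -$ (which holds automatically since $L$ is flat over the field $F$).

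For the center, fix an $F$-basis $a_1, \ldots, a_m$ of $A$ and consider the $F$-linear map
\begin{equation*}
\Phi: A \to A^{m}, \qquad z \mapsto (a_1z - za_1,\, \ldots,\, a_mz - za_m).
\end{equation*}
An element $z \in A$ lies in $Z(A)$ if and only if it commutes with every $a_i$, so $Z(A) = \ker \Phi$. The same collection $a_1,\ldots,a_m$ is an $L$-basis of $L \otimes_F A$, so under the canonical identification $L \otimes_F A^m \cong (L \otimes_F A)^m$ the base-changed map $\id_L \otimes \Phi$ is precisely the analogue of $\Phi$ for $L \otimes_F A$, whose kernel is $Z(L \otimes_F A)$. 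Since $L \otimes_F -$ preserves kernels, the first claimed isomorphism $L \otimes_F Z(A) \cong Z(L \otimes_F A)$ follows, and it is canonical.

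For the cocenter, one uses the $F$-linear commutator map
\begin{equation*}
\Psi: A \otimes_F A \to A, \qquad a \otimes b \mapsto ab - ba,
\end{equation*}
whose image is $[A,A]$ and whose cokernel is therefore $A/[A,A]$. The natural identification $L \otimes_F (A \otimes_F A) \cong (L \otimes_F A) \otimes_L (L \otimes_F A)$ turns $\id_L \otimes \Psi$ into the commutator map for $L \otimes_F A$, and right exactness of the tensor product yields $L \otimes_F (A/[A,A]) \cong (L \otimes_F A)/[L \otimes_F A,\, L \otimes_F A]$. The dimension equalities are then immediate, since $\dim_L(L \otimes_F V) = \dim_F V$ for any finite-dimensional $F$-vector space $V$. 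The argument is entirely formal: the only point worth verifying is that $\id_L \otimes \Phi$ and $\id_L \otimes \Psi$ really coincide with their intrinsic $L$-analogues, which is obvious once a compatible basis is fixed. There is no substantive obstacle.
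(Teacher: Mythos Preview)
Your proof is correct and follows essentially the same approach as the paper: both realize $Z(A)$ as the solution set of the linear system ``commutes with a basis'' and argue that this is preserved under extension of scalars, and both handle the cocenter via $L\otimes_F[A,A]=[L\otimes_FA,L\otimes_FA]$. Your version is slightly more explicit in naming the maps $\Phi$ and $\Psi$ and phrasing the argument in terms of flatness and exactness, whereas the paper speaks of the rank of the solution space of a linear system defined over $F$ being unchanged under base change, but the content is the same.
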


\begin{proof} It is clear that the canonical morphism $L\otimes_{F}Z(A)\rightarrow Z(L\otimes_{F}A)$ is an injection. Note that $\dim_L Z(L\otimes_{F}A)$ is the same as
the $L$-rank of the solution space of some related system of linear equations (which is defined over $F$). It follows that $\dim_F Z(A)=\dim_K Z(L\otimes_{F}A)$ and hence the previous canonical morphism is an isomorphism.
This proves the first isomorphism of the lemma.

The second isomorphism follows from the fact that $L\otimes_F [A,A]=[L\otimes_F A, L\otimes_F A]$.
\end{proof}

The next lemma gives some bases results on the center and cocenter of the semisimple cyclotomic Schur algebra $S_{n,\K}$, as well as some dimension bound results on the center and cocenter of the cyclotomic Schur algebra $S_{n,K}$ over an arbitrary field $K$.

\begin{lem}\label{Kbasis0} 1) The set \begin{equation}\label{Fblam}\biggl\{\mathcal{F}_\blam:=\sum_{\substack{\bmu\in\mC\\ \bS\in\mT(\blam,\bmu)}}\mathcal{F}_{\bS\bS}/\gamma_{\bS}\biggm|\blam\in\mC^+\biggr\}\end{equation}
is a $\K$-basis of $Z(S_{n,\K} )$;

2) The set $$\bigl\{\mathcal{F}_{\bT^\blam\bT^\blam}+[S_{n,\K} ,S_{n,\K} ]\bigm|\blam\in\mC^+\bigr\}
$$ is a $\K$-basis of the cocenter $\Tr(S_{n,\K}):=S_{n,\K} /[S_{n,\K} ,S_{n,\K} ]$;

3) We have $$
\dim_K Z(S_{n,K} )\geq \#\mC^+ =\dim_{\K} Z(S_{n,\K} ),\quad\,
\dim_K [S_{n,K} ,S_{n,K} ]\leq\dim_{\K}[S_{n,\K} ,S_{n,\K} ].
$$
In particular, $\dim\Tr(S_{n,K})=\dim S_{n,K}/[S_{n,K} ,S_{n,K}])\geq\#\mC^+$.
\end{lem}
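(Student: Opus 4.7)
The plan is to deduce Part 1 from the semisimplicity of $S_{n,\K}$ and Proposition~\ref{Fseminormal}, to prove Part 2 by reducing every seminormal basis element to a scalar multiple of some $\mathcal{F}_{\bT^\blam\bT^\blam}$ modulo commutators and then matching dimensions, and finally to obtain Part 3 by a base-change argument using the integral form $S_{n,\O_K}$.

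For Part 1, the orthogonality $\mathcal{F}_{\bS\bT}\mathcal{F}_{\bU\bV}=\delta_{\bT\bU}\gamma_\bT\mathcal{F}_{\bS\bV}$ of Proposition~\ref{Fseminormal}(2) yields, for any $\bU,\bV\in\mT(\brho,\bnu)$ with $\bnu\in\mC$, the equalities $\mathcal{F}_\blam\mathcal{F}_{\bU\bV}=\mathcal{F}_{\bU\bV}\mathcal{F}_\blam=\delta_{\blam,\brho}\mathcal{F}_{\bU\bV}$. Hence the $\mathcal{F}_\blam$ are pairwise orthogonal central idempotents which sum to $1$. By Proposition~\ref{Fseminormal}(4), $\mathcal{F}_\blam$ acts as the identity on the Weyl module $\Delta_\K^\blam$ and as zero on $\Delta_\K^\brho$ for $\brho\neq\blam$, so the $\mathcal{F}_\blam$ are precisely the central primitive idempotents indexing the simple blocks of the split semisimple algebra $S_{n,\K}$, and therefore form a $\K$-basis of $Z(S_{n,\K})$.

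For Part 2, two commutator computations do the work. Using $\gamma_{\bT^\blam}=1$ from Proposition~\ref{Fseminormal}(2), one finds for any $\bT\in\mT(\blam,\bnu)$ with $\bnu\in\mC$ that $[\mathcal{F}_{\bT^\blam\bT},\mathcal{F}_{\bT\bT^\blam}]=\gamma_\bT\mathcal{F}_{\bT^\blam\bT^\blam}-\mathcal{F}_{\bT\bT}$, so $\mathcal{F}_{\bT\bT}\equiv\gamma_\bT\mathcal{F}_{\bT^\blam\bT^\blam}\pmod{[S_{n,\K},S_{n,\K}]}$. For distinct $\bS,\bT$ of the same shape, a similar computation gives $[\mathcal{F}_{\bT\bT},\mathcal{F}_{\bS\bT}]=-\gamma_\bT\mathcal{F}_{\bS\bT}$, so $\mathcal{F}_{\bS\bT}\in[S_{n,\K},S_{n,\K}]$. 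Combined with the seminormal basis of Proposition~\ref{Fseminormal}(3), this shows that $\{\mathcal{F}_{\bT^\blam\bT^\blam}+[S_{n,\K},S_{n,\K}]\}_{\blam\in\mC^+}$ spans $\Tr(S_{n,\K})$. Since the cocenter of a split semisimple algebra has $\K$-dimension equal to the number of isomorphism classes of simple modules, $\dim_\K\Tr(S_{n,\K})=\#\mC^+$ by Part 1, so the spanning set has the correct cardinality and hence is a $\K$-basis.

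For Part 3, I work with the integral form $S_{n,\O_K}$, a free $\O_K$-module of rank $d:=\dim_\K S_{n,\K}$ satisfying $S_{n,\O_K}\otimes_{\O_K}\K\cong S_{n,\K}$ and $S_{n,\O_K}/xS_{n,\O_K}\cong S_{n,K}$. The center $Z(S_{n,\O_K})$ is the kernel of the $\O_K$-linear map $a\mapsto(b\mapsto[b,a])$, so it commutes with flat base change to $\K$ and its cokernel is torsion-free; it is therefore a direct summand of $S_{n,\O_K}$ of $\O_K$-rank $\#\mC^+$ by Part 1. Tensoring with $K$ yields an injection $Z(S_{n,\O_K})\otimes_{\O_K}K\hookrightarrow S_{n,K}$ whose image is central, giving $\dim_K Z(S_{n,K})\ge \#\mC^+$. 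Dually, $[S_{n,\O_K},S_{n,\O_K}]$ is the image of an $\O_K$-linear map; the surjection $[S_{n,\O_K},S_{n,\O_K}]\otimes_{\O_K}K\twoheadrightarrow[S_{n,K},S_{n,K}]$ together with $\rank_{\O_K}[S_{n,\O_K},S_{n,\O_K}]=\dim_\K[S_{n,\K},S_{n,\K}]$ gives $\dim_K[S_{n,K},S_{n,K}]\le\dim_\K[S_{n,\K},S_{n,\K}]$, and the bound $\dim_K\Tr(S_{n,K})\ge\#\mC^+$ follows by subtracting from $\dim_K S_{n,K}=d$. The main obstacle is exactly this base-change step: kernels commute with flat base change while images only map surjectively, and this asymmetry is precisely what produces the opposite inequalities for $Z$ and $[\cdot,\cdot]$.
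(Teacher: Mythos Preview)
Your proof is correct and follows essentially the same approach as the paper: Parts 1 and 2 use the seminormal basis and commutator identities in the split semisimple algebra $S_{n,\K}$, and Part 3 uses the integral form $S_{n,\O_K}$ together with the semicontinuity of kernel and image dimensions under specialization. The only cosmetic difference is that the paper phrases the center inequality via the rank of the defining linear system over $\O_K$ rather than via flat base change and the direct-summand property, and it records an explicit basis of $[S_{n,\K},S_{n,\K}]$ along the way, but these are the same argument in different clothing.
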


\begin{proof} Since $S_{n,\K} $ is a split semisimple $\K$-algebra and has a seminormal basis $$\{\mathcal{F}_{\bS\bT}|\bS\in\mT(\blam,\bmu),\bT\in\mT(\blam,\bnu),\blam\in\mC^+, \bmu,\bnu\in\mC\}, $$ we have $$\begin{aligned}
& \mathcal{F}_{\bS\bT}(\mathcal{F}_{\bT\bT}/\gamma_\bT)-(\mathcal{F}_{\bT\bT}/\gamma_\bT) \mathcal{F}_{\bS\bT}=\mathcal{F}_{\bS\bT}-\delta_{\bS,\bT}\mathcal{F}_{\bT\bT},\\
& (\mathcal{F}_{\bT\bT}/\gamma_\bT)-(\mathcal{F}_{\bT^\blam\bT^\blam}/\gamma_{\bT^\blam})=(\mathcal{F}_{\bT\bT^\blam}/\gamma_\bT)(\mathcal{F}_{\bT^\blam\bT}/\gamma_{\bT^\blam})
-(\mathcal{F}_{\bT^\blam\bT}/\gamma_{\bT^\blam})(\mathcal{F}_{\bT\bT^\blam})/\gamma_{\bT}).
\end{aligned}
$$
It follows that (\ref{Fblam}) is a $\K$-basis of $Z(S_{n,\K} )$, and \begin{equation}\label{Kcommutator0}
\Biggl\{\mathcal{F}_{\bS\bT},(\mathcal{F}_{\bU\bU}/\gamma_\bU)-(\mathcal{F}_{\bT^\blam\bT^\blam}/\gamma_{\bT^\blam})\Biggm|\begin{matrix}\text{$\bU\in\mT(\blam,\bmu), \bS\in\mT(\brho,\balpha),\bT\in\mT(\brho,\bbeta),\bU\neq \bT^\blam, \bS\neq\bT$},\\
\text{$\blam,\brho\in\mC^+, \bmu,\balpha,\bbeta\in\mC$}\end{matrix}\Biggr\}
\end{equation}
is a $\K$-basis of $[S_{n,\K} ,S_{n,\K} ]$ and part 2 follows. In particular, $\dim_{\K} Z(S_{n,\K} )=\#\mC^+$.

Note that $K\otimes_{{\O_K}}S_{n,{\O_K}} \cong S_{n,K} $. We have $\dim_K Z(S_{n,K} )$ is the same as the $K$-rank of the solution space of some related system of linear equations (which is defined over ${\O_K}\subset\K$). It follows that $\dim_K Z(S_{n,K} )\geq\dim_{\K} Z(S_{n,\K} )=\#\mC^+$ as required. The final inequality follows from following inequality: $$
\dim_{K}[S_{n,K} ,S_{n,K} ]\leq\rank_{{\O_K}}[S_{n,{\O_K}} ,S_{n,{\O_K}} ]=\dim_{\K}[S_{n,\K} ,S_{n,\K} ].
$$
This completes the proof of the lemma.
\end{proof}

The following theorem is the third main result of this paper.

\begin{thm}\label{mainthm33} Let $\O\in\{K,\O_K,\K\}$. The cocenter $S_{n,\O} /[S_{n,\O} ,S_{n,\O} ]$ of $S_{n,\O} $ is a free $\O$-module and the following set \begin{equation}\label{Ogenerators}
\bigl\{\varphi_{\bT^\blam\bT^\blam}^{\O}+[S_{n,\O} ,S_{n,\O} ]\bigm|\blam\in\mC^+\bigr\}
\end{equation}
forms an $\O$-basis of the cocenter $\Tr(S_{n,\O})=S_{n,\O}/[S_{n,\O} ,S_{n,\O} ]$. In particular, $$
\dim_K \Tr(S_{n,K})=\dim S_{n,K} /[S_{n,K} ,S_{n,K} ]=\#\mC^+ .
$$
\end{thm}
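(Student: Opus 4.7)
\medskip\noindent\textbf{Proof plan.}\quad The plan is to treat $\O=\K$, $\O=\O_K$, and $\O=K$ in succession, so that the seminormal basis over $\K$ fixes the expected dimension, the DVR $\O_K$ bridges the two worlds, and the cellular structure of $S_{n,R}$ drives a uniform spanning argument.

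First, I would verify the result over $\K$ by combining Corollary~\ref{Tlamcor} with Lemma~\ref{Kbasis0}(2). The seminormal multiplication rule of Proposition~\ref{Fseminormal}(2) shows that each off-diagonal $\mathcal{F}_{\bS\bT}$ with $\bS\neq\bT$ lies in $[S_{n,\K},S_{n,\K}]$ via the identity $\gamma_\bT\mathcal{F}_{\bS\bT}=\mathcal{F}_{\bS\bT}\mathcal{F}_{\bT\bT}-\mathcal{F}_{\bT\bT}\mathcal{F}_{\bS\bT}$, and each diagonal $\mathcal{F}_{\bS\bS}$ with $\bS\in\mT(\brho,\bmu)$ is congruent modulo the commutator to $(\gamma_\bS/\gamma_{\bT^\brho})\mathcal{F}_{\bT^\brho\bT^\brho}$. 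Substituting these reductions into Corollary~\ref{Tlamcor} and using $\gamma_{\bT^\blam}=1$ produces a unitriangular relation
\[
\varphi_{\bT^\blam\bT^\blam}^{\K}\equiv\mathcal{F}_{\bT^\blam\bT^\blam}+\sum_{\brho\rhd\blam}c_\blam^\brho\mathcal{F}_{\bT^\brho\bT^\brho}\pmod{[S_{n,\K},S_{n,\K}]},
\]
which converts the basis of Lemma~\ref{Kbasis0}(2) into the proposed $\K$-basis $\{[\varphi_{\bT^\blam\bT^\blam}^{\K}]\mid\blam\in\mC^+\}$ of $\Tr(S_{n,\K})$.

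Next, over $\O_K$ the spanning step proceeds by downward induction on $\blam\in\mC^+$ with respect to $\unrhd$: I aim to show that every cellular basis element $\varphi^{\O_K}_{\bS\bT}$ with $\bS\in\mT(\blam,\bmu)$, $\bT\in\mT(\blam,\bnu)$ is congruent modulo $[S_{n,\O_K},S_{n,\O_K}]$ to an $\O_K$-linear combination of $\{\varphi_{\bT^\brho\bT^\brho}^{\O_K}\mid\brho\unrhd\blam\}$. The cellular relation~\eqref{bilinear1} together with the normalization $\<\varphi_{\bT^\blam},\varphi_{\bT^\blam}\>=1$ of~\eqref{bilinear2} yields $\varphi_{\bS\bT^\blam}^{\O_K}\varphi_{\bT^\blam\bT}^{\O_K}\equiv\varphi_{\bS\bT}^{\O_K}\pmod{S_{n,\O_K}^{\rhd\blam}}$; modulo the commutator this equals the reversed product $\varphi_{\bT^\blam\bT}^{\O_K}\varphi_{\bS\bT^\blam}^{\O_K}$. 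When $\bmu\neq\bnu$ the support of the $\varphi$'s on $\bigoplus_{\brho\in\mC}\fm_\brho^{\O_K}\HH_{n,\O_K}$ forces the reversed product to vanish; when $\bmu=\bnu$ a second application of~\eqref{bilinear1} reduces it to $\<\bT,\bS\>\varphi_{\bT^\blam\bT^\blam}^{\O_K}$ modulo $S_{n,\O_K}^{\rhd\blam}$; in either case the inductive hypothesis absorbs the residual $S_{n,\O_K}^{\rhd\blam}$-terms. For linear independence I would tensor with $\K$: since commutators commute with the flat base change $\O_K\hookrightarrow\K$, we have $\Tr(S_{n,\O_K})\otimes_{\O_K}\K=\Tr(S_{n,\K})$, and any $\O_K$-relation between the $[\varphi_{\bT^\blam\bT^\blam}^{\O_K}]$ would lift to a $\K$-relation, contradicting the first stage. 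Hence $\Tr(S_{n,\O_K})$ is $\O_K$-free of rank $\#\mC^+$ with the proposed basis; the case $\O=K$ then follows by base change via the identity $\Tr(S_{n,K})=\Tr(S_{n,\O_K})\otimes_{\O_K}K$.

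The main obstacle is the inductive spanning step over $\O_K$: the case split on whether the types $\bmu$ and $\bnu$ agree is essential, because~\eqref{bilinear1} only applies when the two middle tableaux share a common type, so for $\bmu\neq\bnu$ the reduction must flow through the vanishing of the reversed product $\varphi^{\O_K}_{\bT^\blam\bT}\varphi^{\O_K}_{\bS\bT^\blam}$---a phenomenon specific to the Schur-algebra decomposition $\bigoplus_{\bmu\in\mC}\fm_\bmu^{\O_K}\HH_{n,\O_K}$ and without a direct analogue inside the Hecke algebra.
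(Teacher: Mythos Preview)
Your proof is correct and follows essentially the same route as the paper: the spanning step over $\O_K$ via the cellular bilinear form~\eqref{bilinear1} together with $\<\varphi_{\bT^\blam},\varphi_{\bT^\blam}\>=1$ and downward induction on $\unrhd$, combined with the semisimple dimension count over $\K$, is exactly the paper's argument (the paper merely orders the three rings as $\O_K\to K\to\O_K\to\K$ rather than your $\K\to\O_K\to K$, using the lower bound of Lemma~\ref{Kbasis0}(3) instead of your direct $\K$-computation).

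One remark on your ``main obstacle'': the case split $\bmu=\bnu$ versus $\bmu\neq\bnu$ is not actually needed. The bilinear form $\<\cdot,\cdot\>$ on the cell module $\Delta^\blam$ is defined for \emph{all} pairs $\varphi_\bS,\varphi_\bT$ regardless of type, and your observation that the reversed product $\varphi_{\bT^\blam\bT}^{\O_K}\varphi_{\bS\bT^\blam}^{\O_K}$ vanishes when $\bmu\neq\bnu$ is precisely the statement that $\<\varphi_\bT,\varphi_\bS\>=0$ in that case; so the paper writes the single line $\varphi_{\bT^\blam\bT}^{\O_K}\varphi_{\bS\bT^\blam}^{\O_K}\equiv\<\varphi_\bT,\varphi_\bS\>\varphi_{\bT^\blam\bT^\blam}^{\O_K}\pmod{(S_{n,\O_K})^{\rhd\blam}}$ covering both cases at once.
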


\begin{proof} For any $\blam\in\mC^+, \bmu,\bnu\in\mC, \bS\in\mathcal{T}_0(\blam,\bmu), \bT\in\mathcal{T}_0(\blam,\bnu)$, we have (as $\<\varphi^{\O_K}_{\bT^\blam},\varphi^{\O_K}_{\bT^\blam}\>=1$ by \eqref{bilinear2}) that $$
\varphi^{\O_K}_{\bS\bT}\equiv\varphi^{\O_K}_{\bS\bT^\blam}\varphi^{\O_K}_{\bT^\blam\bT}\,\,\pmod{\bigl(S_{n,{\O_K}} \bigr)^{\rhd\blam}},
$$
It follows that $$
\varphi^{\O_K}_{\bS\bT}\equiv\varphi^{\O_K}_{\bS\bT^\blam}\varphi_{\bT^\blam\bT}^{\O_K}\equiv \varphi^{\O_K}_{\bT^\blam\bT}\varphi^{\O_K}_{\bS\bT^\blam}
\equiv\<\varphi^{\O_K}_{\bT},\varphi^{\O_K}_{\bS}\>\varphi^{\O_K}_{\bT^\blam\bT^\blam}\,\,\pmod{\bigl(S_{n,{\O_K}} \bigr)^{\rhd\blam}+[S_{n,{\O_K}},S_{n,{\O_K}}]}.
$$
By a downward induction on dominance order, we can deduce that \begin{equation}\label{expansion1}
\varphi^{\O_K}_{\bS\bT}\equiv\sum_{\brho\unrhd\blam}a_{\bS,\bT}^{\brho}\varphi^{\O_K}_{\bT^\brho\bT^\brho}\pmod{[S_{n,{\O_K}},S_{n,{\O_K}}]},
\end{equation}
where $a_{\bS,\bT}^{\brho}\in{\O_K}$ for each $\brho$. This proves that (\ref{Ogenerators}) is a set of ${\O_K}$-linear generator of the cocenter $\Tr(S_{n,\O_K})=S_{n,{\O_K}} /[S_{n,{\O_K}} ,S_{n,{\O_K}} ]$. As a result, \begin{equation}\label{Ogenerators2}
\bigl\{\varphi_{\bT^\blam\bT^\blam}^K+[S_{n,K} ,S_{n,K} ]\bigm|\blam\in\mC^+\bigr\}
\end{equation}
is a set of $K$-linear generator of the cocenter $\Tr(S_{n,K})=S_{n,K} /[S_{n,K} ,S_{n,K} ]$.

On the other hand, by Lemma \ref{Kbasis0} 3), we know that
$\dim_K\Tr(S_{n,K})=\dim_K S_{n,K}/[S_{n,K} ,S_{n,K} ]\geq\#\mC^+$. Thus (\ref{Ogenerators2}) must be a $K$-basis of the cocenter $\Tr(S_{n,K})$.
Furthermore, we can deduce that (\ref{Ogenerators}) (as the preimage of (\ref{Ogenerators2})) must be ${\O_K}$-linear independent and hence it forms an ${\O_K}$-basis of the cocenter $\Tr(S_{n,\O_K})=S_{n,{\O_K}} /[S_{n,{\O_K}} ,S_{n,{\O_K}} ]$.

The remaining part of the theorem follows from the natural isomorphism $\K\otimes_{\O_K}S_{n,\O_K} /[S_{n,\O_K} ,S_{n,\O_K} ]\cong S_{n,\K} /[S_{n,\K} ,S_{n,\K} ]$. This completes the proof of the theorem.
\end{proof}

\begin{cor}\label{maincor1} Both the canonical map $K\otimes_{{\O_K}}[S_{n,{\O_K}} ,S_{n,{\O_K}} ]\rightarrow [S_{n,K} ,S_{n,K} ]$ and the canonical map
$K\otimes_{{\O_K}}S_{n,{\O_K}} /[S_{n,{\O_K}} ,S_{n,{\O_K}} ]\rightarrow S_{n,K} /[S_{n,K} ,S_{n,K} ]$ are isomorphisms. In particular, $[S_{n,{\O_K}} ,S_{n,{\O_K}} ]$ is a pure ${\O_K}$-submodule of $S_{n,{\O_K}} $.
\end{cor}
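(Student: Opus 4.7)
The plan is to deduce the corollary as a direct consequence of Theorem \ref{mainthm33}, exploiting the flatness afforded by freeness of the cocenter. Start from the tautological short exact sequence of $\O_K$-modules
$$0 \to [S_{n,\O_K},S_{n,\O_K}] \to S_{n,\O_K} \to \Tr(S_{n,\O_K}) \to 0.$$
By Theorem \ref{mainthm33}, $\Tr(S_{n,\O_K})$ is a free $\O_K$-module (of rank $\#\mC^+$), hence flat. So applying $K\otimes_{\O_K}-$ preserves exactness and we obtain
$$0 \to K\otimes_{\O_K}[S_{n,\O_K},S_{n,\O_K}] \to K\otimes_{\O_K} S_{n,\O_K} \to K\otimes_{\O_K}\Tr(S_{n,\O_K}) \to 0.$$
Under the canonical identification $K\otimes_{\O_K}S_{n,\O_K}\cong S_{n,K}$, the leftmost map realizes $K\otimes_{\O_K}[S_{n,\O_K},S_{n,\O_K}]$ as a submodule of $S_{n,K}$.

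Next I would identify this image with $[S_{n,K},S_{n,K}]$. The containment $K\otimes_{\O_K}[S_{n,\O_K},S_{n,\O_K}]\hookrightarrow S_{n,K}$ lands inside $[S_{n,K},S_{n,K}]$ since it is generated (over $K$) by commutators in $S_{n,\O_K}\subseteq S_{n,K}$. Conversely, any commutator $[x,y]$ in $S_{n,K}$ can be written by expanding $x=\sum a_i\otimes x_i$ and $y=\sum b_j\otimes y_j$ with $a_i,b_j\in K$ and $x_i,y_j\in S_{n,\O_K}$, giving $[x,y]=\sum a_ib_j\otimes[x_i,y_j]$, which lies in the image. Hence the two submodules of $S_{n,K}$ agree, and the induced map
$$K\otimes_{\O_K}[S_{n,\O_K},S_{n,\O_K}] \xrightarrow{\sim} [S_{n,K},S_{n,K}]$$
is an isomorphism. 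Taking cokernels in the exact sequence above then yields the second stated isomorphism
$$K\otimes_{\O_K}\Tr(S_{n,\O_K}) \xrightarrow{\sim} \Tr(S_{n,K}).$$

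Finally, purity of $[S_{n,\O_K},S_{n,\O_K}]$ in $S_{n,\O_K}$ is immediate: $\O_K$ is a discrete valuation ring, so a submodule is pure if and only if the quotient is torsion-free, and here the quotient $\Tr(S_{n,\O_K})$ is $\O_K$-free by Theorem \ref{mainthm33}. No real obstacle is anticipated; the whole argument is a formal consequence of the freeness of the cocenter that was established in the main theorem, so the only care needed is in verifying the image identification in step two, which is an elementary bilinearity computation.
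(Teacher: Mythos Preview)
Your proof is correct and uses essentially the same input as the paper's proof---namely, the freeness of $\Tr(S_{n,\O_K})$ from Theorem~\ref{mainthm33}---but the organization differs slightly. The paper first shows that the map on cocenters is surjective, then uses the dimension equality $\dim_K\Tr(S_{n,K})=\#\mC^+=\rank_{\O_K}\Tr(S_{n,\O_K})$ from Theorem~\ref{mainthm33} to conclude it is an isomorphism; it then deduces purity from freeness, uses purity to get injectivity of the commutator map, and finishes with another dimension count. You instead run the short exact sequence argument directly: freeness of the cocenter gives $\mathrm{Tor}_1^{\O_K}(K,\Tr(S_{n,\O_K}))=0$ (equivalently, the sequence splits), so $K\otimes_{\O_K}[S_{n,\O_K},S_{n,\O_K}]$ injects into $S_{n,K}$, and you identify the image with $[S_{n,K},S_{n,K}]$ by the elementary bilinearity computation (which in fact is immediate here since $S_{n,\O_K}\twoheadrightarrow S_{n,K}$ is surjective). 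Your route avoids the explicit dimension comparisons and is marginally cleaner; the paper's route is perhaps more hands-on. One small remark: the phrase ``hence flat, so applying $K\otimes_{\O_K}-$ preserves exactness'' is slightly imprecise---it is not the flatness of $K$ but the vanishing of $\mathrm{Tor}_1^{\O_K}(K,\Tr(S_{n,\O_K}))$ (guaranteed by freeness of the quotient) that gives left-exactness here.
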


\begin{proof} Since the canonical map $K\otimes_{{\O_K}}[S_{n,{\O_K}},S_{n,{\O_K}}] \to K\otimes_{{\O_K}}S_{n,{\O_K}} \xrightarrow{\sim} S_{n,K} $ factors through $[S_{n,K},S_{n,K}]$, it follows that the canonical map
$K\otimes_{{\O_K}}S_{n,{\O_K}} /[S_{n,{\O_K}} ,S_{n,{\O_K}} ]\rightarrow S_{n,K} /[S_{n,K} ,S_{n,K} ]$ is surjective. Now using Theorem \ref{mainthm33} and comparing dimensions we see that this surjection is an isomorphism.

By Theorem \ref{mainthm33}, $S_{n,{\O_K}} /[S_{n,{\O_K}} ,S_{n,{\O_K}} ]$ is a free ${\O_K}$-module which implies that $[S_{n,{\O_K}} ,S_{n,{\O_K}} ]$ is a pure ${\O_K}$-submodule of $S_{n,{\O_K}} $. This in turn implies that the canonical map $K\otimes_{{\O_K}}[S_{n,{\O_K}} ,S_{n,{\O_K}} ]\rightarrow [S_{n,K} ,S_{n,K} ]$ is an injection. Finally, we see this injection is an isomorphism by comparing their dimensions on both sides.
\end{proof}


Let $\O\in\{K,\O_K,\K\}$. For any $\bS\in\mathcal{T}_0(\blam,\bmu),\bT\in\mathcal{T}_0(\blam,\bnu)$ with $(\bS,\bT)\neq(\bT^\blam,\bT^\blam)$,  there exist (by (\ref{expansion1})) unique scalars $\{a_{\bS\bT}^\bbeta\in\O|\blam\unlhd\bbeta\in\mC^+\}$, such that  $$
\varphi^\O_{\bS\bT}\equiv \sum_{\bbeta\unrhd\blam}a_{\bS\bT}^\bbeta\varphi_{\bT^\bbeta\bT^\bbeta}^\O\,\,\pmod{[S_{n,\O} ,S_{n,\O} ]} . $$
Moreover, it is easy to see that $$
a_{\bS\bT}^\blam=\<\varphi^\O_\bS,\varphi^\O_\bT\> .
$$

\begin{cor}\label{keycor3} Let $\O\in\{K,\O_K,\K\}$. With the notations as above, the free $\O$-module $[S_{n,\O} ,S_{n,\O} ]$ has an $\O$-basis of the form \begin{equation}\label{commuspan}
\Biggl\{\varphi^\O_{\bS\bT}-\sum_{\brho\unrhd\blam}a_{\bS\bT}^\brho\varphi^\O_{\bT^\brho\bT^\brho}\Biggm|\begin{matrix}\text{$\bS\in\mathcal{T}_0(\blam,\bmu),
\bT\in\mathcal{T}_0(\blam,\bnu), (\bS,\bT)\neq(\bT^\blam,\bT^\blam)$}\\  \text{$\blam\in\mC^+, \bmu,\bnu\in\mC$}
\end{matrix}\Biggr\}.
\end{equation}
\end{cor}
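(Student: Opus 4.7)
The strategy is to realize the set (\ref{commuspan}) as the ``off-diagonal'' part of a unitriangular change of the semistandard basis, and then deduce it is a basis of $[S_{n,\O},S_{n,\O}]$ by invoking Theorem \ref{mainthm33}. First I would note that each listed element lies in $[S_{n,\O},S_{n,\O}]$: this is immediate from (\ref{expansion1}), which is exactly the statement that $\varphi^\O_{\bS\bT}-\sum_{\brho\unrhd\blam}a^\brho_{\bS\bT}\varphi^\O_{\bT^\brho\bT^\brho}\equiv 0$ modulo $[S_{n,\O},S_{n,\O}]$, with the coefficients $a^\brho_{\bS\bT}\in\O$ determined uniquely.

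Next, consider the ``modified'' basis of $S_{n,\O}$ obtained from the cellular semistandard basis $\{\varphi^\O_{\bS\bT}\}$ by replacing each $\varphi^\O_{\bS\bT}$ with $(\bS,\bT)\neq(\bT^\blam,\bT^\blam)$ by the corresponding element of (\ref{commuspan}), and keeping the ``diagonal'' elements $\varphi^\O_{\bT^\blam\bT^\blam}$ for $\blam\in\mC^+$ unchanged. Ordering the tableau pairs so that $(\bT^\brho,\bT^\brho)$ with $\brho\unrhd\blam$ comes before any other pair of shape $\blam$, the transition matrix is unitriangular with $1$'s on the diagonal, hence invertible over $\O$. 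Therefore the modified set is again an $\O$-basis of $S_{n,\O}$.

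Now I would conclude by a dimension-count / direct-sum argument. By Theorem \ref{mainthm33}, the images of $\{\varphi^\O_{\bT^\blam\bT^\blam}\mid\blam\in\mC^+\}$ form a free $\O$-basis of the cocenter $\Tr(S_{n,\O})$; equivalently, the $\O$-submodule $M$ they span is a free complement of $[S_{n,\O},S_{n,\O}]$ in $S_{n,\O}$. Since the modified set is an $\O$-basis of $S_{n,\O}=M\oplus[S_{n,\O},S_{n,\O}]$ split according to the diagonal/off-diagonal parts, and the off-diagonal part lies entirely in $[S_{n,\O},S_{n,\O}]$, it must be an $\O$-basis of $[S_{n,\O},S_{n,\O}]$. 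This simultaneously shows $[S_{n,\O},S_{n,\O}]$ is free over $\O$, which for $\O=\O_K$ is consistent with Corollary \ref{maincor1}.

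The step I expect to require the most care is justifying that the transition between the semistandard basis and the modified set is genuinely unitriangular over $\O$, i.e.\ that the sum in (\ref{commuspan}) only involves diagonal terms indexed by $\brho\unrhd\blam$ (not arbitrary shapes) so that the change of basis respects a triangular order; this is the content of (\ref{expansion1}) and ultimately rests on the cellularity of $S_{n,\O}$ together with the fact that $\langle\varphi^{\O_K}_{\bT^\blam},\varphi^{\O_K}_{\bT^\blam}\rangle=1$ from (\ref{bilinear2}). Once this is in place, the rest is formal.
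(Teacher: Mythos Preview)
Your argument is correct and close in spirit to the paper's, but organized a bit differently. The paper first treats the field cases $\O\in\{K,\K\}$: it observes (as you do) that every element of (\ref{commuspan}) lies in $[S_{n,\O},S_{n,\O}]$, notes these elements are linearly independent (implicitly from the same unitriangularity you spell out), and then concludes by a dimension count against $\dim[S_{n,\O},S_{n,\O}]=\dim S_{n,\O}-\#\mC^+$ coming from Theorem~\ref{mainthm33} and Corollary~\ref{maincor1}. For $\O=\O_K$ the paper then invokes Nakayama's lemma separately. Your route sidesteps the case split: once Theorem~\ref{mainthm33} gives that the images of $\{\varphi^\O_{\bT^\blam\bT^\blam}\}$ are a free $\O$-basis of the cocenter, you immediately get the internal direct sum $S_{n,\O}=M\oplus[S_{n,\O},S_{n,\O}]$, and the unitriangular change of basis then peels off the off-diagonal part as an $\O$-basis of $[S_{n,\O},S_{n,\O}]$ uniformly for all three rings. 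The gain is that no separate Nakayama argument is needed for $\O_K$; the cost is that you must check the ``equivalently'' step (that $M$ really is a complement), which is straightforward since the basis statement in Theorem~\ref{mainthm33} encodes both $M+[S_{n,\O},S_{n,\O}]=S_{n,\O}$ and $M\cap[S_{n,\O},S_{n,\O}]=0$. One small remark on your triangularity paragraph: the ordering condition ``$\brho\unrhd\blam$'' is not actually needed---it suffices to place all diagonal pairs $(\bT^\brho,\bT^\brho)$ ahead of all non-diagonal pairs, since each correction in (\ref{commuspan}) involves only diagonal basis vectors and the coefficient of $\varphi^\O_{\bS\bT}$ itself is $1$.
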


\begin{proof} Assume $\O\in\{K,\K\}$. Using Theorem \ref{mainthm33}, it is  easy to see that (\ref{commuspan}) is a subset of $[S_{n,\O} ,S_{n,\O} ]$ and its elements are $\O$-linearly independent. Hence they must form a basis by dimensions comparison and Corollary \ref{maincor1}.

As a consequence, we can use Nakayama's lemma to deduce that the elements in (\ref{commuspan}) form an $\O_K$-basis of $[S_{n,\O_K} ,S_{n,\O_K} ]$ when $\O=\O_K$.
\end{proof}

\bigskip

\end{document}